\documentclass[a4paper,12pt,reqno]{amsart}  

\usepackage{amsmath}
\usepackage{tikz-cd}
\usepackage{amssymb}
\usepackage{amsthm}
\usepackage{amsfonts}
\usepackage{mathtools}
\usepackage{comment}
\usepackage{mathrsfs}
\usepackage{pifont}
\usepackage{cite}
\usepackage{enumerate}
\usepackage{here}
\usepackage{autobreak}
\usepackage{wrapfig}
\usepackage{graphicx}
\usepackage{lscape}
\usepackage[colorlinks]{hyperref}  
\usepackage{xcolor}
\hypersetup{
	bookmarksnumbered=true,
    colorlinks=true,
    citecolor=blue,
    linkcolor=purple,
    urlcolor=orange,
}
\usepackage{mleftright} 
\usepackage{extpfeil} 
\usepackage{tikz}
\usetikzlibrary{arrows} 
\usetikzlibrary{cd, decorations.pathmorphing} 

\usepackage{pgfplots}
\pgfplotsset{compat=1.18}

\usepackage[capitalize, nameinlink]{cleveref}

\everymath{\displaystyle}
\numberwithin{equation}{section}

\theoremstyle{plain}

\newtheorem{thm}{Theorem}[section]
\crefname{thm}{Theorem}{Theorems}

\newtheorem{lem}[thm]{Lemma}
\crefname{lem}{Lemma}{Lemmas}
\newtheorem{prop}[thm]{Proposition}
\crefname{prop}{Proposition}{Propositions}
\newtheorem{cor}[thm]{Corollary}
\crefname{cor}{Corollary}{Corollaries}

\newtheorem*{claim*}{Claim}
\newtheorem*{thm*}{Theorem}

\newtheorem{introthm}{Theorem}[section]

\crefname{introthm}{Theorem}{Theorems}
\crefname{introprop}{Proposition}{Propositions}
\crefname{introcor}{Corollary}{Corollaries}

\theoremstyle{definition}

\newtheorem{dfn}[thm]{Definition}
\crefname{dfn}{Definition}{Definitions}

\newtheorem{setup}[thm]{Setup}
\crefname{setup}{Setup}{Setups}
\newtheorem{eg}[thm]{Example}
\crefname{eg}{Example}{Examples}
\newtheorem*{Ack}{Acknowledgement}
\newtheorem*{NoCon}{Notation and Conventions}
\newtheorem*{Out}{Outline of the paper}

\newtheorem*{Rels}{Related Works}
\newtheorem*{AI}{AI Usage}

\theoremstyle{remark}
\newtheorem{rem}[thm]{Remark}
\crefname{rem}{Remark}{Remarks}

\usepackage{enumitem} 

\DeclareMathOperator{\Ext}{Ext}
\DeclareMathOperator{\ext}{ext}
\DeclareMathOperator{\Pic}{Pic}
\DeclareMathOperator{\Aut}{Aut}
\DeclareMathOperator{\Auteq}{Auteq}

\DeclareMathOperator{\Stab}{Stab}

\DeclareMathOperator{\ord}{ord}

\DeclareMathOperator{\Ker}{Ker}
\DeclareMathOperator{\ch}{ch}
\DeclareMathOperator{\td}{td}

\DeclareMathOperator{\id}{id}

\DeclareMathOperator{\GL}{GL}
\DeclareMathOperator{\SL}{SL}

\DeclareMathOperator{\rad}{rad}
\DeclareMathOperator{\End}{End}
\DeclareMathOperator{\Supp}{Supp}
\DeclareMathOperator{\Tr}{Tr}
\DeclareMathOperator{\sTr}{sTr}

\DeclareMathOperator{\MH}{MH}

\DeclareMathOperator{\Alb}{Alb}
\DeclareMathOperator{\Gal}{Gal}

\newcommand\Hom{\mathop{\mathrm{Hom}}\nolimits}

\newcommand{\PP}[1]{\mathbb{P}^{#1}}
\newcommand{\thcl}[1]{\langle{#1}\rangle_{\mathrm{th}}}
\newcommand{\abs}[1]{\lvert{#1}\rvert}
\newcommand{\norm}[1]{\lVert{#1}\rVert}
\newcommand{\lderived}{\mathbf{L}}
\newcommand{\rderived}{\mathbf{R}}
\newcommand{\GLt}{\widetilde{\GL}^{+}(2,\RB)}

\newcommand\dual{\raise0.9ex\hbox{$\scriptscriptstyle\vee$}}

\newcommand{\htop}{h_{\mathrm{top}}}
\newcommand{\hcat}{h_{\mathrm{cat}}}
\newcommand{\numg}[1]{{#1}^{N}}

\newcommand{\vtime}{N}
\newcommand{\tensfunc}{\mathsf{M}}

\newcommand{\CB}{\mathbb{C}}

\newcommand{\RB}{\mathbb{R}}

\newcommand{\ZZ}{\mathbb{Z}}

\newcommand{\AC}{\mathcal{A}}

\newcommand{\GC}{\mathcal{G}}

\newcommand{\KC}{\mathcal{K}}

\newcommand{\PC}{\mathcal{P}}

\newcommand{\OO}{\mathscr{O}}

\author{Tomoki Yoshida}
\address[T.Yoshida]{Department~of~Mathematics, School~of~Science~and~Engineering, Waseda~University, Ohkubo~3-4-1, Shinjuku, Tokyo~169-8555, Japan}
\email{\href{mailto:tomoki_y@asagi.waseda.jp}{tomoki\_y@asagi.waseda.jp}}

\title[Orbits in stability manifolds and categorical entropy]{Orbits in Stability Manifolds and Categorical Entropy: Applications to Varieties with Finite Albanese Morphisms}
\date{September 22, 2026}
\keywords{Derived category, categorical entropy, Bridgeland stability condition, abelian variety, Albanese morphism.}
\subjclass[2020]{14F08 (primary), 18G80, 14K05, 37B40 (secondary).}

\usepackage[all]{xy} 
\usepackage{etoolbox} 

\AtBeginEnvironment{lem}{\crefalias{thm}{lem}}
\AtBeginEnvironment{prop}{\crefalias{thm}{prop}}
\AtBeginEnvironment{cor}{\crefalias{thm}{cor}}
\AtBeginEnvironment{dfn}{\crefalias{thm}{dfn}}
\AtBeginEnvironment{eg}{\crefalias{thm}{eg}}
\AtBeginEnvironment{rem}{\crefalias{thm}{rem}}
\AtBeginEnvironment{introprop}{\crefalias{introthm}{introprop}}
\AtBeginEnvironment{introcor}{\crefalias{introthm}{introcor}}

\begin{document}

\begin{abstract}
This paper proves the Gromov--Yomdin property for smooth projective varieties with finite Albanese morphisms.
Our approach is to study the orbits of stability conditions under the action of autoequivalences on Bridgeland stability manifolds.
For a finite generating set of the derived category, we consider the locus on which all its objects are semistable.
Using this locus, we formulate an \emph{orbit escape principle}, which provides a sufficient condition for an autoequivalence to satisfy the Gromov--Yomdin equality.
Together with the stability of simple semihomogeneous bundles with respect to arbitrary numerical stability conditions, this principle readily yields the result for abelian varieties.
We then reduce the case of varieties with finite Albanese morphisms to the abelian case by passing to suitable finite \'etale covers.

Finally, we construct autoequivalences with positive categorical entropy on every abelian variety and show that a smooth projective variety with a finite Albanese morphism admits such an autoequivalence if and only if it is not of general type.
\end{abstract}

\maketitle

\setcounter{tocdepth}{2} 
\tableofcontents

\setcounter{section}{-1} 
\section{Introduction}\label{section: introduction}

Let $X$ be a smooth projective variety over $\CB$ and $\Auteq(X)$ be the group of exact autoequivalences of the bounded derived category $D^b(X)$ of coherent sheaves on $X$.
This group contains the subgroup of \emph{standard autoequivalences}
\[
A(X)\coloneqq \Pic(X)\rtimes\Aut(X)\times\ZZ[1],
\]
and may have a much richer structure than $\Aut(X)$.
In this paper, we study $\Auteq(X)$ from the point of view of dynamics.

The \emph{topological entropy} $\htop(f)$ of a dynamical system $(X,f)$ measures the complexity of the iterates of $f$.
It plays an important role in the study of automorphisms of infinite order.
In \cite{dimitrov_haiden_katzarkov_kontsevich_2014_dynamical_systems_and_categories}, Dimitrov, Haiden, Katzarkov, and Kontsevich introduced \emph{categorical entropy} as an analogue of topological entropy for autoequivalences of triangulated categories.
We write $\hcat(\Phi)$ for the categorical entropy of $\Phi\in\Auteq(X)$ (see \cref{definition: categorical complexity and entropy}).
Subsequent studies have shown that categorical entropy shares several properties with topological entropy, while also exhibiting important differences.

One of the fundamental results concerning topological entropy is the \emph{Gromov--Yomdin equality} \cite{gromov_1987_entropy_homology_and_semialgebraic_geometry,gromov_2003_on_the_entropy_of_holomorphic_maps,yomdin_1987_volume_growth_and_entropy}.
For every $f\in\Aut(X)$, it asserts that
\[
\htop(f) = \log\rho(f^*\rvert_{\oplus_{p=0}^{\dim X}H^{p,p}(X)}),
\]
where $\rho(-)$ denotes the spectral radius.
It is natural to ask whether an analogous formula holds for categorical entropy.

For $\Phi\in\Auteq(X)$, we denote by $\numg{\Phi}$ the induced action on the numerical Grothendieck group $N(X)_{\RB}$ (see \cref{subsection: numerical actions}).
We say that $X$ has the \emph{Gromov--Yomdin (GY) property} if the following condition holds:
\begin{enumerate}[label={\textbf{(GY)}}, ref={(GY)}]
    \item \label{property: Gromov-Yomdin property}
    \begin{center}
        for all $\Phi\in\Auteq(X)$, $\hcat(\Phi) = \log\rho(\numg{\Phi})$.
    \end{center}
\end{enumerate}
The inequality $\log\rho(\numg{\Phi})\leq\hcat(\Phi)$ was proved by Kikuta, Shiraishi, and Takahashi \cite{kikuta_shiraishi_takahashi_2020_a_note_on_entropy_of_autoequivalences_lower_bound_and_the_case_of_orbifold_projective_lines} (see \cref{theorem: numerical lower bound for categorical entropy}).
Thus, the reverse inequality is the essential part of the Gromov--Yomdin property.

As we review below, some classes of varieties satisfy the Gromov--Yomdin property, whereas others do not.
These results suggest that the difference between categorical entropy and the logarithm of the numerical spectral radius reflects information not captured by the numerical action alone.

To study this difference, we consider another measure of complexity provided by \emph{Bridgeland stability conditions} \cite{bridgeland_2007_stability_conditions_on_triangulated_categories} (see \cref{definition: stability condition}).
For a stability condition $\sigma$, the \emph{mass} $m_\sigma(E)$ of an object $E$ is the sum of the absolute values of the central charges of its Harder--Narasimhan factors (see \cref{subsection: stability conditions}).
This quantity depends on the semistable factors of $E$, rather than only on its numerical class.

We write $\Stab_N(X)$ for the space of numerical stability conditions satisfying the support property with respect to the full numerical Grothendieck group $N(X)$.
Dimitrov--Haiden--Katzarkov--Kontsevich \cite{dimitrov_haiden_katzarkov_kontsevich_2014_dynamical_systems_and_categories}
also introduced the notion of \emph{mass growth} $h_\sigma(\Phi)$, which measures the exponential growth of the masses of objects under iteration of $\Phi$ (see \cref{definition: mass growth}).
Ikeda \cite{ikeda_2021_mass_growth_of_objects_and_categorical_entropy} studied its fundamental properties and proved
\[
    \log\rho(\numg{\Phi})\leq h_\sigma(\Phi)\leq\hcat(\Phi)
\]
for every $\Phi\in\Auteq(X)$ and $\sigma\in\Stab_N(X)$ (see \cref{theorem: mass estimates and generator formula}).

The purpose of this paper is to compare these invariants and to relate their equality to the action of autoequivalences on stability conditions by considering the orbits of the action $\Auteq(X)\curvearrowright\Stab_N(X)$.

\subsection{Results}

First, we show that a mass--Hom bound implies the equality between mass growth and categorical entropy.
This bound was introduced by Halpern-Leistner and Robotis \cite{preprint_daniel_antoniosalexandros_2025_the_space_of_augmented_stability_conditions} (see \cref{definition: mass Hom bound}).
Let $\MH_X\subset\Stab_N(X)$ denote the set of numerical stability conditions satisfying this property.
Recent studies by Li--Liu--Liu--Macr\`i--Perry--Stellari--Zhao \cite{preprint_chunyi_zhiyu_ziqi_emanuele_alexander_paolo_xiaolei_2026_stability_conditions_and_moduli_spaces_on_projective_families} and Cheng \cite{preprint_yiran_2026_a_remark_on_the_full_support_property} proved that $\MH_X\neq\varnothing$ for every smooth projective variety $X$ (see \cref{theorem: existence of numerical mass Hom stability conditions}).

We write $\delta_t$ and $h_t$ for categorical complexity and entropy, and $m_{\sigma,t}$ and $h_{\sigma,t}$ for weighted mass and mass growth (see \cref{definition: categorical complexity and entropy,subsection: stability conditions,definition: mass growth}).
Extending the mass--Hom bound to weighted total Hom spaces (see \cref{proposition: weighted total Hom mass bound}), we obtain, for every stability condition $\sigma$ satisfying this bound, every classical generator $G$, and every $t\in\RB$, constants $c,C>0$ such that
\[
    c\,m_{\sigma,t}(E)\leq\delta_t(G,E)\leq C\,m_{\sigma,t}(E)
\]
for every $E\in D^b(X)$.
This yields the following result.

\begin{introthm}[See \cref{corollary: equality of entropy and mass growth}]
\label{theorem in intro: comparison of entropy and mass growth}
    Let $X$ be a smooth projective variety, and let $\sigma$ be a stability condition on $D^b(X)$ satisfying a mass--Hom bound.
    For every Fourier--Mukai endofunctor $\Phi$ of $D^b(X)$, every classical generator $G$, and every $t\in\RB$, the following limit exists and satisfies
    \[
        h_t(\Phi)=h_{\sigma,t}(\Phi)
        =\lim_{n\to\infty}\frac{1}{n}\log m_{\sigma,t}(\Phi^nG).
    \]
\end{introthm}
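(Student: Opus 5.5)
The plan is to derive the statement directly from the two-sided comparison displayed just before it,
\[
c\,m_{\sigma,t}(E)\leq\delta_t(G,E)\leq C\,m_{\sigma,t}(E)\qquad(E\in D^b(X)),
\]
which comes from \cref{proposition: weighted total Hom mass bound}. The constants $c,C$ depend only on $\sigma$, $G$ and $t$. We then combine this with the known existence of the limit defining categorical entropy.

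First I would fix the classical generator $G$ and set $E=\Phi^nG$. This gives
\[
\log c+\log m_{\sigma,t}(\Phi^nG)\leq\log\delta_t(G,\Phi^nG)\leq\log C+\log m_{\sigma,t}(\Phi^nG).
\]
Dividing by $n$ makes the constant terms vanish as $n\to\infty$. By the DHKK result (recalled in \cref{definition: categorical complexity and entropy} together with the generator-independence statement), the limit $h_t(\Phi)=\lim_n\frac1n\log\delta_t(G,\Phi^nG)$ exists in $[-\infty,\infty)$. This uses the subadditivity $\delta_t(G,\Phi^{n+m}G)\leq\delta_t(G,\Phi^nG)\,\delta_t(G,\Phi^mG)$ and Fekete's lemma, and it holds for Fourier--Mukai endofunctors, not only for autoequivalences. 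Squeezing then shows that $\lim_n\frac1n\log m_{\sigma,t}(\Phi^nG)$ exists and equals $h_t(\Phi)$.

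It remains to identify this limit with the mass growth $h_{\sigma,t}(\Phi)$. By \cref{definition: mass growth}, this is $\sup_{E}\limsup_n\frac1n\log m_{\sigma,t}(\Phi^nE)$. The inequality $h_{\sigma,t}(\Phi)\leq h_t(\Phi)$ is Ikeda's estimate (\cref{theorem: mass estimates and generator formula}). It can also be seen directly by applying the comparison to each $E$ and noting $\delta_t(G,\Phi^nE)\leq\delta_t(G,E')\,\delta_t(E',\Phi^nE)$-type bounds. Taking $E=G$ in the supremum gives $h_{\sigma,t}(\Phi)\geq\lim_n\frac1n\log m_{\sigma,t}(\Phi^nG)=h_t(\Phi)$. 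Hence all three quantities agree.

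The main obstacle is the comparison itself, specifically its lower half $c\,m_{\sigma,t}(E)\leq\delta_t(G,E)$. This is where the mass--Hom bound enters. One bounds each semistable factor's contribution $|Z(A_i)|e^{\phi(A_i)t}$ by weighted total Hom dimensions from $G$, uniformly in the phase shift. I take this as already established in \cref{proposition: weighted total Hom mass bound}. The only remaining care is the degenerate case where $\Phi^nG=0$ for some $n$. In that case both sides are $-\infty$ by convention, and the identity holds trivially.
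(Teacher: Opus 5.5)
Your overall route matches the paper's, and those steps are correct. You squeeze $\frac1n\log m_{\sigma,t}(\Phi^nG)$ against $\frac1n\log\delta_t(G,\Phi^nG)$ up to additive constants. You then use the existence of the limit defining $h_t(\Phi)$ and identify the limit with $h_{\sigma,t}(\Phi)$. Your version of the last step, via $h_{\sigma,t}\le h_t$ plus the choice $E=G$, is equivalent to the paper's appeal to Ikeda's generator formula. The problem is your account of the comparison itself. You have the two halves reversed, so the half that carries all the content is never actually justified.

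The lower bound $c\,m_{\sigma,t}(E)\le\delta_t(G,E)$ needs no mass--Hom hypothesis. It is Ikeda's estimate $m_{\sigma,t}(E)\le m_{\sigma,t}(G)\,\delta_t(G,E)$ from \cref{theorem: mass estimates and generator formula}, valid for every stability condition, so $c=m_{\sigma,t}(G)^{-1}$. It follows from subadditivity of mass along triangles together with $m_{\sigma,t}(G[k])=e^{kt}m_{\sigma,t}(G)$. On its own it only gives $\limsup_n\frac1n\log m_{\sigma,t}(\Phi^nG)\le h_t(\Phi)$, which was already known.

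The mass--Hom bound is needed for the upper bound $\delta_t(G,E)\le C\,m_{\sigma,t}(E)$. The paper obtains it in two steps:
\begin{itemize}
\item The DHKK comparison $\delta_t(G,E)\le a(G,t)^{-1}\sum_i\hom^i(G,E)e^{-it}$ from \cref{theorem: categorical entropy and total Hom comparison} reduces complexity to weighted Hom.
\item \cref{proposition: weighted total Hom mass bound} then bounds weighted Hom \emph{by} mass. For semistable $E$ of phase $\phi$, Serre duality confines the nonzero $\hom(G,E[k])$ to the window $\phi^-_\sigma(G)\le\phi+k\le\phi^+_\sigma(G\otimes\omega_X[\dim X])$. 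This makes the weights $e^{-kt}$ comparable to $e^{t\phi}$, and \cref{proposition: total Hom mass bound and deformation} bounds the unweighted total Hom by mass.
\end{itemize}

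Your sketch of ``bounding each semistable factor's contribution $|Z(A_i)|e^{t\phi(A_i)}$ by weighted Hom dimensions from $G$'' is an inequality of the form mass $\lesssim$ Hom. That is the opposite of what the hypothesis provides, and it is not the missing ingredient. You also never mention the DHKK step, which is what turns a Hom-by-mass bound into a complexity-by-mass bound. With the two halves assigned correctly, your squeeze argument goes through verbatim.
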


Note that mass growth is defined using a $\limsup$ (see \cref{definition: mass growth}).
Under the mass--Hom hypothesis, the two-sided comparison above allows us to replace the $\limsup$ in the generator formula by a limit.

Next, we relate the entropy of an autoequivalence to its action on the stability manifold.
Let $\GC=\{G_1,\ldots,G_r\}$ be a finite set of nonzero objects of $D^b(X)$ with $\thcl{\GC}=D^b(X)$, and set
\[
    \Sigma_{\GC}=\{\tau\in\Stab_N(X)\mid G_i\text{ is }\tau\text{-semistable for every }i\}
\]
(see \cref{definition: semistability locus and return times}).
Put $G=\bigoplus_{i=1}^rG_i$.
For every $\Phi\in\Auteq(X)$ and $\sigma\in\Stab_N(X)$, the covariance of mass gives $m_\sigma(\Phi^nG)=m_{\Phi^{-n}\sigma}(G)$.
Thus, the mass growth of $\Phi$ can be studied along the negative orbit of $\sigma$.
Our second observation is the following, which we call the \emph{orbit escape principle}.

\begin{introthm}[See \cref{theorem: orbit escape principle}]
\label{theorem in intro: orbit escape principle}
    Let $\Phi\in\Auteq(X)$ and $\sigma\in\MH_X$.
    If
    \[
        N_{\GC,\sigma}(\Phi)\coloneqq\{n\in\ZZ_{\geq0}\mid\Phi^{-n}\sigma\in\Sigma_{\GC}\}
    \]
    is infinite, then
    \[
        \hcat(\Phi)=h_\sigma(\Phi)=\log\rho(\numg{\Phi}).
    \]
    Equivalently, if $\hcat(\Phi)>\log\rho(\numg{\Phi})$, there is an integer $N$ such that $\Phi^{-n}\sigma\notin\Sigma_{\GC}$ for every $n\geq N$.
\end{introthm}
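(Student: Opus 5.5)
By \cref{theorem in intro: comparison of entropy and mass growth} applied at $t=0$, and since $\sigma\in\MH_X$, we already have
\[
\hcat(\Phi)=h_\sigma(\Phi)=\lim_{n\to\infty}\frac1n\log m_\sigma(\Phi^nG),
\]
where $G=\bigoplus_i G_i$ is a classical generator. The limit exists, so it suffices to evaluate it along any infinite subsequence. We will use the subsequence $n\in N_{\GC,\sigma}(\Phi)$. Together with the lower bound $\log\rho(\numg{\Phi})\le h_\sigma(\Phi)$ of \cref{theorem: mass estimates and generator formula}, it then remains only to show
\[
\limsup_{n\in N_{\GC,\sigma}(\Phi)}\frac1n\log m_\sigma(\Phi^nG)\le\log\rho(\numg{\Phi}).
\]

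The key step uses covariance of mass: $m_\sigma(\Phi^nG)=m_{\Phi^{-n}\sigma}(G)=\sum_i m_{\Phi^{-n}\sigma}(G_i)$. For $n\in N_{\GC,\sigma}(\Phi)$, each $G_i$ is $\Phi^{-n}\sigma$-semistable, so its mass equals the absolute value of its central charge:
\[
m_{\Phi^{-n}\sigma}(G_i)=\abs{Z_{\Phi^{-n}\sigma}(v(G_i))}=\abs{Z_\sigma(\numg{\Phi}^n v(G_i))}.
\]
Here $Z_{\Phi^{-n}\sigma}=Z_\sigma\circ\numg{\Phi}^n$ by the definition of the action of $\Auteq(X)$ on $\Stab_N(X)$; with the convention $\Phi\cdot\sigma=(Z\circ(\numg{\Phi})^{-1},\Phi(\PC))$ this is a direct check. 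Since $Z_\sigma\colon N(X)_\RB\to\CB$ is linear on a finite-dimensional space, we get
\[
\abs{Z_\sigma(\numg{\Phi}^n v)}\le\norm{Z_\sigma}\,\norm{(\numg{\Phi})^n}\,\norm{v}.
\]
Gelfand's formula $\lim_n\norm{(\numg{\Phi})^n}^{1/n}=\rho(\numg{\Phi})$ then gives the desired bound along the subsequence. Hence $h_\sigma(\Phi)\le\log\rho(\numg{\Phi})$, and all three quantities coincide.

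The equivalent formulation is the contrapositive: if $\hcat(\Phi)>\log\rho(\numg{\Phi})$, then $N_{\GC,\sigma}(\Phi)$ is finite, so some $N$ exceeds all its elements.

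The main obstacle is already handled by \cref{theorem in intro: comparison of entropy and mass growth}: the existence of the genuine limit, rather than a $\limsup$, is what allows passing to an arbitrary infinite subsequence. Without the mass--Hom bound, the subsequence would only bound a $\liminf$, and the argument would fail. The only remaining care is to keep the direction of the action consistent, so that semistability of $G_i$ for $\Phi^{-n}\sigma$ corresponds to the mass of $\Phi^nG_i$ for $\sigma$, and to check that the central charge transforms by $\numg{\Phi}^n$ rather than its inverse. With the other convention one would instead get the spectral radius of $(\numg{\Phi})^{-1}$, so this convention check is where an error would show up.
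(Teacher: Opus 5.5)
Your proof is correct and follows essentially the same route as the paper. At return times you bound $m_\sigma(\Phi^nG)$ by $C\norm{(\numg{\Phi})^n}$, using semistability and the transformed central charge, and apply Gelfand's formula along that subsequence; then \cref{corollary: equality of entropy and mass growth} gives $\hcat=h_\sigma$, and the numerical lower bound gives the reverse inequality. You are more explicit than the paper on one point: the subsequence step is legitimate only because that corollary upgrades the $\limsup$ defining $h_\sigma$ to a genuine limit.
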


As an application of the orbit escape principle, we prove the GY property for abelian varieties of arbitrary dimension.
Fu--Li--Zhao \cite{fu_li_zhao_2022_stability_manifolds_of_varieties_with_finite_albanese_morphisms} proved that every simple semihomogeneous vector bundle on an abelian variety $A$ is stable with respect to every numerical stability condition (see \cref{theorem: universal stability for finite Albanese morphisms}).
A finite generating set $\GC$ of line bundles therefore satisfies $\Sigma_{\GC}=\Stab_N(A)$.
Together with $\MH_A\neq\varnothing$ and the general inequalities for mass growth, this yields the following result.

\begin{introthm}[See \cref{theorem: Gromov Yomdin equality for abelian varieties}]
\label{theorem in intro: Gromov Yomdin equality for abelian varieties}
    Let $A$ be an abelian variety.
    For every $\Phi\in\Auteq(A)$ and every $\sigma\in\Stab_N(A)$,
    \[
        \hcat(\Phi)=h_\sigma(\Phi)=\log\rho(\numg{\Phi}).
    \]
\end{introthm}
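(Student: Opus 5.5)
The plan is to apply the orbit escape principle (\cref{theorem in intro: orbit escape principle}) with a generating set of line bundles, and then move from one stability condition in $\MH_A$ to an arbitrary $\sigma\in\Stab_N(A)$ by squeezing with the general mass inequalities.

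First I choose $\GC$. Take an ample line bundle $L$ on $A$ and set $\GC=\{\OO_A,L,L^{2},\ldots,L^{\dim A}\}$, or more generally any finite set of powers of $L$ whose thick closure is $D^b(A)$. Such a set exists by the standard result that consecutive powers of an ample bundle classically generate. Every line bundle on an abelian variety is simple and semihomogeneous. By the theorem of Fu--Li--Zhao (\cref{theorem: universal stability for finite Albanese morphisms}), each $G_i$ is therefore $\tau$-stable for every $\tau\in\Stab_N(A)$, so $\Sigma_{\GC}=\Stab_N(A)$. I only need to check that the cited statement covers all of $\Stab_N(A)$ and not merely a distinguished component. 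If it covers only a component, I would instead work with a $\sigma_0\in\MH_A$ lying in that component; this is harmless because $\Auteq(A)$ preserves $\Stab_N(A)$ and, as I expect, $\MH_A$ is invariant under autoequivalences.

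Next, fix $\sigma_0\in\MH_A$, which is nonempty by \cref{theorem: existence of numerical mass Hom stability conditions}. Since $\Phi^{-n}\sigma_0\in\Stab_N(A)=\Sigma_{\GC}$ for every $n\geq0$, the set $N_{\GC,\sigma_0}(\Phi)$ is all of $\ZZ_{\geq0}$, and in particular it is infinite. The orbit escape principle then gives
\[
\hcat(\Phi)=h_{\sigma_0}(\Phi)=\log\rho(\numg{\Phi}).
\]

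Finally, for an arbitrary $\sigma\in\Stab_N(A)$, Ikeda's inequalities (\cref{theorem: mass estimates and generator formula}) give $\log\rho(\numg{\Phi})\leq h_\sigma(\Phi)\leq\hcat(\Phi)$. Since the two outer terms are equal by the previous step, $h_\sigma(\Phi)$ equals both of them. This squeeze needs no connectedness of $\Stab_N(A)$, because both outer quantities are independent of $\sigma$.

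The main obstacle is entirely the input that semistability of the generators holds along the whole orbit. On general varieties this is exactly what fails, and here it is supplied by the universal stability of simple semihomogeneous bundles. The remaining point to verify is that the generating set consists of objects covered by that theorem, which holds because all line bundles on $A$ are semihomogeneous.
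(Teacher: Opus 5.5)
Your proposal is correct and follows the paper's proof essentially verbatim. Universal stability of line bundles gives $\Sigma_{\GC}=\Stab_N(A)$, the orbit escape principle at some $\sigma_0\in\MH_A$ yields $\hcat(\Phi)=\log\rho(\numg{\Phi})$, and Ikeda's inequalities then pin down $h_\sigma(\Phi)$ for every $\sigma\in\Stab_N(A)$. Two small points: Orlov's generation result used here is stated for a \emph{very} ample $L$, as in the paper, so take $L$ very ample rather than merely ample; and the Fu--Li--Zhao theorem already covers all of $\Stab_N(A)$, so your fallback to a single component is unnecessary.
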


We next extend this equality to varieties with finite Albanese morphisms.

\begin{introthm}[See \cref{theorem: Gromov Yomdin equality for finite Albanese morphisms}]
\label{theorem in intro: Gromov Yomdin equality for finite Albanese morphisms}
    Let $X$ be a smooth projective variety whose Albanese morphism is finite.
    For every $\Phi\in\Auteq(X)$ and every $\sigma\in\Stab_N(X)$,
    \[
        \hcat(\Phi)=h_\sigma(\Phi)=\log\rho(\numg{\Phi}).
    \]
\end{introthm}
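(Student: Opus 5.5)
The plan is to reuse the strategy of \cref{theorem in intro: Gromov Yomdin equality for abelian varieties}, with a finite étale cover of $X$ playing the role of the abelian variety. By the general inequalities $\log\rho(\numg{\Phi})\leq h_\sigma(\Phi)\leq\hcat(\Phi)$, valid for every $\sigma\in\Stab_N(X)$, it is enough to show $\hcat(\Phi)\leq\log\rho(\numg{\Phi})$. For this I will use the orbit escape principle (\cref{theorem in intro: orbit escape principle}). Fix any $\sigma\in\MH_X$; such a $\sigma$ exists because $\MH_X\neq\varnothing$. If I can produce a finite set $\GC$ of classical generators of $D^b(X)$ with $\Sigma_{\GC}=\Stab_N(X)$, then $N_{\GC,\sigma}(\Phi)=\ZZ_{\geq0}$ is infinite, and we are done.

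\textbf{Step 1 (universally stable generators).} This is where I use the finite Albanese hypothesis. By Fu--Li--Zhao, in the form of \cref{theorem: universal stability for finite Albanese morphisms}, I expect that for a finite Albanese morphism $a\colon X\to A$ the objects $a^*L$, or more generally pullbacks of simple semihomogeneous bundles on $A$, are stable with respect to every $\tau\in\Stab_N(X)$. The route would be: the pullback $a^*\colon\Stab_N(A)\to\Stab_N(X)$, or rather the induced-stability argument, shows that every numerical stability condition on $X$ is ``geometric'' enough that skyscraper sheaves of $A$-fibres are stable and the phases are compatible.

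\textbf{Step 2 (generation).} I then need $\GC=\{a^*L^{\otimes k}\}_{k=0}^{m}$, possibly twisted by topologically trivial line bundles, to classically generate $D^b(X)$. Since $a$ is finite, $a^*L$ is ample whenever $L$ is ample on $A$. Orlov's theorem then says that $\OO,a^*L,\dots,(a^*L)^{\dim X}$ classically generate $D^b(X)$. Each of these pullbacks of line bundles should be universally stable by Step~1. If only \emph{semistability} of such pullbacks is available, that still suffices, since $\Sigma_{\GC}$ only requires semistability.

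\textbf{Fallback.} If Step~1 is not directly available on $X$, I would pass, as the abstract suggests, to a finite étale cover $\pi\colon\tilde X\to X$. I would choose this cover so that $\Phi$ (after replacing it by a power, which scales both sides of the desired equality by the same factor) lifts to an autoequivalence $\tilde\Phi$ of $\tilde X$, for instance via the pullback by an isogeny $\tilde A\to A$ through which the Albanese of $\tilde X$ factors as a product-type situation. I would then compare entropies. The inequality $\hcat(\Phi)\leq\hcat(\tilde\Phi)$ should follow from $\pi_*\pi^*G\supset G$ as a summand and $\pi_*\tilde\Phi^n\cong\Phi^n\pi_*$, while $\rho(\numg{\tilde\Phi})=\rho(\numg{\Phi})$ should follow from the pushforward and pullback relations on $N(-)_\RB$ up to the degree factor.

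\textbf{Main obstacle.} I expect the hardest step to be exactly the construction of a finite generating set that is semistable for \emph{all} numerical stability conditions on $X$, or equivalently the lifting of $\Phi$ to a cover where this is known. In particular, controlling the spectral radius under the lift, where the numerical action might acquire larger eigenvalues on $N(\tilde X)$, needs care. I would handle that by using that $\pi^*\pi_*$ on numerical classes is a finite sum of deck-transformation actions, which commute with $\tilde\Phi$ up to a finite group.
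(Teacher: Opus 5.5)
\textbf{Step 1 fails.} Your main route breaks at Step~1, and the issue is not stability versus semistability. Fu--Li--Zhao give universal stability only for skyscraper sheaves on $X$ and for simple semihomogeneous bundles on an abelian variety. Nothing transports the latter to $a_X^*L$, and in general $a_X^*L$ is not universally semistable.

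Take $X=C\times C$ with $g(C)\geq2$. Then $a_X$ is a closed embedding and the diagonal satisfies $\Delta^2=2-2g<0$. Consider the geometric stability conditions $\sigma_{\omega,B}\in\Stab_N(X)$ with $B=-s\Delta$ and $s>1$. The sequence $0\to\OO_X(-\Delta)\to\OO_X\to\OO_\Delta\to0$ is exact in the tilted heart. Comparing tilt slopes shows that $\OO_X(-\Delta)$ destabilizes $\OO_X$ as soon as $\omega^2<s(s-1)\abs{\Delta^2}$. Twisting by $D$ and replacing $B$ by $B+D$, it follows that no line bundle on $X$ is semistable for all of $\Stab_N(X)$. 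Hence no generating set of line bundles has $\Sigma_{\GC}=\Stab_N(X)$. This is why the paper applies the orbit escape principle only on abelian varieties.

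\textbf{The fallback does not repair this.} Kawamata's cover is $B\times Y$ with $Y$ of general type. For $X=C\times C$ you may take the trivial cover, so you would still need GY on a non-abelian variety, with no universally semistable generators available. The missing idea is the structure of autoequivalences of $B\times Y$. After lifting $\Phi^m$ to $\widetilde\Phi$, the paper uses Caucci--Lombardi's description relative to the general-type factor. This puts a power $\widetilde\Phi^r$ into the normal form $\tensfunc_{\pi_Y^*M}[s]\circ(\tau_f)_*\circ\tensfunc_{\PC_g}\circ(\Psi\boxtimes\id)$ with $\Psi\in\Auteq(B)$. That functor is intertwined, through $\id_B\times a_Y$, with an autoequivalence of the abelian variety $B\times\Alb(Y)$. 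There the abelian case and a unipotence argument give $mr\,\hcat(\Phi)\leq\log\rho(\numg{\Psi})$.

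Your spectral-radius comparison also runs in the wrong direction. The relations $\pi_*\tilde\Phi\cong\Phi\pi_*$ and $\pi_*\pi^*=\deg\pi$ on $N(X)_\RB$ only make $\numg{\Phi}$ a quotient of $\numg{\tilde\Phi}$. That yields $\rho(\numg{\Phi})\leq\rho(\numg{\tilde\Phi})$, the opposite of what you need. Commuting with deck transformations says nothing about eigenvalues on the non-invariant part of $N(\tilde X)_\RB$. The paper never compares these two radii. Instead it uses the fibre-class subspace $W=i_{y*}N(B)_\RB$: on $W$, $(\numg{\widetilde\Phi})^r$ acts as $\pm\numg{\Psi}$, and $p_*$ maps $W$ injectively into $N(X)_\RB$. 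This gives $\rho(\numg{\Psi})\leq\rho(\numg{\Phi})^{mr}$, which closes the chain of inequalities.

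The lift itself also needs a mechanism, which your proposal leaves unspecified. The paper uses the Rouquier isomorphism acting on the finite torsion subgroup of the abelian variety $R_X$, so that a power of $\Phi$ fixes the relevant torsion line bundle.
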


To prove this result, we use Kawamata's finite \'etale Galois cover $p:B\times Y\to X$, where $B$ is an abelian variety of dimension $\dim X-\kappa(X)$ and $Y$ is of general type with finite Albanese morphism \cite{kawamata_1981_characterization_of_abelian_varieties,jiang_2011_an_effective_version_of_a_theorem_of_kawamata_on_the_albanese_map}.
After replacing $\Phi$ by a positive power, we lift it to $B\times Y$ and obtain a normal form involving an autoequivalence $\Psi\in\Auteq(B)$ and standard autoequivalences (see \cref{proposition: lifting a power to the product cover,proposition: relative normal form}).
Comparing categorical entropy and numerical spectral radii then reduces the assertion to the abelian case established in \cref{theorem in intro: Gromov Yomdin equality for abelian varieties} (see \cref{theorem: reduction of entropy to abelian factor}).

Finally, we study which varieties admit autoequivalences of positive categorical entropy.
For smooth projective surfaces, Cantat \cite{cantat_1999_dynamique_des_automorphismes_des_surfaces_projectives_complexes} showed that the existence of an automorphism of positive topological entropy forces the surface to be birational to the projective plane, a K3 surface, an Enriques surface, or an abelian surface.
This motivates the analogous question for categorical entropy.
Constructions using spherical twists show that a surface may admit an autoequivalence of positive categorical entropy even if it has no automorphism of positive topological entropy \cite{mattei_2021_categorical_vs_topological_entropy_of_autoequivalences_of_surfaces}.
The author established this phenomenon for bielliptic surfaces, which have neither spherical objects nor automorphisms of positive topological entropy \cite{yoshida_2026_a_note_on_categorical_entropy_of_bielliptic_surfaces_and_enriques_surfaces}.

We construct positive-entropy autoequivalences on abelian varieties with prescribed equivariance under finite translation groups, and descend them to obtain the following characterization.

\begin{introthm}[See \cref{proposition: positive categorical entropy on abelian varieties,theorem: positive categorical entropy for finite Albanese morphisms}]
\label{theorem in intro: existence results of positive categorical entropy}
    Let $X$ be a smooth projective variety whose Albanese morphism is finite.
    Then, there exists an autoequivalence $\Phi\in\Auteq(X)$ with $\hcat(\Phi)>0$ if and only if $\kappa(X)<\dim X$.

    In particular, every abelian variety admits an autoequivalence with positive categorical entropy.
\end{introthm}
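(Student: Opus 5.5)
The statement to prove is \cref{theorem in intro: existence results of positive categorical entropy}. The plan is to treat the two implications separately. For both, combine the Gromov--Yomdin equality of \cref{theorem in intro: Gromov Yomdin equality for finite Albanese morphisms} with Kawamata's étale cover $p\colon B\times Y\to X$.

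\emph{Step 1 (abelian case).} On an abelian variety $A$ of dimension $g$, by \cref{theorem in intro: Gromov Yomdin equality for abelian varieties} it suffices to find $\Phi\in\Auteq(A)$ with $\rho(\numg{\Phi})>1$. Mukai/Orlov's description identifies $\Auteq(A)$ modulo shifts and translations with an extension of $U(A\times\hat A)$. The numerical action on $N(A)_{\RB}\subset H^{ev}(A)$ is induced by the action on $H^1(A\times \hat A)$ through exterior powers. So I would take the composition $\Phi=T_L\circ\mathcal{S}$. Here $T_L=(-)\otimes L$ for an ample $L$, and $\mathcal{S}$ is a Fourier--Mukai transform with a Poincaré-type kernel, corresponding to the symplectic matrix $\begin{pmatrix}0&-1\\1&0\end{pmatrix}$ (using a polarization isogeny to compare with $\hat A$).

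The induced isometry $f$ of $H^1(A\times\hat A)$ then has a hyperbolic $2\times2$ block such as $\begin{pmatrix}1&-1\\1&0\end{pmatrix}\cdot\begin{pmatrix}1&0\\ \phi_L&1\end{pmatrix}$. With $L$ of large degree, or using $T_L^2$, this block has trace $>2$ and hence a real eigenvalue $\lambda>1$. Restricted to classes $\ch$ of line bundles, i.e. to the span of $\exp(tc_1(L))$, the action is already visible on the Mukai pair $(\mathrm{rank},\chi)$: it acts like the $\SL_2(\ZZ)$ element on $(1,\ell)\mapsto$, with $\ell=c_1(L)^g/g!$. This gives $\rho(\numg{\Phi})\geq\lambda>1$.

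For the descent step, I also need $\Phi$ to be equivariant for a prescribed finite translation group $K\subset A$: $\Phi\circ t_k^*\cong t_{\phi(k)}^*\circ\Phi$. I would arrange this by replacing $L$ by $L^{\otimes m}$ with $mK\subset K(L)$-compatible choices. Alternatively, I would pass to a power so that the corresponding element of $U(A\times\hat A)$ acts trivially on $K\times \hat K$-torsion.

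\emph{Step 2 (if $\kappa(X)<\dim X$).} Then $B$ is a positive-dimensional abelian variety. The Galois group $H$ of $p$ acts diagonally on $B\times Y$, acting on $B$ by translations (after a standard normalization) and on $Y$. Take $\Psi\in\Auteq(B)$ from Step 1, equivariant with respect to the translations by $H$ and commuting with them. Then $\Psi\boxtimes\id_Y$ is $H$-equivariant, so it descends to $\Phi\in\Auteq(X)=\Auteq(D^b_H(B\times Y))$.

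Since $p^*$ and $p_*$ are faithful up to bounded factors on complexity, $\hcat(\Phi)=\hcat(\Psi\boxtimes\id)\ge\hcat(\Psi)>0$. I would check the inequality $\hcat(\Psi\boxtimes\id_Y)\geq\hcat(\Psi)$ via the numerical bound: $\rho$ of the tensor product action is at least $\rho(\numg{\Psi})$. I expect this equivariance and descent to be the main obstacle. The fallback is to choose $\Psi$ as a power that centralizes the finite group image in $\Auteq(B)$, which is possible since translations by $H$ form a finite subgroup normalized by a finite-index subgroup.

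\emph{Step 3 (if $\kappa(X)=\dim X$).} In this case $B$ is a point and $X$ is of general type. By Bondal--Orlov-type results, $K_X$ big implies that every autoequivalence is standard up to the numerical action. The cleanest route, however, is to use the normal form of \cref{proposition: relative normal form} with $B=\mathrm{pt}$: a power of $\Phi$ lifts to a standard autoequivalence $\otimes M\circ g^*[k]$ of $Y$. Here $\Aut(Y)$ is finite, so after a further power $g=\id$. Then $\numg{}$ is multiplication by $\ch(M)$, which is unipotent, so $\rho=1$. Finally, \cref{theorem in intro: Gromov Yomdin equality for finite Albanese morphisms} gives $\hcat(\Phi)=0$.
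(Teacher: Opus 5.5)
Your overall architecture is the paper's: build on the abelian factor an autoequivalence of positive entropy that is compatible with the translation part $H$ of $\Gal(p)$, descend $\Psi\boxtimes\id_{D^b(Y)}$ along Kawamata's cover, and compare entropies through $p^*$ and $p_*$.

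For necessity, your route through \cref{proposition: lifting a power to the product cover} and \cref{proposition: relative normal form} with $B$ a point is a valid alternative. The paper instead shows that $\omega_X$ is ample, because $X$ contains no rational curves, and then uses Bondal--Orlov and Kikuta--Takahashi. Two small fixes are needed there. First, drop the aside that bigness of $K_X$ forces standardness; it is false in general, e.g.\ spherical twists along $(-2)$-curves on surfaces of general type. Second, add the line transferring $\hcat=0$ from the lift back to $\Phi$, via \cref{lemma: categorical entropy of lifts,lemma: categorical entropy of tensor functors}.

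Step 1, however, has a concrete gap. An autoequivalence $\mathcal S$ of $D^b(A)$ realizing $\begin{pmatrix}0&-1\\1&0\end{pmatrix}$ needs an isomorphism $A\cong\widehat A$, essentially a principal polarization. For non-principal $L$ the isogeny $\phi_L$ has degree $>1$, so $\phi_L^*\circ\rderived S_A$ is not an equivalence. Likewise, Beauville's $\mathrm{Sym}^g$ description of the action on $\langle e^{tc_1(L)}\rangle$ is only available in the principal case. Since the factor $B$ of Kawamata's cover is arbitrary, this matters. The paper instead takes $\Psi=\tensfunc_L\circ(\rderived S_A)^{-1}\circ\tensfunc_{N^{-1}}\circ\rderived S_A$ with $N=\det(\rderived S_A(L))^{-1}$ and $L=q^*L_0$, for an isogeny $q\colon A\to A_0$ onto a principally polarized abelian variety. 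It then proves $\numg{\Psi}\circ q^*=q^*\circ\numg{\Psi_0(d)}$.

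The more serious gap is the descent, which you yourself flag. All your remedies give only commutation: changing $L$, or passing to a power of $\Psi$ whose Rouquier isomorphism fixes $H\times\{0\}$. That is, you get $\Psi\circ t_{h*}\cong t_{h*}\circ\Psi$ for each $h$, i.e.\ $(t_h\times t_h)^*K\cong K$ for the kernel $K$. Descent along $p$ (Krug--Sosna) needs more: a $\Gal(p)$-linearization of $K\boxtimes\OO_{\Delta_Y}$ for the diagonal action, i.e.\ isomorphisms satisfying the cocycle condition of \cref{remark: linearizations}. Since $K$ is simple, the obstruction is a class in $H^2(H,\CB^*)$, and it need not vanish for non-cyclic $H$. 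For example, $\tensfunc_L$ commutes with $t_{h*}$ for every $h\in\Ker\phi_L$. Yet its kernel $\Delta_*L$ is $\Ker\phi_L$-linearizable only if $L$ is, and this fails for non-principal $L$ because the commutator pairing is nondegenerate on $\Ker\phi_L$.

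The missing idea is the paper's Step 3. Compute the kernel as $K=\pi_2^*L\otimes d_A^*Q$ with $Q=\rderived S_{\widehat A}(N^{-1})[g]$, and observe that $K\cong f^*(L_0\boxtimes Q)$ for $f(x,y)=(q(y),x-y)$. Choosing $q$ with $H\subset\Ker q$ gives $f\circ(t_h\times t_h)=f$. Hence $K$ inherits a canonical $H$-linearization, and $K\boxtimes\OO_{\Delta_Y}$ a diagonal $\Gal(p)$-linearization. To salvage your own route instead, you would have to prove that this obstruction class is multiplicative under composition of kernels. You could then kill it by a further power of $\Psi$, using that $H^2(H,\CB^*)$ is finite.
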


\begin{Rels}
The Gromov--Yomdin property is known for curves \cite{kikuta_2017_on_entropy_for_autoequivalences_of_the_derived_category_of_curves}, orbifold projective lines \cite{kikuta_shiraishi_takahashi_2020_a_note_on_entropy_of_autoequivalences_lower_bound_and_the_case_of_orbifold_projective_lines}, abelian surfaces and simple abelian varieties \cite{yoshioka_2020_categorical_entropy_for_fouriermukai_transforms_on_generic_abelian_surfaces}, and bielliptic surfaces \cite{yoshida_2026_a_note_on_categorical_entropy_of_bielliptic_surfaces_and_enriques_surfaces}.
Kikuta and Takahashi \cite{kikuta_takahashi_2019_on_the_categorical_entropy_and_the_topological_entropy} proved it for varieties with ample or anti-ample canonical bundle, and established the corresponding equality for pullbacks by surjective endomorphisms.
In contrast, counterexamples have been constructed on Calabi--Yau hypersurfaces \cite{fan_2018_entropy_of_an_autoequivalence_on_calabiyau_manifolds}, K3 surfaces \cite{ouchi_2020_on_entropy_of_spherical_twists}, surfaces containing a $(-2)$-curve \cite{mattei_2021_categorical_vs_topological_entropy_of_autoequivalences_of_surfaces}, and further Calabi--Yau varieties \cite{barbacovi_kim_2023_entropy_of_the_composition_of_two_spherical_twists} via a composition of spherical twists.
The author constructed counterexamples on Enriques surfaces \cite{yoshida_2026_a_note_on_categorical_entropy_of_bielliptic_surfaces_and_enriques_surfaces}, and on certain Hilbert schemes of points on surfaces and hyperk\"ahler manifolds \cite{preprint_tomoki_2026_categorical_entropies_of_hilbert_schemes_of_points_on_surfaces_and_hyperkahler_manifolds}.

Ikeda \cite[Theorem 3.14]{ikeda_2021_mass_growth_of_objects_and_categorical_entropy} proved that $h_t(\Phi)=h_{\sigma,t}(\Phi)$ on any connected component containing an algebraic stability condition.
Our comparison theorem obtains this equality under a mass--Hom bound.
Barbacovi and Kim \cite{barbacovi_kim_2023_on_gromovyomdin_type_theorems_and_a_categorical_interpretation_of_holomorphicity} formulated a categorical analogue of holomorphicity using compatible triples, and proved $h_\sigma(\Phi)=\log\rho(\numg{\Phi})$ under an additional spanning condition on the central charge.
Under the mass--Hom hypothesis, our orbit criterion requires only infinitely many returns to $\Sigma_{\GC}$, rather than compatibility; see \cref{remark: compatible triples and orbit escape}.

Fan, Filip, Haiden, Katzarkov, and Liu \cite{fan_filip_haiden_liu_2021_on_pseudoanosov_autoequivalences} studied mass-growth filtrations and pseudo-Anosov autoequivalences.
Kikuta \cite{kikuta_2023_curvature_of_the_space_of_stability_conditions} related DHKK pseudo-Anosov autoequivalences to hyperbolic isometries of stability spaces modulo the $\CB$-action.
Woolf \cite{woolf_2024_massgrowth_of_triangulated_autoequivalences} related mass growth to eventual displacement in the Bridgeland metric.
In contrast to metric displacement, our orbit escape principle concerns eventual departure from a fixed semistability locus, which need not be bounded.

Bapat, Deopurkar, and Licata \cite{preprint_asilata_anand_anthony_2011_a_thurston_compactification_of_the_space_of_stability_conditions} studied Thurston-type compactifications using projectivized mass functions.
Kikuta, Koseki, and Ouchi \cite{kikuta_koseki_ouchi_2024_thurston_compactifications_of_spaces_of_stability_conditions_on_curves} developed this approach for curves and obtained a Nielsen--Thurston-type classification for elliptic curves.
Further refinements of categorical dynamics include polynomial entropy and polynomial mass growth, introduced by Fan, Fu, and Ouchi \cite{fan_fu_ouchi_2021_categorical_polynomial_entropy}, and asymptotic shifting numbers, introduced by Fan and Filip \cite{fan_filip_2023_asymptotic_shifting_numbers_in_triangulated_categories}.
\end{Rels}

\begin{Out}
In \cref{section: preliminaries}, we recall categorical entropy, stability conditions, mass--Hom bounds, and the required facts about abelian varieties.
We prove the equality between mass growth and categorical entropy (\cref{theorem in intro: comparison of entropy and mass growth}) in \cref{section: equality of mass growth and categorical entropy}.
The orbit escape principle (\cref{theorem in intro: orbit escape principle}) is established in \cref{section: orbit escape principle}.
In \cref{section: abelian varieties}, we prove the Gromov--Yomdin type equality for abelian varieties (\cref{theorem in intro: Gromov Yomdin equality for abelian varieties}) and compare the numerical and cohomological spectral radii.

In \cref{section: finite Albanese morphisms}, we prove the GY property for varieties with finite Albanese morphisms (\cref{theorem in intro: Gromov Yomdin equality for finite Albanese morphisms}).
We lift a positive power of each autoequivalence to Kawamata's finite \'etale product cover of $X$ in \cref{subsection: lifting to the product cover} and, in \cref{subsection: relative normal form}, establish a normal form for a positive power of the lifted autoequivalence.
We then complete the proof in \cref{subsection: proof of Gromov Yomdin property for finite Albanese morphisms}.
Finally, in \cref{section: examples of positive categorical entropy}, we construct autoequivalences with positive categorical entropy on every abelian variety and prove \cref{theorem in intro: existence results of positive categorical entropy}.
\end{Out}

\begin{NoCon}
The conventions and notations used in this paper are listed below:
\begin{itemize}
\item For $E,F\in D^b(X)$, we write $\hom^i(E,F)=\ext^i(E,F)$ for the dimension of $\Hom(E,F[i])=\Ext^i(E,F)$, and put $\hom(E,F)=\hom^0(E,F)$.
\item All stability conditions considered in this paper are numerical stability conditions.
\item All functors are derived unless otherwise stated. However, the symbols $\lderived$ and $\rderived$ are occasionally used to emphasize that the functor is actually derived.
\item For a line bundle $L$ on $X$, we write $\tensfunc_L$ for the tensoring functor $(-)\otimes L$.
\end{itemize}
\end{NoCon}

\begin{AI}
The author used ChatGPT 5.6 Sol and 6 Astra throughout the preparation of this work, particularly for mathematical discussions concerning applications to varieties with finite Albanese morphisms (\cref{section: finite Albanese morphisms}), constructing an example of positive categorical entropy (\cref{section: examples of positive categorical entropy}), and for English proofreading. 
All mathematical arguments and proofs developed with AI assistance were carefully checked and reconstructed by the author.
The author take full responsibility for the mathematical content of this paper, including any errors.
\end{AI}

\begin{Ack}
The author thanks Professors Kohei Kikuta and Genki Ouchi for useful conversations and comments on the draft.
The author acknowledges support from OpenAI through the ChatGPT for Academic Researchers program.
\end{Ack}

\section{Preliminaries}\label{section: preliminaries}

In this section, we recall categorical entropy, Bridgeland stability conditions, and mass--Hom bounds. We also collect the facts about abelian varieties used in \cref{section: abelian varieties}.

\subsection{Categorical entropy}\label{subsection: categorical entropy}
We first recall the notion of a classical generator.

\begin{dfn}\label{definition: thick closure and classical generator}
For a collection $\GC$ of objects of $D^b(X)$, its \emph{thick closure} $\thcl{\GC}$ is the smallest full triangulated subcategory containing $\GC$ and closed under taking direct summands. A finite set $\GC=\{G_1,\ldots,G_r\}$ of nonzero objects is a \emph{finite generating set} if $\thcl{\GC}=D^b(X)$. An object $G$ is a \emph{classical generator}, or a \emph{split-generator}, if $\thcl{G}=D^b(X)$.
\end{dfn}

\begin{thm}[{\cite[Theorem 4]{orlov_2009_remarks_on_generators_and_dimensions_of_triangulated_categories}}]\label{theorem: existence of generator}
Let $L$ be a very ample line bundle on a smooth projective variety $X$ of dimension $d$. For every $k\in\ZZ$, the object
\[
\bigoplus_{j=0}^{d}L^{\otimes(j+k)}
\]
is a classical generator of $D^b(X)$.
\end{thm}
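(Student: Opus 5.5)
The plan is to reduce to the case $k=0$ and then to show that $\OO_X,L,\dots,L^{\otimes d}$ generate $D^b(X)$. Tensoring by $L^{\otimes k}$ is an autoequivalence $\tensfunc_{L^{\otimes k}}$ of $D^b(X)$, and it preserves thick closures. So it suffices to prove that $G_0=\bigoplus_{j=0}^d L^{\otimes j}$ is a classical generator.

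Let $i\colon X\hookrightarrow\PP{N}$ be the embedding given by $L$, so that $L=i^*\OO(1)$. By Beilinson's theorem, $D^b(\PP{N})$ is generated by $\OO,\OO(1),\dots,\OO(N)$. Pulling back, every $i^*F$ lies in $\thcl{L^{\otimes 0},\dots,L^{\otimes N}}$. For a coherent sheaf $\mathcal F$ on $X$, the object $i^*i_*\mathcal F$ has $\mathcal F$ as a direct summand up to a filtration by $\mathcal F\otimes\wedge^q N^\vee[q]$. This only gives generation by $N+1$ powers, not by $d+1$ of them. To reach $d+1$ I would instead use Orlov's argument directly.

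First I show that every coherent sheaf $\mathcal F$ admits a left resolution by sums of negative twists $L^{\otimes(-m)}$. Truncating a long such resolution at length greater than $d$ and using $\Ext^{>d}=0$ on the smooth $d$-dimensional $X$, the object $\mathcal F$ becomes a direct summand of a complex of sums of powers $L^{\otimes(-m)}$, $m\gg0$. Hence $\{L^{\otimes m}\}_{m\in\ZZ}$ classically generates $D^b(X)$. It therefore suffices to show that every power $L^{\otimes m}$ lies in $\thcl{G_0}$.

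For this I take $d+1$ sections $s_0,\dots,s_d\in H^0(L)$ with no common zero. These exist because $L$ is very ample and basepoint free: generic sections cut $X$ down to the empty set after $d+1$ cuts. The Koszul complex
\[
0\to L^{\otimes(m-d-1)}\to (L^{\otimes(m-d)})^{\oplus(d+1)}\to\cdots\to (L^{\otimes(m-1)})^{\oplus(d+1)}\to L^{\otimes m}\to0
\]
is then exact. Exactness expresses $L^{\otimes m}$ as an iterated cone of the $d+1$ consecutive powers $L^{\otimes(m-d-1)},\dots,L^{\otimes(m-1)}$. Read the other way, it expresses $L^{\otimes(m-d-1)}$ in terms of $L^{\otimes(m-d)},\dots,L^{\otimes m}$. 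Inducting upward and downward from the window $0,\dots,d$, every $L^{\otimes m}$ lies in $\thcl{G_0}$. Combined with the previous paragraph, this proves the theorem.

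The main obstacle is the first step: showing that $\mathcal F$ is genuinely a direct summand of the truncated resolution. This requires the vanishing of the connecting class in $\Ext^{d+1}(\mathcal F,K)$, where $K$ is the syzygy sheaf. That vanishing holds because $X$ is smooth of dimension $d$, so any class in $\Ext^{d+1}$ between coherent sheaves is zero.
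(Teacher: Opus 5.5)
Your argument is correct. It is necessarily a different route, because the paper gives no proof of this statement and simply quotes Orlov's Theorem 4.

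Your proof has two elementary steps:
\begin{enumerate}
\item Ampleness of $L$ gives every coherent sheaf $\mathcal F$ a left resolution by sums of powers $L^{\otimes(-m)}$. The vanishing of $\Ext^{d+1}$ between coherent sheaves on the smooth projective $X$ (Serre duality) kills the connecting class, so $\mathcal F$ is a direct summand of the length-$(d+1)$ truncation. Hence $\{L^{\otimes m}\}_{m\in\ZZ}$ classically generates $D^b(X)$. You should state explicitly the routine passage from sheaves to arbitrary complexes via canonical truncation triangles.
\item The exact Koszul complex of $d+1$ sections without common zeros moves any window of $d+1$ consecutive powers one step up or down.
\end{enumerate}
This uses only that $L$ is ample and globally generated. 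It proves exactly what the paper needs, namely classical generation, which is all that is used for the finite generating sets of line bundles and for the object $G_{A'}$. The reduction to $k=0$ is harmless but unnecessary, since the Koszul induction starts from any window.

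What the citation buys in addition is strong generation: Orlov's theorem bounds the number of cones uniformly, which is the point of his paper on Rouquier dimension. Your induction cannot give such a bound. The twists in the resolutions depend on $\mathcal F$, and the number of Koszul steps needed to reach $L^{\otimes m}$ grows with $\abs{m}$.

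Finally, the paragraph on Beilinson's theorem and $\lderived i^*i_*\mathcal F$ plays no role, and its direct-summand claim is not justified as stated: the counit $\lderived i^*i_*\mathcal F\to\mathcal F$ need not split. You should simply delete it.
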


We use the definition of categorical entropy introduced in \cite{dimitrov_haiden_katzarkov_kontsevich_2014_dynamical_systems_and_categories}.

\begin{dfn}\label{definition: categorical complexity and entropy}
Let $A,E\in D^b(X)$ and $t\in\RB$. The \emph{complexity of $E$ with respect to $A$} is
\[
\delta_t(A,E)\coloneqq\inf\left\{\sum_{i=1}^{k}e^{n_it}\ \middle|\
\begin{xy}
(0,5)*{0}="0", (20,5)*{E_1}="1", (33,5)*{\cdots},
(48,5)*{E_{k-1}}="k-1", (73,5)*{E\oplus E'}="k",
(10,-7)*{A[n_1]}="n1", (61,-7)*{A[n_k]}="nk",
\ar "0";"1" \ar "1";"n1" \ar@{.>} "n1";"0"
\ar "k-1";"k" \ar "k";"nk" \ar@{.>} "nk";"k-1"
\end{xy}
\right\}.
\]

For a Fourier--Mukai endofunctor $\Phi$ and classical generators $G,G'$ of $D^b(X)$, its \emph{entropy} is defined to be 
\[
h_t(\Phi)\coloneqq\lim_{n\to\infty}\frac{1}{n}\log\delta_t(G,\Phi^nG').
\]
Moreover, we write $\hcat(\Phi)\coloneqq h_0(\Phi)$ and call it the \emph{categorical entropy} of $\Phi$.
\end{dfn}

For $A,E\in D^b(X)$, the spaces $\Hom(A,E[i])$ are finite-dimensional and vanish for all but finitely many $i\in\ZZ$. Categorical entropy can be computed from these Hom spaces as follows.

\begin{thm}[{\cite[Lemma 2.5 and Theorem 2.6]{dimitrov_haiden_katzarkov_kontsevich_2014_dynamical_systems_and_categories}}]\label{theorem: categorical entropy and total Hom comparison}
Let $X$ be a smooth projective variety and let $t\in\RB$.
\begin{enumerate}
\item For every classical generator $G$ of $D^b(X)$, there are constants $a(G,t),b(G,t)>0$ such that
\[
a(G,t)\delta_t(G,E)\leq\sum_{i\in\ZZ}\hom^i(G,E)e^{-it}\leq b(G,t)\delta_t(G,E)
\]
for every $E\in D^b(X)$.
\item For every Fourier--Mukai endofunctor $\Phi$ and any two classical generators $G,G'$, the limit defining $h_t(\Phi)$ exists, is independent of $G,G'$, and satisfies
\[
h_t(\Phi)=\lim_{n\to\infty}\frac{1}{n}\log\left(\sum_{i\in\ZZ}\hom^i(G,\Phi^nG')e^{-it}\right).
\]
Its value belongs to $[-\infty,\infty)$. If $\Phi$ is an autoequivalence, then $\hcat(\Phi)\geq0$.
\end{enumerate}
\end{thm}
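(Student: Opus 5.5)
Part (1) is a two-sided comparison, so I prove each inequality separately. Both the weighted complexity $\delta_t(G,-)$ and the weighted total Hom function
\[
\epsilon_t(G,E)\coloneqq\sum_{i\in\ZZ}\hom^i(G,E)e^{-it}
\]
behave well along exact triangles; the plan is to exploit this.

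\textbf{Upper bound on $\epsilon_t$.} Since $\hom^i(G,-)$ is half-exact, every triangle $E_{j-1}\to E_j\to G[n_j]$ gives
\[
\epsilon_t(G,E_j)\le\epsilon_t(G,E_{j-1})+\epsilon_t(G,G[n_j]),
\qquad
\epsilon_t(G,G[n_j])=e^{n_jt}\epsilon_t(G,G).
\]
Iterating along a tower realizing $E\oplus E'$, and using $\epsilon_t(G,E)\le\epsilon_t(G,E\oplus E')$, yields
\[
\epsilon_t(G,E)\le\epsilon_t(G,G)\sum_j e^{n_jt}.
\]
Taking the infimum over towers gives $\epsilon_t(G,E)\le b\,\delta_t(G,E)$ with $b=\epsilon_t(G,G)$.

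\textbf{Lower bound on $\epsilon_t$ (the main obstacle).} Here I must build an efficient tower for $E$ out of Hom data, and I use smoothness and properness of $D^b(X)$.

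1. Since $D^b(X)$ is smooth, $G$ classically generates and $\mathrm{End}^\bullet(G)$ is homologically smooth. Hence the diagonal bimodule lies in the thick closure of $G^\vee\boxtimes G$ in finitely many steps. So there is a fixed finite tower expressing $\mathcal{O}_\Delta\oplus(\text{summand})$ from finitely many shifts $G^\vee\boxtimes G[m_k]$, with $m_k$ in a bounded set.

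2. Apply this tower as Fourier--Mukai kernels to $E$. The functor with kernel $G^\vee\boxtimes G$ sends $E$ to
\[
\rderived\Hom(G,E)\otimes G\cong\bigoplus_i G[-i]^{\oplus\hom^i(G,E)}.
\]
Therefore $E\oplus E'$ is built from at most $\sum_k\sum_i\hom^i(G,E)$ copies of $G[m_k-i]$.

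3. The cost of this tower is
\[
\sum_k\sum_i\hom^i(G,E)e^{(m_k-i)t}\le\Big(\sum_k e^{m_kt}\Big)\epsilon_t(G,E).
\]
This gives $\delta_t(G,E)\le a^{-1}\epsilon_t(G,E)$ with $a^{-1}=\sum_k e^{m_kt}$, a constant depending only on $G$ and $t$.

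4. Care is needed that direct-sum cones are realized by a tower with one step per copy of $G$; this holds because sums split into successive triangles.

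\textbf{Part (2).} By (1), it suffices to treat $\epsilon_t(G,\Phi^nG')$. First, $\delta_t$ satisfies the triangle inequality
\[
\delta_t(A,C)\le\delta_t(A,B)\,\delta_t(B,C),
\]
obtained by substituting towers. It also satisfies
\[
\delta_t(\Phi A,\Phi B)\le\delta_t(A,B),
\]
since $\Phi$ is exact and preserves towers and shifts. Combining these,
\[
\delta_t(G,\Phi^{n+m}G)\le\delta_t(G,\Phi^nG)\,\delta_t(G,\Phi^mG),
\]
so the sequence $\log\delta_t(G,\Phi^nG)$ is subadditive. By Fekete's lemma the limit exists in $[-\infty,\infty)$. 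Changing $G$ or $G'$ multiplies $\delta_t$ by at most bounded factors $\delta_t(G,\tilde G)$, which disappear after dividing by $n$; this gives independence and the Hom formula.

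\textbf{Nonnegativity for autoequivalences.} If $\Phi$ is an autoequivalence, then $\Phi^nG'\neq0$. Hence some $\hom^i(G,\Phi^nG')\ge1$ (as $G$ generates), and at $t=0$ this gives $\epsilon_0\ge1$. Therefore $\hcat(\Phi)\ge0$.
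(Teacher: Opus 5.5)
Your proposal is correct and follows the argument of Dimitrov--Haiden--Katzarkov--Kontsevich that the paper cites rather than proves. It gets $\sum_i\hom^i(G,E)e^{-it}\le b\,\delta_t(G,E)$ from long exact sequences (with $b=\sum_i\hom^i(G,G)e^{-it}$ finite by properness), gets the reverse bound by building $\OO_\Delta$, up to a summand, from shifts of $G^\vee\boxtimes G$ and applying these kernels to $E$, and gets existence and independence of the limit from submultiplicativity of $\delta_t$ and Fekete's lemma. In this geometric setting the smoothness input is simply that $G^\vee\boxtimes G$ classically generates $D^b(X\times X)$, so you need not pass through homological smoothness of $\End^\bullet(G)$.
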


\begin{prop}[{\cite[Section 2]{dimitrov_haiden_katzarkov_kontsevich_2014_dynamical_systems_and_categories}}]\label{proposition: basic properties of categorical entropy}
Let $\Phi$ be a Fourier--Mukai endofunctor of $D^b(X)$, $r>0$ be an integer, and $s\in\ZZ$. 
Then, the following hold:
\begin{enumerate}
    \item $h_t(\Phi^r)=rh_t(\Phi)$.
    \item $h_t([s]\circ\Phi)=h_t(\Phi)+st$ for every $t\in\RB$. 
    \item If $\Psi:D^b(X)\to D^b(Y)$ is an equivalence, then $h_t(\Psi\Phi\Psi^{-1})=h_t(\Phi)$.
    \item For commuting Fourier--Mukai autoequivalences $\Phi_1,\Phi_2$, $h_t(\Phi_1\Phi_2)\leq h_t(\Phi_1)+h_t(\Phi_2)$.
\end{enumerate}
\end{prop}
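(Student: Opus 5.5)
We use the Hom-sum formula of \cref{theorem: categorical entropy and total Hom comparison}(2) throughout, and we fix classical generators $G,G'$ freely, since the limit does not depend on them. All four assertions then reduce to manipulating the quantities $\delta_t(G,\Phi^nG')$ or $\sum_i\hom^i(G,\Phi^nG')e^{-it}$.

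For (1), we take the subsequence $n=rm$ in the limit defining $h_t(\Phi)$:
\[
h_t(\Phi^r)=\lim_{m\to\infty}\frac1m\log\delta_t(G,\Phi^{rm}G')=r\lim_{m\to\infty}\frac{1}{rm}\log\delta_t(G,\Phi^{rm}G')=rh_t(\Phi).
\]
This is legitimate because the full limit exists.

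For (2), we use that $[s]$ commutes with $\Phi$, so $([s]\Phi)^n=\Phi^n[ns]$. Then
\[
\sum_i\hom^i(G,\Phi^nG'[ns])e^{-it}=\sum_i\hom^{i+ns}(G,\Phi^nG')e^{-it}=e^{nst}\sum_j\hom^j(G,\Phi^nG')e^{-jt}.
\]
Taking $\frac1n\log$ and letting $n\to\infty$ gives $h_t([s]\Phi)=h_t(\Phi)+st$.

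For (3), we note that $\Psi$ sends classical generators to classical generators and preserves exact triangles and shifts. Hence $\delta_t(\Psi G,\Psi E)=\delta_t(G,E)$, because $\Psi$ maps decompositions bijectively onto decompositions. Choosing the generators $\Psi G,\Psi G'$ on $D^b(Y)$, we get
\[
\delta_t(\Psi G,(\Psi\Phi\Psi^{-1})^n\Psi G')=\delta_t(\Psi G,\Psi\Phi^nG')=\delta_t(G,\Phi^nG').
\]
The two entropies therefore coincide. We will need to remark that $\Psi\Phi\Psi^{-1}$ is again of Fourier--Mukai type, which holds because compositions of Fourier--Mukai functors are Fourier--Mukai.

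For (4), the main step is a submultiplicativity estimate: for any exact functor $F$,
\[
\delta_t(G,F E)\le\delta_t(G,FG)\,\delta_t(G,E).
\]
To see this, apply $F$ to a decomposition of $E\oplus E'$ into cones of the objects $G[n_i]$. This exhibits $FE\oplus FE'$ as built from the objects $FG[n_i]$. Then refine each $FG[n_i]$ using a decomposition of $FG$ with respect to $G$, which multiplies the weights $e^{n_it}$ by those of $FG$. Taking infima gives the inequality. This is the standard triangle inequality of Dimitrov--Haiden--Katzarkov--Kontsevich, and the direct-summand bookkeeping is the only delicate point. Since $\Phi_1$ and $\Phi_2$ commute, $(\Phi_1\Phi_2)^n=\Phi_1^n\Phi_2^n$, and hence
\[
\delta_t(G,\Phi_1^n\Phi_2^nG)\le\delta_t(G,\Phi_1^nG)\,\delta_t(G,\Phi_2^nG).
\]
Taking $\frac1n\log$ and passing to the limit gives $h_t(\Phi_1\Phi_2)\le h_t(\Phi_1)+h_t(\Phi_2)$.

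The submultiplicativity estimate in (4) is the main obstacle; the other three parts are formal consequences of the Hom-sum formula and the existence of the limit.
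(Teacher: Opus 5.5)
Your proposal is correct. The paper gives no proof of its own and only cites DHKK, and your four arguments are the standard ones behind that citation: taking a subsequence of the existing limit, shifting the grading in the Hom-sum formula, transporting towers under an equivalence, and the triangle-inequality/functoriality estimate $\delta_t(G,\Phi_1^n\Phi_2^nG)\le\delta_t(G,\Phi_1^nG)\,\delta_t(G,\Phi_2^nG)$. The one input you leave implicit in (3) is Orlov's representability theorem: it guarantees that $\Psi$ and $\Psi^{-1}$ are of Fourier--Mukai type, so $\Psi\Phi\Psi^{-1}$ is again a Fourier--Mukai endofunctor.
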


\begin{lem}[{\cite[Lemma 3.10]{kikuta_takahashi_2019_on_the_categorical_entropy_and_the_topological_entropy}}]\label{lemma: categorical entropy of tensor functors}
For every line bundle $L$ on $X$, one has $\hcat(\tensfunc_L)=0$.
\end{lem}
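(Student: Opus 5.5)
We use the Hom formula of \cref{theorem: categorical entropy and total Hom comparison}(2) with $t=0$, which gives
\[
\hcat(\tensfunc_L)=\lim_{n\to\infty}\frac{1}{n}\log\Bigl(\sum_{i\in\ZZ}\hom^i(G,G\otimes L^{\otimes n})\Bigr).
\]
It therefore suffices to show that this total Hom dimension grows at most polynomially in $n$. Autoequivalences have nonnegative entropy by the same theorem, so the inequality $\hcat(\tensfunc_L)\geq0$ is automatic.

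Fix a very ample line bundle $H$ on $X$ and let $d=\dim X$. By \cref{theorem: existence of generator}, $G=\bigoplus_{j=0}^{d}H^{\otimes j}$ is a classical generator, and we take $G'=G$. Then
\[
\hom^i(G,G\otimes L^{\otimes n})=\sum_{j,k=0}^{d}h^i\bigl(X,H^{\otimes(k-j)}\otimes L^{\otimes n}\bigr).
\]
Only the degrees $0\leq i\leq d$ contribute, so we need a polynomial bound on $h^i(X,M\otimes L^{\otimes n})$ for finitely many fixed line bundles $M$ and for all $n\geq0$.

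For this, write $L\cong A\otimes B^{-1}$ with $A$ and $B$ very ample. The key input is the standard estimate: for a coherent sheaf $F$ and line bundles $A,B$, each $h^i(X,F\otimes A^{\otimes n}\otimes B^{\otimes -n})$ is bounded by a polynomial in $n$ of degree at most $d$. One way to prove it is to choose a smooth divisor $D\in|B|$ and use the exact sequences
\[
0\to F(-(m+1)D)\to F(-mD)\to F(-mD)|_D\to 0,
\]
then induct on dimension together with the growth in $n$. An alternative, cleaner route is to use that $h^i$ is bounded by a polynomial in the intersection numbers of $A$ and $B$, via a Koszul or Castelnuovo--Mumford type bound, as in Kleiman's estimate $h^i(X,\mathcal{F}(D))=O(|D|^{d})$ for Cartier divisors $D$ ranging in a finite-dimensional lattice. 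The second route is the one to follow: $nc_1(L)$ lies on a ray in $N^1(X)$, so $h^i$ grows like $O(n^d)$.

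Once this bound is in hand, $\frac1n\log$ of a polynomial tends to $0$, which gives $\hcat(\tensfunc_L)\leq0$ and hence equality. The main obstacle is rigorously establishing the polynomial bound on $h^i$ for non-ample $L$. I would first try the asymptotic-cohomology bound $h^i(X,\mathcal{F}\otimes\mathcal{O}(nD))=O(n^d)$, which holds for any Cartier divisor $D$ (see Lazarsfeld, \emph{Positivity I}, Example~1.2.36). It applies directly with $\mathcal{F}=M$ and $D=c_1(L)$.
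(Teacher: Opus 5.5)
Your argument is correct and is the standard proof behind the cited lemma; the paper gives no proof of its own and only refers to Kikuta--Takahashi. Concretely, the total Hom formula with Orlov's generator reduces $\hcat(\tensfunc_L)\leq 0$ to the bound $h^i(X,\mathcal F\otimes\OO_X(nD))=O(n^{\dim X})$ for a fixed Cartier divisor $D$ with $\OO_X(D)\cong L$ (Lazarsfeld, Example~1.2.36). Nonnegativity of entropy for autoequivalences then gives the reverse inequality.

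Only cosmetic fixes are needed. First, $D$ should be such a Cartier divisor rather than $c_1(L)$. Second, the remark about rays in $N^1(X)$ is not a justification, since cohomology dimensions are not numerical invariants; replace it by the cited Lazarsfeld bound for the fixed divisor $D$. Alternatively, you can bypass that bound by writing $L\cong A\otimes B^{-1}$ with $A,B$ ample and combining subadditivity for the commuting functors $\tensfunc_A$ and $\tensfunc_{B^{-1}}$ with Serre vanishing and Serre duality.
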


The following estimate will be used to compare the growth of fixed objects with that of a generator.

\begin{lem}\label{lemma: fixed objects and total Hom growth}
Let $G$ be a classical generator of $D^b(X)$, and $A,B\in D^b(X)$. There is a constant $C(A,B,G)>0$ such that, for every exact endofunctor $\Phi$ and every $n\geq0$,
\[
\sum_{i\in\ZZ}\hom^i(A,\Phi^nB)\leq C(A,B,G)\sum_{i\in\ZZ}\hom^i(G,\Phi^nG).
\]
\end{lem}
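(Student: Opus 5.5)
The idea is to use that $A$ and $B$ are built from $G$ by finitely many cones, shifts and summands, and that the total Hom dimension is subadditive along exact triangles. The shifts are harmless because we sum over all $i$.

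First fix $B$ and vary the first argument. Consider the class $\mathcal{S}_1$ of objects $P$ for which there is a constant $c_P$ with
\[
\sum_i\hom^i(P,\Phi^nG)\leq c_P\sum_i\hom^i(G,\Phi^nG)
\]
for every exact endofunctor $\Phi$ and every $n\geq0$. This class has the following closure properties.
\begin{itemize}
\item It contains $G$, with $c_G=1$.
\item It is closed under shifts, since $\sum_i\hom^i(P[m],F)=\sum_i\hom^i(P,F)$.
\item It is closed under direct summands, since the Hom dimensions of a summand are bounded by those of the whole object.
\item It is closed under cones. Given a triangle $P'\to P\to P''\to P'[1]$, the long exact sequence for $\Hom(-,\Phi^nG[i])$ gives $\hom^i(P,F)\leq\hom^i(P',F)+\hom^i(P'',F)$, so $c_P=c_{P'}+c_{P''}$ works.
\end{itemize}
Hence $\mathcal{S}_1$ is a thick subcategory containing $G$, and so it is all of $D^b(X)=\thcl{G}$. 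In particular $A\in\mathcal{S}_1$.

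Next, vary the second argument, but without applying $\Phi^n$ to the generator. Let $\mathcal{S}_2$ be the class of objects $Q$ for which there is $d_Q$ with
\[
\sum_i\hom^i(A,\Phi^nQ)\leq d_Q\sum_i\hom^i(A,\Phi^nG)
\]
for all exact $\Phi$ and all $n$. The key point is that $\Phi^n$ is exact. It therefore sends a triangle $Q'\to Q\to Q''$ to a triangle, commutes with shifts up to natural isomorphism, and preserves direct summands, because an additive functor preserves splittings. The same subadditivity argument then shows that $\mathcal{S}_2$ is thick and contains $G$. So $B\in\mathcal{S}_2$, with a constant $d_B$ that is independent of $\Phi$ and $n$.

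Combining the two steps gives
\[
\sum_i\hom^i(A,\Phi^nB)\leq d_B\sum_i\hom^i(A,\Phi^nG)\leq d_Bc_A\sum_i\hom^i(G,\Phi^nG).
\]
So we may take $C(A,B,G)=c_Ad_B$; it is positive after enlarging it to at least $1$ if needed.

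The only point needing care is the uniformity of the constants in $\Phi$ and $n$. This holds because each constant is produced by a fixed finite construction of $A$ (respectively $B$) from $G$. That construction uses only triangles, shifts and summands, and does not depend on $\Phi$. The second step uses nothing beyond exactness and additivity of $\Phi^n$. Finite-dimensionality and the vanishing of $\hom^i$ for $\abs{i}\gg0$ ensure that all the sums are finite.
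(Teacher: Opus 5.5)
Your proof is correct and follows the same route as the paper, which likewise uses a fixed construction of $A$ from shifts of $G$ in the first argument and pushes a fixed construction of $B$ through the exact functor $\Phi^n$ in the second argument, bounding total Hom dimensions via long exact sequences. Your classes $\mathcal{S}_1$ and $\mathcal{S}_2$ package that induction as a thick-subcategory argument, which makes the uniformity of the constants in $\Phi$ and $n$ transparent; note that the cone-closure step also uses rotation of triangles together with shift invariance.
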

\begin{proof}
Choose finite constructions of $A$ and $B$ from shifts of $G$ by exact triangles and direct summands, and apply $\Phi^n$ to the construction of $B$. The long exact sequences of Hom spaces give the estimate. Shifts do not change the total Hom dimension, and the two fixed constructions are independent of $\Phi$ and $n$.
\end{proof}

\subsection{Numerical actions and Fourier--Mukai transforms}\label{subsection: numerical actions}
We recall the numerical and cohomological actions of Fourier--Mukai functors; see \cite[Chapter 5]{book_huybrechts_2006_fouriermukai_transforms_in_algebraic_geometry}.
The Euler form on $D^b(X)$ is $\chi_X(E,F)=\sum_{i\in\ZZ}(-1)^i\hom^i(E,F)$. The \emph{numerical Grothendieck group} is $N(X)\coloneqq K_0(X)/\rad\chi_X$, where
\[
\rad\chi_X=\{v\in K_0(X)\mid\chi_X(v,w)=\chi_X(w,v)=0\text{ for every }w\in K_0(X)\}.
\]
It is a free abelian group of finite rank. We write $N(X)_{\RB}=N(X)\otimes_{\ZZ}\RB$. Every autoequivalence $\Phi$ induces an automorphism $\numg{\Phi}$ of $N(X)$, for which we use the same notation after extending scalars.

\begin{thm}[{\cite[Theorem 2.13]{kikuta_shiraishi_takahashi_2020_a_note_on_entropy_of_autoequivalences_lower_bound_and_the_case_of_orbifold_projective_lines}}]\label{theorem: numerical lower bound for categorical entropy}
For every $\Phi\in\Auteq(X)$, one has
\[
\log\rho(\numg{\Phi})\leq\hcat(\Phi).
\]
\end{thm}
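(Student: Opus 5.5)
We prove $\log\rho(\numg{\Phi})\leq\hcat(\Phi)$ by bounding the numerical action through the Euler pairing, which in turn is controlled by total Hom dimensions. Since $\chi_X$ is nondegenerate on $N(X)_{\RB}$ by construction, we fix classes $[A_1],\dots,[A_m]$ spanning $N(X)_{\RB}$, where $A_j\in D^b(X)$ (for instance, summands of the classical generator of \cref{theorem: existence of generator} together with their cones, or simply objects whose classes span). The quantity
\[
\|v\|\coloneqq\max_{j,k}\abs{\chi_X(A_j,v)}
\]
is then a norm on $N(X)_{\RB}$. To compute the operator norm we evaluate on the spanning set: the entries $\chi_X(A_j,\Phi^nA_k)$ determine $(\numg{\Phi})^n$ in this basis up to a fixed invertible change of coordinates given by the Gram matrix. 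Hence there is a constant $K>0$, independent of $n$, with
\[
\norm{(\numg{\Phi})^n}\leq K\max_{j,k}\abs{\chi_X(A_j,\Phi^nA_k)}.
\]

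Next we estimate each entry by total Hom dimensions:
\[
\abs{\chi_X(A_j,\Phi^nA_k)}\leq\sum_i\hom^i(A_j,\Phi^nA_k)\leq C(A_j,A_k,G)\sum_i\hom^i(G,\Phi^nG),
\]
where the second inequality is \cref{lemma: fixed objects and total Hom growth} for a fixed classical generator $G$. Combining this with the bound from the first step and Gelfand's formula $\rho(\numg{\Phi})=\lim_n\norm{(\numg{\Phi})^n}^{1/n}$ gives
\[
\log\rho(\numg{\Phi})\leq\limsup_n\frac1n\log\sum_i\hom^i(G,\Phi^nG)=\hcat(\Phi),
\]
where the final equality is \cref{theorem: categorical entropy and total Hom comparison}(2) at $t=0$ with $G'=G$.

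The main obstacle is the first step: making sure that the Euler form pairs the span of the $[A_j]$ nondegenerately, so that matrix coefficients control the operator norm. Since $\chi_X$ is not symmetric, we use the left and right radicals together; quotienting by $\rad\chi_X$ means that $v\mapsto(\chi_X(A_j,v))_j$ is injective on $N(X)_{\RB}$ once the $[A_j]$ span. This holds because the left and right radicals have equal dimension and both are killed, which gives a two-sided nondegenerate form on the quotient.
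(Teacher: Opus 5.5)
Your route is the same as the argument behind the cited result \cite[Theorem 2.13]{kikuta_shiraishi_takahashi_2020_a_note_on_entropy_of_autoequivalences_lower_bound_and_the_case_of_orbifold_projective_lines}. You write $(\numg{\Phi})^n$ in a basis $[A_1],\dots,[A_m]$ via the inverse Gram matrix and the matrix $\bigl(\chi_X(A_j,\Phi^nA_k)\bigr)_{j,k}$, and bound each Euler pairing by a total Hom dimension. You then compare with $\sum_i\hom^i(G,\Phi^nG)$ using \cref{lemma: fixed objects and total Hom growth}, and conclude with Gelfand's formula and \cref{theorem: categorical entropy and total Hom comparison}. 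All of these steps are fine.

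The step you single out, nondegeneracy, is true but your justification of it is wrong. By the paper's definition, $\rad\chi_X$ is the \emph{intersection} of the left and right radicals. Quotienting by that intersection does not by itself make a form nondegenerate. For example, $b(x,y)=x_1y_2$ on $\RB^2$ has left and right radicals of the same dimension that meet only in $0$, yet $b$ is degenerate. Moreover, ``equal dimension'' has no meaning in $K_0(X)_\RB$, which is typically infinite-dimensional.

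The correct reason is Serre duality: $\chi_X(v,w)=(-1)^{\dim X}\chi_X(w,v\otimes\omega_X)$. Since $-\otimes\omega_X$ is an automorphism of $K_0(X)$, this identity shows that the left and right radicals coincide, so $\rad\chi_X$ equals each of them. Hence the induced integral pairing on $N(X)$ has trivial left and right kernels, and its Gram matrix in a $\ZZ$-basis has nonzero determinant. It therefore stays nondegenerate on $N(X)_\RB$, and in particular $v\mapsto(\chi_X(A_j,v))_j$ is injective there.

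With this replacement, and choosing the $[A_j]$ to be a basis so that the coordinate map is a fixed linear isomorphism, your proof is complete.
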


For $K\in D^b(X\times Y)$, let $\Phi_K(E)=p_{Y*}(p_X^*E\otimes K)$ be the Fourier--Mukai functor with kernel $K$. Its cohomological action is
\[
\Phi_K^H(\alpha)=p_{Y*}\bigl(p_X^*\alpha\cdot\ch(K)\sqrt{\td(X\times Y)}\bigr).
\]
This action preserves even and odd cohomology, but need not preserve each cohomological degree. For an autoequivalence $\Phi$, the space of algebraic Mukai classes is invariant under $\Phi^H$, and $N(X)_{\CB}$ is a quotient of this space. 
Therefore, Riemann--Roch gives
\begin{equation}\label{equation: numerical and cohomological spectral radii}
\rho(\numg{\Phi})\leq\rho(\Phi^H).
\end{equation}
Here the spectral radius on the right is taken on the full cohomology $H^*(X,\CB)$.

\subsection{Stability conditions and mass growth}\label{subsection: stability conditions}
We use stability conditions with respect to a finite-rank lattice $\Lambda$ and a fixed class map $v:K_0(X)\to\Lambda$.

\begin{dfn}[{\cite{bridgeland_2007_stability_conditions_on_triangulated_categories}}]\label{definition: stability condition}
A \emph{stability condition} $\sigma=(Z,\PC)$ on $D^b(X)$ consists of a homomorphism $Z:\Lambda\to\CB$ and full additive subcategories $\PC(\phi)\subset D^b(X)$ for $\phi\in\RB$, satisfying the following conditions.
\begin{enumerate}
\item If $0\ne E\in\PC(\phi)$, then $Z(v(E))\in\RB_{>0}e^{i\pi\phi}$.
\item $\PC(\phi+1)=\PC(\phi)[1]$.
\item If $\phi_1>\phi_2$, then $\Hom(\PC(\phi_1),\PC(\phi_2))=0$.
\item Every nonzero object admits a finite Harder--Narasimhan filtration by exact triangles, with factors $A_j\in\PC(\phi_j)$ and $\phi_1>\cdots>\phi_\ell$.
\end{enumerate}
The slicing $\PC$ is required to be locally finite. Nonzero objects of $\PC(\phi)$ are called \emph{semistable of phase $\phi$}, and its simple objects are called \emph{stable}.
\end{dfn}

Throughout this paper, we also impose the \emph{support property}: for a norm on $\Lambda_{\RB}$, there is $C(\sigma)>0$ such that $\norm{v(A)}\leq C(\sigma)\abs{Z(v(A))}$ for every semistable object $A$.
For a nonzero object $E$ with Harder--Narasimhan factors, its \emph{weighted mass} is defined to be
\[
m_{\sigma,t}(E)\coloneqq\sum_{j=1}^{\ell}\abs{Z(v(A_j))}e^{t\phi_j}.
\]
We put $m_{\sigma,t}(0)=0$ and write $m_\sigma(E)=m_{\sigma,0}(E)$. The largest and smallest phases are denoted by $\phi^+_\sigma(E)$ and $\phi^-_\sigma(E)$.

We write $\Stab_N(X)$ for stability conditions with class map $K_0(X)\to N(X)$ and the support property for this full lattice. For each $\sigma\in\Stab_N(X)$, applying the support property to the Harder--Narasimhan factors gives a constant $C(\sigma)>0$ such that
\begin{equation}\label{equation: numerical norm bounded by mass}
\norm{[E]}\leq C(\sigma)m_\sigma(E)
\end{equation}
for every $E\in D^b(X)$.

The action of $\Phi\in\Auteq(X)$ on $\sigma=(Z,\PC)\in\Stab_N(X)$ is given by $\Phi\sigma=(Z\circ(\numg{\Phi})^{-1},\Phi\PC)$, where $(\Phi\PC)(\phi)=\Phi(\PC(\phi))$. Transport of Harder--Narasimhan filtrations gives
\begin{equation}\label{equation: covariance of mass}
m_{\Phi\sigma,t}(\Phi E)=m_{\sigma,t}(E).
\end{equation}
The functions $\phi^\pm_\sigma(E)$ and $m_\sigma(E)$ are continuous in $\sigma$. For a fixed nonzero object $E$, its stable locus is open and its semistable locus is closed, see \cite{bridgeland_2007_stability_conditions_on_triangulated_categories} for more detail.

\begin{dfn}[{\cite{dimitrov_haiden_katzarkov_kontsevich_2014_dynamical_systems_and_categories}}]\label{definition: mass growth}
Let $\sigma$ be a stability condition on $D^b(X)$. The \emph{mass growth} of a Fourier--Mukai endofunctor $\Phi$ with respect to $\sigma$ is defined by 
\[
h_{\sigma,t}(\Phi)\coloneqq\sup_{0\ne E\in D^b(X)}\limsup_{n\to\infty}\frac{1}{n}\log m_{\sigma,t}(\Phi^nE).
\]
We write $h_\sigma(\Phi)=h_{\sigma,0}(\Phi)$.
\end{dfn}

\begin{thm}[{\cite[Propositions 3.4, 3.11 and Theorem 3.5]{ikeda_2021_mass_growth_of_objects_and_categorical_entropy}}]\label{theorem: mass estimates and generator formula}
Let $\sigma$ be a stability condition on $D^b(X)$ and let $t\in\RB$.
\begin{enumerate}
\item For $0\ne A\in D^b(X)$ and $E\in D^b(X)$, one has
\[
m_{\sigma,t}(E)\leq m_{\sigma,t}(A)\delta_t(A,E).
\]
\item For a classical generator $G$ and a Fourier--Mukai endofunctor $\Phi$, one has
\[
h_{\sigma,t}(\Phi)=\limsup_{n\to\infty}\frac{1}{n}\log m_{\sigma,t}(\Phi^nG)\leq h_t(\Phi).
\]
\item If $\Phi\in\Auteq(X)$ and $\sigma\in\Stab_N(X)$, then $\log\rho(\numg{\Phi})\leq h_\sigma(\Phi)$.
\end{enumerate}
\end{thm}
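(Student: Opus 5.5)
The plan is to prove the three parts in order. Part (1) is a subadditivity statement for mass, part (2) follows from (1) applied twice, and part (3) follows from the support-property bound \eqref{equation: numerical norm bounded by mass} together with Gelfand's formula.

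\emph{Part (1).} The first step is to establish three elementary properties of $m_{\sigma,t}$. First, $m_{\sigma,t}(A[n])=e^{nt}m_{\sigma,t}(A)$, because shifting by $[n]$ moves every Harder--Narasimhan phase by $n$ and changes no $\abs{Z}$. Second, $m_{\sigma,t}(E)\leq m_{\sigma,t}(E\oplus E')$: the Harder--Narasimhan filtration of a direct sum is obtained by merging the filtrations of the summands, and equal-phase factors combine with additive central charges of the same argument, so the masses add. Third, for every exact triangle $A\to B\to C$ one has $m_{\sigma,t}(B)\leq m_{\sigma,t}(A)+m_{\sigma,t}(C)$. I expect this triangle inequality to be the main obstacle. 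I would prove it as in Ikeda: refine the Harder--Narasimhan filtrations of $A$ and $C$ into a filtration of $B$ whose factors are extensions of semistable pieces. Then compare the Harder--Narasimhan polygon of $B$ with the concatenation of the polygons of $A$ and $C$; the weighted mass is a convex length functional on such polygons. Concretely, I would induct on the number of Harder--Narasimhan factors, peeling off $\phi^+$ pieces and using $\Hom(\PC(\phi_1),\PC(\phi_2))=0$ for $\phi_1>\phi_2$. Given these three facts, take any tower as in \cref{definition: categorical complexity and entropy} with cones $A[n_i]$. Applying the triangle inequality repeatedly gives
\[
m_{\sigma,t}(E)\leq m_{\sigma,t}(E\oplus E')\leq\sum_i e^{n_it}m_{\sigma,t}(A).
\]
Taking the infimum over towers yields (1).

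\emph{Part (2).} Fix $0\neq E$. Since $\Phi$ is exact, applying $\Phi^n$ to a tower building $E\oplus E'$ from shifts of $G$ gives a tower building $\Phi^nE\oplus\Phi^nE'$ from shifts of $\Phi^nG$. Hence $\delta_t(\Phi^nG,\Phi^nE)\leq\delta_t(G,E)$, and part (1) gives
\[
m_{\sigma,t}(\Phi^nE)\leq m_{\sigma,t}(\Phi^nG)\,\delta_t(G,E).
\]
The factor $\delta_t(G,E)$ is finite and independent of $n$, so it disappears after taking $\frac1n\log$ and $\limsup$. The supremum in \cref{definition: mass growth} is therefore attained at $E=G$. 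For the inequality with $h_t$, apply (1) with $A=G$ to get $m_{\sigma,t}(\Phi^nG)\leq m_{\sigma,t}(G)\,\delta_t(G,\Phi^nG)$. Taking $\frac1n\log$ and using that the limit defining $h_t(\Phi)$ exists (\cref{theorem: categorical entropy and total Hom comparison}) gives $h_{\sigma,t}(\Phi)\leq h_t(\Phi)$.

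\emph{Part (3).} Set $t=0$. The classes of objects span $N(X)_{\RB}$, so I choose objects $E_1,\dots,E_k$ whose classes form a basis. For the operator norm this gives
\[
\norm{(\numg{\Phi})^n}\leq C'\max_j\norm{(\numg{\Phi})^n[E_j]}=C'\max_j\norm{[\Phi^nE_j]}\leq C'C(\sigma)\max_j m_\sigma(\Phi^nE_j),
\]
where the last step is \eqref{equation: numerical norm bounded by mass}. By Gelfand's formula, $\log\rho(\numg{\Phi})=\lim\frac1n\log\norm{(\numg{\Phi})^n}$. This is therefore at most $\max_j\limsup\frac1n\log m_\sigma(\Phi^nE_j)\leq h_\sigma(\Phi)$.
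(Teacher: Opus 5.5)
Your proposal is correct and reconstructs Ikeda's argument, which is all the paper offers for this statement since it cites it without proof: the weighted triangle inequality applied along towers gives (1), applying the exact functor $\Phi^n$ to towers gives (2), and the support-property bound combined with Gelfand's formula gives (3). The one ingredient you do not actually establish is the weighted triangle inequality $m_{\sigma,t}(B)\leq m_{\sigma,t}(A)+m_{\sigma,t}(C)$: your polygon sketch only makes sense when all phases lie in a window of width less than $1$, and for $t\neq0$ it would need an additional estimate, so for that step you should cite Ikeda's proof rather than rely on the sketch.
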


\subsection{Mass--Hom bounds}\label{subsection: mass Hom bounds}
We recall the mass--Hom bound introduced by Halpern-Leistner and Robotis.

\begin{dfn}[{\cite[Definition 2.7]{preprint_daniel_antoniosalexandros_2025_the_space_of_augmented_stability_conditions}}]\label{definition: mass Hom bound}
A stability condition $\sigma$ on $D^b(X)$ satisfies a \emph{mass--Hom bound} if, for every $A\in D^b(X)$, there is a constant $C(A,\sigma)>0$ such that
\[
\hom(A,E)\leq C(A,\sigma)m_\sigma(E)
\]
for every $E\in D^b(X)$. We set
\[
\MH_X\coloneqq\{\sigma\in\Stab_N(X)\mid\sigma\text{ satisfies a mass--Hom bound}\}.
\]
\end{dfn}


\begin{prop}[{\cite[Proposition 2.13 and Corollary 2.14]{preprint_daniel_antoniosalexandros_2025_the_space_of_augmented_stability_conditions}}]\label{proposition: total Hom mass bound and deformation}
Let $\sigma$ be a stability condition on $D^b(X)$ satisfying a mass--Hom bound. For every $A\in D^b(X)$, there is a constant $C(A,\sigma)>0$ such that
\[
\sum_{i\in\ZZ}\hom^i(A,E)\leq C(A,\sigma)m_\sigma(E)
\]
for every $E\in D^b(X)$. Every stability condition in the same connected component as $\sigma$ also satisfies a mass--Hom bound.
\end{prop}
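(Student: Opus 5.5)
The plan is to treat the two assertions separately. The first comes from reducing to semistable objects plus a finiteness argument for the shifts. The second comes from a local comparison of masses for nearby stability conditions.

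\emph{Total Hom bound.} Note first that $\hom^i(A,E)=\hom(A[-i],E)$. The mass--Hom bound therefore controls each single degree, but with a constant $C(A[-i],\sigma)$ depending on $i$. So the task is to show that only finitely many $i$, independent of $E$, contribute.
\begin{enumerate}
\item Take the Harder--Narasimhan filtration of $E$ with factors $E_1,\dots,E_\ell$. The long exact sequences give $\sum_i\hom^i(A,E)\leq\sum_j\sum_i\hom^i(A,E_j)$. Since $m_\sigma(E)=\sum_j m_\sigma(E_j)$, it suffices to treat a semistable $E$.
\item Both sides are invariant under shifting $E$, so we may assume $E\in\PC(\phi)$ with $\phi\in(0,1]$.
\item We have $\Hom(A,E[i])=0$ whenever $\phi^-_\sigma(A)>\phi+i$, so the degrees $i<\phi^-_\sigma(A)-1$ do not contribute.
\item By Serre duality, $\Hom(A,E[i])\cong\Hom(E[i],S(A))^*$ with $S=(-)\otimes\omega_X[\dim X]$. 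This vanishes once $\phi+i>\phi^+_\sigma(S(A))$, so the degrees $i>\phi^+_\sigma(S(A))$ do not contribute.
\end{enumerate}
Hence only $i$ in a finite window $I_A$ contribute, and $I_A$ depends only on $A$ and $\sigma$. We may then take $C(A,\sigma)=\sum_{i\in I_A}C(A[-i],\sigma)$.

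\emph{Deformation.} I would show that the locus of stability conditions satisfying a mass--Hom bound is open and closed in $\Stab_N(X)$. The key lemma is a local comparison: for each $\sigma$ there are a neighbourhood $U$ and $K>0$ with $m_\sigma(E)\leq K\,m_\tau(E)$ for all $\tau\in U$ and all $E$.

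To prove the lemma, take $\tau$ at Bridgeland distance $<\epsilon$ from $\sigma$, and let $F$ be a $\tau$-semistable object.
\begin{enumerate}
\item All $\sigma$-Harder--Narasimhan factors of $F$ have phases in an interval of length $<2\epsilon$.
\item For $\epsilon$ small, their central charges are nearly aligned, so $m_\sigma(F)\leq 2\abs{Z_\sigma(F)}$.
\item The support property for $\tau$ gives $\abs{Z_\sigma(F)-Z_\tau(F)}\leq\norm{Z_\sigma-Z_\tau}\,\norm{[F]}\leq\norm{Z_\sigma-Z_\tau}\,C(\tau)\abs{Z_\tau(F)}$, hence $\abs{Z_\sigma(F)}\leq K'\abs{Z_\tau(F)}$.
\end{enumerate}
For a general $E$, subadditivity of $m_\sigma$ over the $\tau$-Harder--Narasimhan filtration then gives $m_\sigma(E)\leq K\,m_\tau(E)$.

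Given the lemma:
\begin{itemize}
\item \emph{Openness.} If $\sigma$ satisfies the bound, then $\hom(A,E)\leq C(A,\sigma)\,m_\sigma(E)\leq C(A,\sigma)K\,m_\tau(E)$ for $\tau$ near $\sigma$.
\item \emph{Closedness.} If $\tau$ satisfies the bound and $\sigma$ lies near $\tau$, run the same comparison with the roles exchanged to get $m_\tau(E)\leq K\,m_\sigma(E)$, and hence the bound for $\sigma$.
\end{itemize}
Connectedness of the component then finishes the argument.

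The main obstacle is making the local comparison uniform, since the constant $C(\tau)$ must be controlled for $\tau$ near $\sigma$. I would handle this by choosing the neighbourhood inside the standard Bridgeland chart, where the support property holds with a uniform constant (Bridgeland's deformation theorem).
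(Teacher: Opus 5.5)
Your proof is correct. The paper does not prove this statement itself; it is quoted from Halpern-Leistner--Robotis. Your first half is the same phase-window argument the paper runs in \cref{proposition: weighted total Hom mass bound}: orthogonality bounds the contributing degrees from below, Serre duality with $A\otimes\omega_X[\dim X]$ bounds them from above, and one then sums over Harder--Narasimhan factors. The paper takes the unweighted total-Hom bound as input at that point. You instead derive it by applying the single-degree mass--Hom bound to the finitely many shifts $A[-i]$, $i\in I_A$, which is exactly what is needed.

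For the deformation half, the uniformity issue you flag as the main obstacle can be avoided entirely. In your local comparison lemma, use the support property of the fixed centre $\sigma$ rather than of $\tau$. For $\tau$-semistable $F$, your step (2) and \eqref{equation: numerical norm bounded by mass} give $\norm{[F]}\leq C(\sigma)m_\sigma(F)\leq 2C(\sigma)\abs{Z_\sigma(F)}$. Hence $\abs{Z_\sigma(F)}\leq\abs{Z_\tau(F)}+2C(\sigma)\norm{Z_\sigma-Z_\tau}\abs{Z_\sigma(F)}$, and the last term is absorbed once $\norm{Z_\sigma-Z_\tau}$ is small. The reverse inequality $m_\tau\leq K\,m_\sigma$, obtained by working with $\sigma$-semistable objects, uses only $C(\sigma)$ from the outset.

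For closedness, this two-sided comparison should be centred at the limit point $\sigma$ and applied to an approximating $\tau$ in its neighbourhood. It should not be centred at $\tau$, whose neighbourhood might shrink as $\tau\to\sigma$. Read this way, your argument is complete.

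Alternatively, the comparison is already built into Bridgeland's generalized metric $d$, whose definition contains $\sup_{E\neq0}\abs{\log(m_\tau(E)/m_\sigma(E))}$. The locus where $d(\sigma,-)<\infty$ is open and closed, so it contains the whole component. Then $\hom(A,E)\leq C(A,\sigma)e^{d(\sigma,\tau)}m_\tau(E)$ gives the bound for every $\tau$ in the component in one step.
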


Thus, $\MH_X$ is a union of connected components. It is also invariant under $\Auteq(X)$. Indeed, for $\sigma\in\MH_X$ and $\Phi\in\Auteq(X)$, one has $\hom(A,E)=\hom(\Phi^{-1}A,\Phi^{-1}E)\leq C(\Phi^{-1}A,\sigma)m_{\Phi\sigma}(E)$ by \eqref{equation: covariance of mass}.

In our setting, the existence of numerical stability conditions satisfying a mass--Hom bound follows from \cite{preprint_chunyi_zhiyu_ziqi_emanuele_alexander_paolo_xiaolei_2026_stability_conditions_and_moduli_spaces_on_projective_families,preprint_yiran_2026_a_remark_on_the_full_support_property}.
\begin{thm} [\cite{preprint_chunyi_zhiyu_ziqi_emanuele_alexander_paolo_xiaolei_2026_stability_conditions_and_moduli_spaces_on_projective_families,preprint_yiran_2026_a_remark_on_the_full_support_property}]\label{theorem: existence of numerical mass Hom stability conditions}
For every smooth projective variety $X$, one has $\MH_X\ne\varnothing$.
\end{thm}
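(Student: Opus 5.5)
The plan is to assemble the statement from three inputs rather than construct stability conditions from scratch: existence of stability conditions, upgrading the support property to the full lattice $N(X)$, and verifying a mass--Hom bound at one point. By \cref{proposition: total Hom mass bound and deformation}, the last property propagates to the whole connected component.

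\textbf{Step 1 (existence).} First I will invoke the construction of Li--Liu--Liu--Macr\`i--Perry--Stellari--Zhao. It produces, for every smooth projective $X$, a stability condition $\sigma_0=(Z_0,\PC_0)$ on $D^b(X)$. Its heart is obtained by iterated tilting of $\mathrm{Coh}(X)$ with respect to a polarization $H$, and its central charge factors through a finite-rank lattice $\Lambda$ built from the truncated Chern characters $H^{\dim X-k}\ch_k$. Its support property holds with respect to $\Lambda$.

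\textbf{Step 2 (full numerical support).} Next I will apply Cheng's remark to replace $\Lambda$ by the full numerical lattice $N(X)$. The idea is to deform $Z_0$ to a central charge $Z\colon N(X)\to\CB$ by adding a small perturbation. This perturbation is controlled by a quadratic form that is negative definite on $\ker(N(X)_\RB\to\Lambda_\RB)$ and nonnegative on semistable classes. Bridgeland's deformation theorem then yields $\sigma\in\Stab_N(X)$ close to $\sigma_0$, with the same semistable objects up to small phase changes.

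\textbf{Step 3 (mass--Hom bound).} It remains to check \cref{definition: mass Hom bound} for this $\sigma$. I will make two reductions.

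First, by the argument of \cref{lemma: fixed objects and total Hom growth}, resolving a fixed $A$ by shifts and summands of Orlov's generator $G=\bigoplus_{j=0}^{d}L^{\otimes(j+k)}$ from \cref{theorem: existence of generator} gives $\hom(A,E)\leq C\sum_i\hom^i(G,E)$. So it suffices to treat $A=L^{\otimes m}$ with $m$ ranging over a finite set.

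Second, the long exact sequences along the Harder--Narasimhan filtration of $E$ give $\sum_i\hom^i(L^{\otimes m},E)\leq\sum_j\sum_i\hom^i(L^{\otimes m},A_j)$. So it suffices to prove
\[
\textstyle\sum_i\hom^i(L^{\otimes m},A)\leq C\,\abs{Z(A)}
\]
for $\sigma$-semistable $A$. Because shifts preserve both sides, we may assume the phase of $A$ lies in $(0,1]$, so $A$ lies in the tilted heart.

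For such $A$, the cohomology sheaves $\mathcal H^{p}(A)$ are nonzero only in a fixed range of $p$. Also, $\sum_i\hom^i(L^{\otimes m},A)$ is bounded by $\sum_{p,q}h^q(\mathcal H^{p}(A)\otimes L^{-m})$.

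\textbf{Main obstacle.} The main obstacle is bounding these sheaf cohomologies linearly in $\norm{[A]}$. Together with the support property $\norm{[A]}\leq C(\sigma)\abs{Z(A)}$, such a bound would close the argument. My first attempt will use the torsion-pair structure of the iterated tilt: each $\mathcal H^{p}(A)$ is an extension of tilt-semistable pieces with slopes in controlled ranges. For such pieces, Le Potier--Simpson type estimates give $h^0\leq C\cdot(\text{rank}+\abs{\text{degree}})$, with higher $h^q$ handled by Serre duality. The delicate point is that Chern-character components of the individual pieces need not be bounded by $\norm{[A]}$, since cancellation can occur. I expect to control this using the Bogomolov-type inequalities that underlie the support property in Step 1. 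If that fails, I would instead appeal to Halpern-Leistner--Robotis's criterion that stability conditions arising from such tilting constructions satisfy a mass--Hom bound. Finally, \cref{proposition: total Hom mass bound and deformation} extends the bound from the geometric point to the whole component, which gives $\MH_X\neq\varnothing$.
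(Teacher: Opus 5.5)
\textbf{The gap is the mass--Hom bound itself.} The paper gives no argument for this statement; it quotes it from Li--Liu--Liu--Macr\`i--Perry--Stellari--Zhao and Cheng. Your proposal has a genuine gap exactly at the substantive step. Steps 1 and 2 are citations. The reductions at the start of Step 3 are fine: to a fixed generator as in \cref{lemma: fixed objects and total Hom growth}, to semistable objects via Harder--Narasimhan filtrations, and to phases in $(0,1]$ by shift invariance. But what remains is
\[
\sum_{p,q}h^q\bigl(\mathcal H^p(A)\otimes L^{-m}\bigr)\leq C\norm{[A]}\quad\text{for all $\sigma$-semistable $A$ in the iterated tilt,}
\]
and this is the entire content of the theorem. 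You do not prove it.

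The tools you propose do not address the obstruction you correctly identify. Bogomolov-type inequalities are quadratic constraints on the class of the semistable object $A$ itself; that is what yields the support property. They say nothing about the classes of the cohomology sheaves $\mathcal H^p(A)$, or of their slope Harder--Narasimhan factors. Those classes are not a priori bounded linearly in $\norm{[A]}$, precisely because of cancellation. Le Potier--Simpson bounds $h^0$ of a slope-semistable sheaf in terms of its rank and slope, so it helps only once those ranks and slopes are controlled. Moreover, the torsion pairs bound slopes from one side only.

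Your fallback, a Halpern-Leistner--Robotis criterion for tilting constructions, is not where the result comes from. The paper uses Halpern-Leistner--Robotis only for the definition, for the total-Hom upgrade and invariance along components (\cref{proposition: total Hom mass bound and deformation}), and for the $\PP1$ example. Nonemptiness of $\MH_X$ for arbitrary $X$ is credited to the two cited preprints. As written, the mass--Hom bound is either missing or deferred to an unidentified reference.

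\textbf{Steps 2 and 3 are not independent.} The natural order is the reverse of yours. The quadratic form you postulate in Step 2 is the full-lattice support property itself. It would also have to be negative definite on all of $\ker Z_0\subset N(X)_\RB$, not only on $\ker(N(X)_\RB\to\Lambda_\RB)$. You give no reason for its existence.

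A mass--Hom bound at $\sigma_0$ would supply it for free. For $\sigma_0$-semistable $E$ and any $A$, one has $\abs{\chi(A,E)}\leq\sum_i\hom^i(A,E)\leq C_A\abs{Z_0(E)}$. Choose $A_1,\ldots,A_n$ whose classes span $N(X)_\RB$. Nondegeneracy of $\chi$ on $N(X)$ then gives $\norm{[E]}\leq C\abs{Z_0(E)}$. So $\sigma_0$, with its central charge regarded as a function on $N(X)$, already lies in $\MH_X$. No deformation is needed. No propagation along the component is needed either, since a single point suffices.

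The right skeleton is therefore two steps. First, obtain one stability condition satisfying a mass--Hom bound; this is the genuine input, and it must come from the cited works rather than from the estimate you sketch. Second, deduce full numerical support formally as above.
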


\subsection{Abelian varieties}
\label{subsection: abelian varieties and point objects}
In this subsection, we review basic facts on abelian varieties and fix some notaion; see \cite{book_birkenhake_lange_2004_complex_abelian_varieties} for more detail. 
For an abelian variety $A$, we write $\widehat A=\Pic^0(A)$ for its dual abelian variety and $t_a:A\to A$, $x\mapsto x+a$, for translation by $a\in A$.
For each integer $m>0$, put $A[m]\coloneqq\Ker([m]_A:A\to A)$, where $[m]_A(x)=mx$.
We denote by $\PC_A$ the normalized Poincar\'e line bundle on $A\times\widehat A$.
For $\alpha\in\widehat A$, put $P_\alpha=\PC_A|_{A\times\{\alpha\}}$.

\begin{prop}[{\cite{book_birkenhake_lange_2004_complex_abelian_varieties}}]
\label{proposition: basic facts about abelian varieties}
Let $A$ be an abelian variety and let $Y$ be a smooth projective variety.
Write $\pi_Y:A\times Y\to Y$ for the projection.
\begin{enumerate}
\item For every morphism $f:A\to B$ to an abelian variety $B$, there is a unique homomorphism $u:A\to B$ such that $f=t_{f(0)}\circ u$.
Moreover, $A[m]$ is finite for every integer $m>0$.

\item Cup product identifies $H^k(A,\CB)$ with $\bigwedge^kH^1(A,\CB)$, and the tangent bundle of $A$ is trivial.
Thus, inversion $-\id_A:A\to A$, $x\mapsto-x$, acts by $(-1)^k$ on $H^k(A,\CB)$ and $\td(A)=1$.

\item For $L\in\Pic(A)$, the map $\phi_L:A\to\widehat A$ defined by $a\mapsto t_a^*L\otimes L^{-1}$ is a homomorphism.
It is zero if and only if $L\in\Pic^0(A)$.

\item If $N\in\Pic(A\times Y)$ satisfies $N|_{A\times\{y\}}\in\Pic^0(A)$ for every $y\in Y$, then there are a morphism $g:Y\to\widehat A$ and a line bundle $M\in\Pic(Y)$ such that
\[
N\cong(\id_A\times g)^*\PC_A\otimes\pi_Y^*M.
\]

\item Fix a point $y_0\in Y$, and let $a_Y:Y\to\Alb(Y)$ be the Albanese morphism with $a_Y(y_0)=0$.
For every morphism $f:Y\to B$ to an abelian variety $B$, there is a unique homomorphism $u:\Alb(Y)\to B$ such that
\[
f=t_{f(y_0)}\circ u\circ a_Y.
\]
Moreover, $a_Y^*:\Pic^0(\Alb(Y))\to\Pic^0(Y)$ is an isomorphism.
\end{enumerate}
\end{prop}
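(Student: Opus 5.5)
This proposition is a list of standard facts about abelian varieties, so the plan is to derive each item from the classical theory in \cite{book_birkenhake_lange_2004_complex_abelian_varieties}, mostly by rigidity, the seesaw principle and the universal properties of $\widehat A$ and $\Alb$.

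For (1), put $u=t_{-f(0)}\circ f$. Then $u(0)=0$, and the rigidity lemma applied to $(x,y)\mapsto u(x+y)-u(x)-u(y)$ on $A\times A$ shows that this map is constant, hence zero. So $u$ is a homomorphism, and it is unique because $u(0)=0$ forces it. Finiteness of $A[m]$ follows from writing $A=V/\Lambda$ with $\Lambda$ a lattice of rank $2g$; then $A[m]=\tfrac1m\Lambda/\Lambda\cong(\ZZ/m)^{2g}$.

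For (2), the complex torus structure gives $H^*(A,\ZZ)\cong\bigwedge^*H^1(A,\ZZ)$ via cup product. Translation-invariant vector fields trivialize $T_A$, so every Chern class vanishes and $\td(A)=1$. Inversion acts by $-1$ on $H^1$, hence by $(-1)^k$ on $\bigwedge^k H^1$.

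For (3), the theorem of the square says exactly that $\phi_L$ is a homomorphism. Moreover $\phi_L=0$ if and only if $L$ is translation invariant, which is one characterization of $\Pic^0(A)$.

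For (4), the hypothesis says that $N$ is a family of line bundles in $\Pic^0(A)$ parametrized by $Y$. Normalize by $N|_{\{0\}\times Y}$. By the universal property of the Poincar\'e bundle there is a morphism $g:Y\to\widehat A$ with $N\otimes\pi_Y^*(N|_{\{0\}\times Y})^{-1}\cong(\id\times g)^*\PC_A$; the seesaw principle gives this isomorphism. Put $M=N|_{\{0\}\times Y}$.

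For (5), this is the universal property of the Albanese. Translate $f$ so that $y_0\mapsto0$, which produces the homomorphism $u$. The identification $\Pic^0(\Alb Y)\cong\Pic^0(Y)$ is the standard duality $\Alb(Y)=\Pic^0(Y)^\vee$, i.e.\ $\widehat{\Alb(Y)}\cong\Pic^0_{\mathrm{red}}(Y)$ via $a_Y^*$. I expect this last item, the Albanese duality, to be the only nontrivial step. I would verify it analytically: $H^1(\Alb Y)\cong H^1(Y)$ under $a_Y^*$ because $\Alb(Y)=H^0(\Omega^1_Y)^*/H_1(Y,\ZZ)_{\mathrm{free}}$. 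Then $a_Y^*$ induces an isomorphism on $H^1(\OO)/H^1(\ZZ)$, and these quotients are $\Pic^0(\Alb Y)$ and $\Pic^0(Y)$.
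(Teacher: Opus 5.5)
Your proposal is correct and takes the same route as the paper, which gives no argument of its own and simply cites these items as standard facts from Birkenhake--Lange. Your sketches are exactly the textbook arguments behind that citation:
\begin{itemize}
\item rigidity for (1);
\item the lattice and exterior-algebra description of a complex torus for (2);
\item the theorem of the square for (3);
\item normalizing along $\{0\}\times Y$ and invoking the universal property of $\PC_A$ for (4);
\item the Albanese universal property, together with $a_Y^*$ inducing isomorphisms on $H^1(-,\ZZ)$ and $H^1(-,\OO)$, for (5).
\end{itemize}
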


Translations and tensoring by elements of $\Pic^0(A)$ act trivially on $N(A)$.

We call an object \emph{simple} if $\End(E)=\CB$ holds.
\begin{dfn}[{\cite[Section 14.9]{book_birkenhake_lange_2004_complex_abelian_varieties}}]\label{definition: semihomogeneous vector bundle}
A vector bundle $E$ on $A$ is \emph{semihomogeneous} if, for every $a\in A$, there exists $\alpha\in\widehat A$ such that $t_a^*E\cong E\otimes P_\alpha$.
\end{dfn}

Every line bundle is simple and semihomogeneous. We will use the following stability result of Fu--Li--Zhao.

\begin{thm}[{\cite[Theorem 2.13 and Corollaries 2.15, 2.16]{fu_li_zhao_2022_stability_manifolds_of_varieties_with_finite_albanese_morphisms}}]\label{theorem: universal stability for finite Albanese morphisms}
\begin{enumerate}
\item If $a_X$ is finite, then all skyscraper sheaves are stable of the same phase with respect to every $\sigma\in\Stab_N(X)$.
\item Every simple semihomogeneous vector bundle on an abelian variety $A$ is stable with respect to every $\sigma\in\Stab_N(A)$.
\end{enumerate}
\end{thm}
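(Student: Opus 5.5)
The plan is to prove part (1) first, for abelian varieties and then for varieties with finite Albanese morphism. Part (2) will then follow from (1) by transporting skyscraper sheaves along Fourier--Mukai equivalences.

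\textbf{Step 1: $\Pic^0$ and translations act trivially on $\Stab_N$.} Let $X$ have finite Albanese morphism $a_X$. The connected group $\Pic^0(X)$ (and, for $X=A$ abelian, also $A$ acting by translations) acts on $\Stab_N(X)$, and this action fixes central charges, since it is trivial on $N(X)$. The forgetful map $\Stab_N(X)\to\Hom(N(X),\CB)$ is a local homeomorphism, so its fibres are discrete. Each orbit of a connected group is connected and lies in a single fibre, hence is a point. Therefore every $\sigma\in\Stab_N(X)$ satisfies $\tensfunc_{P}\sigma=\sigma$ for every $P\in\Pic^0(X)$. Since Harder--Narasimhan filtrations are unique up to isomorphism and $k(x)\otimes P\cong k(x)$, every HN factor $F$ of $k(x)$ satisfies $F\otimes P\cong F$ for all $P\in\Pic^0(X)$. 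The same holds for every Jordan--H\"older factor, after a short argument using the finiteness of the set of JH factors and the connectedness of $\Pic^0$.

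\textbf{Step 2: invariant objects are supported at points.} By the projection formula and \cref{proposition: basic facts about abelian varieties}(5), the object $a_{X*}F$ on $\Alb(X)$ is invariant under $\otimes P_\alpha$ for all $\alpha$. Applying the Fourier--Mukai transform $\Phi_{\PC}$, this invariance becomes invariance under all translations of the dual. A coherent sheaf invariant under all translations is a homogeneous vector bundle, hence an iterated extension of bundles $P_\beta$. Transforming back, each cohomology sheaf of $a_{X*}F$ is supported on a finite set. Because $a_X$ is finite, $F$ itself has zero-dimensional support.

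\textbf{Step 3: conclude that $k(x)$ is stable.} Each HN factor $F$ has $\Hom$ to or from $k(x)$, and one can split off the part of $F$ supported at $x$. From this I expect to show that every HN factor lies in the thick subcategory of objects supported at $x$. Its class in $N(X)$ is then an integer multiple of $[k(x)]$, so its phase is $\phi_0+\ZZ$, where $\phi_0$ is the phase of $Z([k(x)])$.

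The delicate point, and the one I expect to be the main obstacle, is ruling out a nontrivial HN filtration whose factors are point-supported complexes with phases differing by integers. For this I would use the Hom-vanishing axiom together with $\End(k(x))=\CB$ and the fact that the support property forbids positive-dimensional families of point-supported stable objects of bounded mass. The aim is to show that $k(x)$ has a single HN factor and a single JH factor. Then $k(x)$ is stable, and its phase, which lies in $\phi_0+\ZZ$ and varies continuously in $x$, is constant.

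\textbf{Step 4: part (2).} By Mukai's theory, a simple semihomogeneous bundle $E$ on $A$ is a fibre of a universal family over a moduli space $A'$, which is an abelian variety isogenous to $A$ or $\widehat A$. The associated Fourier--Mukai functor is an equivalence $\Psi:D^b(A')\to D^b(A)$ with $\Psi(k(y))\cong E$. This $\Psi$ induces a homeomorphism $\Stab_N(A')\to\Stab_N(A)$ compatible with semistability. Given $\sigma\in\Stab_N(A)$, apply part (1) on $A'$ to $\Psi^{-1}\sigma$: the sheaf $k(y)$ is $\Psi^{-1}\sigma$-stable, so $E$ is $\sigma$-stable.
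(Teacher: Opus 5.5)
The paper does not prove this statement; it imports it from Fu--Li--Zhao. Your outline is the natural one: $\Pic^0(X)$-invariance of $\sigma$, then finite support of $\Pic^0$-invariant objects as in \cref{lemma: Picard invariant objects have finite support}, then transport along a Mukai--Orlov equivalence for part (2). Steps 2 and 4 are fine. The genuine gap is Step 3, which is the actual content of part (1). There you only describe what you expect to show, and the tool you propose does not exist. The support property does not forbid positive-dimensional families of point-supported stable objects of bounded mass: the skyscrapers $\{k(x)\}_{x\in X}$ are such a family, and the theorem asserts exactly that they are stable. Tracking HN phases modulo $\ZZ$ will not, by itself, rule out a nontrivial filtration.

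The missing idea is to pass to stable factors and use the characterization of point objects, exactly as in the proof of \cref{lemma: Picard preserving equivalences are standard}.
\begin{itemize}
\item Let $S$ be a Jordan--H\"older factor of an HN factor of $k(x)$. By Step 1, $S\otimes P\cong S$ for all $P\in\Pic^0(X)$. For the permutation argument, use that $\Pic^0(X)$ is divisible and so has no proper finite-index subgroup; connectedness alone says nothing about an abstract homomorphism to a finite group.
\item Hence $S$ has finite support by Step 2. Since $S$ is stable, $\End(S)=\CB$ by Schur's lemma, so $S$ is indecomposable and supported at a single point $y$. Also $\Hom(S,S[i])=0$ for $i<0$ by Hom-vanishing.
\item By \cite[Lemma 4.5]{book_huybrechts_2006_fouriermukai_transforms_in_algebraic_geometry}, $S\cong k(y)[m]$ for some $m$.
\item Since $k(x)$ is an iterated extension of such factors and $\Hom(k(x),k(x))\neq0$, some factor is $k(x)[m]$. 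So a shift of $k(x)$ is $\sigma$-stable, hence $k(x)$ is. No phase bookkeeping is needed.
\end{itemize}

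Two further steps need justification. First, in Step 1, ``each orbit is connected'' presupposes that $P\mapsto\tensfunc_P\sigma$ is continuous from $\Pic^0(X)$ to $\Stab_N(X)$. This is not formal: it amounts to making $d(\PC,P\otimes\PC)$ small uniformly over all semistable objects. It is the genuinely nontrivial input, usually taken from Polishchuk's work on constant families of t-structures. Second, at the end of Step 3, continuity of phases concerns varying $\sigma$, not varying the object, so nothing you have shown makes $x\mapsto\phi_\sigma(k(x))$ continuous. For abelian varieties, constancy of the phase follows from translation-invariance of $\sigma$, obtained as in Step 1. Part (2) in any case only needs stability of skyscrapers on an abelian variety. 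For general $X$ with finite Albanese morphism, a separate argument is required for ``same phase''.
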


\section{Equality of mass growth and categorical entropy}\label{section: equality of mass growth and categorical entropy}

In this section, we prove \cref{theorem in intro: comparison of entropy and mass growth}.
We first extend the total Hom estimate in \cref{proposition: total Hom mass bound and deformation} to arbitrary real weights.

\begin{prop}\label{proposition: weighted total Hom mass bound}
Let $\sigma$ be a stability condition on $D^b(X)$ satisfying a mass--Hom bound.
For every $F\in D^b(X)$ and $t\in\RB$, there is a constant $C(F,\sigma,t)>0$ such that for every $E\in D^b(X)$,
\[
\sum_{i\in\ZZ}\hom^i(F,E)e^{-it}\leq C(F,\sigma,t)m_{\sigma,t}(E).
\]
\end{prop}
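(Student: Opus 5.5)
We reduce to the unweighted total Hom bound of \cref{proposition: total Hom mass bound and deformation} by decomposing $E$ into its Harder--Narasimhan factors. The local phase control comes from Hom vanishing between slices.

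\textbf{Reduction to semistable objects.} Let $E$ have Harder--Narasimhan factors $A_1,\dots,A_\ell$ with phases $\phi_1>\dots>\phi_\ell$. The long exact sequences of $\Hom(F,-)$ applied to the HN triangles give
\[
\hom^i(F,E)\le\sum_j\hom^i(F,A_j)
\]
for every $i$. Multiplying by $e^{-it}$ and summing over $i$, it suffices to find $C$ with
\[
\sum_i\hom^i(F,A)e^{-it}\le C\abs{Z(v(A))}e^{t\phi}
\]
for every $\sigma$-semistable $A$ of phase $\phi$. Then summing over $j$ yields $C\,m_{\sigma,t}(E)$.

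\textbf{Phase window.} Since $F$ is fixed, it has finitely many HN factors, with phases in $[\phi^-_\sigma(F),\phi^+_\sigma(F)]$. Let $A$ be semistable of phase $\phi$. By axiom (3) together with Serre duality,
\[
\Hom(F,A[i])=\Ext^{\dim X-i}(A,F\otimes\omega_X)^\vee .
\]
The factor $\Hom(F,A[i])$ vanishes when $\phi+i<\phi^-_\sigma(F)$. In the other direction it vanishes when $\phi+i-\dim X>\phi^+_\sigma(F\otimes\omega_X)$, because then no nonzero map $A[i-\dim X]\to F\otimes\omega_X$ exists. Hence $\hom^i(F,A)\neq0$ forces $\phi+i\in[a,b]$ for constants $a,b$ depending only on $F$ and $\sigma$. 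On that window,
\[
e^{-it}=e^{t\phi}e^{-t(\phi+i)}\le e^{t\phi}\max(e^{-ta},e^{-tb}).
\]

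\textbf{Conclusion.} Apply \cref{proposition: total Hom mass bound and deformation} to the semistable object $A$, whose mass is $\abs{Z(v(A))}$:
\[
\sum_i\hom^i(F,A)e^{-it}\le\max(e^{-ta},e^{-tb})\,e^{t\phi}\,C(F,\sigma)\abs{Z(v(A))}.
\]
This is the required bound with $C(F,\sigma,t)=\max(e^{-ta},e^{-tb})\,C(F,\sigma)$.

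\textbf{Main obstacle.} The delicate step is the phase window, specifically the upper bound on $\phi+i$. It needs Serre duality, and this is where smoothness and projectivity of $X$ enter, via the Serre functor $-\otimes\omega_X[\dim X]$. Both $F$ and $F\otimes\omega_X$ must have finite phase spread, which holds since both are fixed objects with finite HN filtrations.
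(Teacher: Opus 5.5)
Your proposal is correct and follows essentially the same route as the paper. The paper also handles the semistable case first: it confines $\phi+i$ to the window $[\phi^-_\sigma(F),\phi^+_\sigma(F\otimes\omega_X[\dim X])]$ via Serre duality and orthogonality of slices, then applies the unweighted total Hom bound, and finally sums over Harder--Narasimhan factors using subadditivity of $\hom^i(F,-)$. Your upper endpoint $\phi^+_\sigma(F\otimes\omega_X)+\dim X$ equals the paper's $b$, so even the constant is the same.
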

\begin{proof}
We may assume that $F\neq0$.
First, we show the statement for a $\sigma$-semistable object $E\in\PC(\phi)$.
Let $a\coloneqq \phi_\sigma^-(F)$, $b\coloneqq \phi_\sigma^+(F\otimes\omega_X[\dim X])$.
Note that if
\[
\hom(F, E[k]) = \hom(E[k], F\otimes\omega_X[\dim X])\neq0,
\]
then $a\leq \phi+k\leq b$ by the orthogonality of slicing, and therefore we have
\[
e^{-kt}=e^{t\phi}e^{-t(\phi+k)} \leq e^{t\phi}\max\{e^{-ta},e^{-tb}\}.
\]

Thus, there exists $C>0$ that depends only on $F$ and $\sigma$ such that
\begin{align*}
\sum_{i\in\ZZ}\hom^i(F, E)e^{-it}
&\le e^{t\phi}\max\{e^{-ta},e^{-tb}\}\sum_{i\in\ZZ}\hom^i(F, E)\\
&\le C\max\{e^{-ta},e^{-tb}\}\,m_{\sigma,t}(E),
\end{align*}
where the second inequality follows from \cref{proposition: total Hom mass bound and deformation}.

For a general $E\in D^b(X)$, take $\sigma$-semistable factors $P_1, \ldots, P_l$ of $E$.
As
\[
\sum_{i\in\ZZ}\hom^i(F, E)e^{-it} \le \sum_{j=1}^{l}\sum_{i\in\ZZ}\hom^i(F, P_j)e^{-it},
\]
the statement holds by the $\sigma$-semistable case.
\end{proof}

\begin{cor}\label{corollary: equality of entropy and mass growth}
Let $X$ be a smooth projective variety, and let $\sigma$ be a stability condition on $D^b(X)$ satisfying a mass--Hom bound.
For every classical generator $G$ of $D^b(X)$ and every $t\in\RB$, there are constants $c(G,\sigma,t),C(G,\sigma,t)>0$ such that
\[
c(G,\sigma,t)m_{\sigma,t}(E)
\leq\delta_t(G,E)
\leq C(G,\sigma,t)m_{\sigma,t}(E)
\]
for every $E\in D^b(X)$.
Consequently, for every Fourier--Mukai endofunctor $\Phi$ of $D^b(X)$, the following limit exists and satisfies
\[
h_t(\Phi)=h_{\sigma,t}(\Phi)
=\lim_{n\to\infty}\frac{1}{n}\log m_{\sigma,t}(\Phi^nG).
\]
\end{cor}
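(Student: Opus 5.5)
The two-sided estimate comes from combining earlier bounds; the statement about entropy then follows by taking logarithms along the orbit of $G$.

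\emph{Lower bound.} \Cref{theorem: mass estimates and generator formula}(1) with $A=G$ gives $m_{\sigma,t}(E)\leq m_{\sigma,t}(G)\,\delta_t(G,E)$. Since $G\neq0$, we have $m_{\sigma,t}(G)>0$, so we may take $c(G,\sigma,t)=m_{\sigma,t}(G)^{-1}$.

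\emph{Upper bound.} \Cref{theorem: categorical entropy and total Hom comparison}(1) gives
\[
a(G,t)\,\delta_t(G,E)\leq\sum_{i}\hom^i(G,E)e^{-it}.
\]
\Cref{proposition: weighted total Hom mass bound} with $F=G$ bounds the right-hand side by $C(G,\sigma,t)\,m_{\sigma,t}(E)$. Hence
\[
\delta_t(G,E)\leq a(G,t)^{-1}C(G,\sigma,t)\,m_{\sigma,t}(E).
\]
Neither step uses anything beyond the stated results.

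\emph{Entropy.} Put $E=\Phi^nG$ in the two-sided estimate and take $\frac1n\log$. The constants contribute $O(1/n)$, which tends to $0$. By \cref{definition: categorical complexity and entropy} together with \cref{theorem: categorical entropy and total Hom comparison}(2), the limit $\lim_{n}\frac1n\log\delta_t(G,\Phi^nG)$ exists and equals $h_t(\Phi)$; here both generators are taken to be $G$, which is allowed because the limit is independent of the choice of generators. Squeezing $\frac1n\log m_{\sigma,t}(\Phi^nG)$ between two sequences with this common limit shows that $\lim_{n}\frac1n\log m_{\sigma,t}(\Phi^nG)$ exists and equals $h_t(\Phi)$. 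Finally, \cref{theorem: mass estimates and generator formula}(2) identifies $h_{\sigma,t}(\Phi)$ with the $\limsup$ of the same sequence, and a $\limsup$ equals the limit whenever the limit exists.

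\emph{The value $-\infty$.} The only delicate point is the case $h_t(\Phi)=-\infty$, or $\Phi^nG=0$ for some $n$, which can happen for non-invertible endofunctors. In that case both quantities are $0$ or tend to $-\infty$ at the same rate, because the two-sided estimate holds with constants independent of $n$. So the squeeze argument still goes through with the convention $\log 0=-\infty$, and nothing further is needed.
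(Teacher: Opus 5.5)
Your proposal is correct and matches the paper's proof: the lower bound uses \cref{theorem: mass estimates and generator formula}(1) with $c(G,\sigma,t)=m_{\sigma,t}(G)^{-1}$, and the upper bound combines \cref{theorem: categorical entropy and total Hom comparison}(1) with \cref{proposition: weighted total Hom mass bound}. The entropy statement then follows by the same bounded-difference squeeze along $\Phi^nG$, with the case $\Phi^nG=0$ treated separately, and the generator formula converts the $\limsup$ into the limit.
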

\begin{proof}
Fix $G$ and $t$.
By \cref{theorem: mass estimates and generator formula}, the lower bound holds with $c(G,\sigma,t)=m_{\sigma,t}(G)^{-1}$.
On the other hand, \cref{theorem: categorical entropy and total Hom comparison} gives a constant $a(G,t)>0$ such that
\[
\delta_t(G,E)
\leq\frac{1}{a(G,t)}\sum_{i\in\ZZ}\hom^i(G,E)e^{-it}
\]
for every $E\in D^b(X)$.
Combining this with \cref{proposition: weighted total Hom mass bound} gives the upper bound.

Let $\Phi$ be a Fourier--Mukai endofunctor of $D^b(X)$.
If $\Phi^nG=0$ for some $n$, all subsequent iterates vanish, and both growth rates are $-\infty$ by \cref{theorem: categorical entropy and total Hom comparison,theorem: mass estimates and generator formula}.
Otherwise, applying the two-sided comparison to $E=\Phi^nG$ shows that
$\log\delta_t(G,\Phi^nG)-\log m_{\sigma,t}(\Phi^nG)$ is bounded independently of $n$.
Since $\displaystyle \frac{\log\delta_t(G,\Phi^nG)}{n}$ converges to $h_t(\Phi)$, the limit $\displaystyle\lim_{n\to\infty}\frac{\log m_{\sigma,t}(\Phi^nG)}{n}$ also exists and equals $h_t(\Phi)$.
The generator formula in \cref{theorem: mass estimates and generator formula} identifies this limit with $h_{\sigma,t}(\Phi)$.
\end{proof}

\section{The orbit escape principle}\label{section: orbit escape principle}

We now compare the mass growth of an autoequivalence with its numerical action. Throughout this section, fix a norm on $N(X)_{\RB}$ and the associated operator norm.

\begin{dfn}\label{definition: semistability locus and return times}
Let $\GC=\{G_1,\ldots,G_r\}$ be a finite generating set of $D^b(X)$.
We define
\[
\Sigma_{\GC}\coloneqq
\{\tau\in\Stab_N(X)\mid G_i\text{ is }\tau\text{-semistable for all }i\}
\]
and call it the \emph{$\GC$-semistable locus}.
The locus on which all $G_i$ are stable is denoted by $\Sigma_{\GC}^{\mathrm s}$.
\end{dfn}

The objects in $\GC$ need not have the same phase. We put $G=\bigoplus_{i=1}^rG_i$ when applying the generator formula for mass growth. By the continuity of extremal phases, $\Sigma_{\GC}$ is closed, whereas $\Sigma_{\GC}^{\mathrm s}$ is open. Either locus may be empty.
For every $\sigma\in\Stab_N(X)$ and classical generator $G$, there is a finite generating set $\GC$ such that $\sigma\in\Sigma_\GC$. For example, $\GC=\{G_1, \dots G_l\}$ satisfies the condition, where $G_1, \ldots, G_l$ are the $\sigma$-semistable factors of $G$.

The following estimate follows from the support property and Ikeda's complexity estimate in \cref{theorem: mass estimates and generator formula}.

\begin{lem}\label{lemma: numerical operator norm bounded by mass}
For $\sigma\in\Stab_N(X)$ and a classical generator $G$, there is $c(G,\sigma)>0$ such that, for every $\Phi\in\Auteq(X)$ and every $n\geq0$,
\[
c(G,\sigma)\norm{(\numg{\Phi})^n}
\leq m_\sigma(\Phi^nG).
\]
\end{lem}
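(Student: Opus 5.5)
We bound the operator norm of $(\numg{\Phi})^n$ by evaluating it on finitely many numerical classes that span $N(X)_{\RB}$, and we realize those classes by objects whose masses after applying $\Phi^n$ are controlled by $m_\sigma(\Phi^nG)$.

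First, since $G$ is a classical generator, the classes of objects in $\thcl{G}=D^b(X)$ span $N(X)$. Hence we can fix finitely many objects $E_1,\ldots,E_s\in D^b(X)$ whose classes $[E_1],\ldots,[E_s]$ span $N(X)_{\RB}$; for instance, take a basis of $N(X)_{\RB}$ realized by classes of objects. Every $v\in N(X)_{\RB}$ can then be written as $v=\sum_j a_j(v)[E_j]$ with coefficients depending linearly on $v$. Choosing a linear section of the spanning map gives a constant $K>0$, depending only on the $E_j$ and the norm, with $\sum_j\abs{a_j(v)}\leq K\norm{v}$. Consequently
\[
\norm{(\numg{\Phi})^nv}\leq K\norm{v}\max_j\norm{(\numg{\Phi})^n[E_j]},
\]
so $\norm{(\numg{\Phi})^n}\leq K\max_j\norm{[\Phi^nE_j]}$.

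Second, we bound each $\norm{[\Phi^nE_j]}$ by a mass. By \eqref{equation: numerical norm bounded by mass}, $\norm{[\Phi^nE_j]}\leq C(\sigma)m_\sigma(\Phi^nE_j)$. Since $\Phi^n$ is an exact autoequivalence, any construction of $E_j$ from shifts of $G$ transports to a construction of $\Phi^nE_j$ from shifts of $\Phi^nG$ of the same length, and shifts and direct sums are preserved. Therefore $\delta_0(\Phi^nG,\Phi^nE_j)\leq\delta_0(G,E_j)$, and this quantity is finite and independent of $\Phi$ and $n$. Applying \cref{theorem: mass estimates and generator formula}(1) with $A=\Phi^nG\neq0$ gives
\[
m_\sigma(\Phi^nE_j)\leq m_\sigma(\Phi^nG)\,\delta_0(G,E_j).
\]

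Combining the two steps yields
\[
\norm{(\numg{\Phi})^n}\leq K\,C(\sigma)\max_j\delta_0(G,E_j)\,m_\sigma(\Phi^nG),
\]
so we may take $c(G,\sigma)$ to be the reciprocal of $K\,C(\sigma)\max_j\delta_0(G,E_j)$. This constant depends only on $G$, $\sigma$, the chosen $E_j$, and the norm, and not on $\Phi$ or $n$.

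The only delicate point is the invariance $\delta_0(\Phi^nG,\Phi^nE)\leq\delta_0(G,E)$. It holds because an exact equivalence sends the cones and direct summands in a defining tower to cones and direct summands, so the complexity cannot increase. Everything else is finite-dimensional linear algebra together with the support property.
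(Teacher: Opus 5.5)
Your proof is correct and follows the route the paper indicates, which it states without details: it says the lemma follows from the support property \eqref{equation: numerical norm bounded by mass} and Ikeda's estimate in \cref{theorem: mass estimates and generator formula}(1). Your version fills in those details: you fix objects whose classes form a basis of $N(X)_{\RB}$, bound each $\norm{[\Phi^nE_j]}$ by mass, and use $\delta_0(\Phi^nG,\Phi^nE_j)\leq\delta_0(G,E_j)$ to make the constant independent of $\Phi$ and $n$.
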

To formulate the orbit principle theorem, we introduce a notation $\vtime_{\GC, \sigma}(\Phi)\coloneqq \{n\geq0\mid \Phi^{-n}(\sigma)\in\Sigma_\GC\}$.

\begin{lem}\label{lemma: mass estimate at return times}
Let $\GC=\{G_1,\ldots,G_r\}$ be a finite generating set, put $G=\bigoplus_iG_i$, and let $\sigma\in\Stab_N(X)$. There is $C(\GC,\sigma)>0$ such that
\[
n\in \vtime_{\GC,\sigma}(\Phi)
\quad\Longrightarrow\quad
m_\sigma(\Phi^nG)\leq C(\GC,\sigma)\norm{(\numg{\Phi})^n}.
\]
\end{lem}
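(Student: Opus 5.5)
The idea is to move the computation from $\Phi^nG$ at $\sigma$ to $G$ at $\tau_n\coloneqq\Phi^{-n}\sigma$ using covariance of mass, and then use semistability of each $G_i$ at $\tau_n$.

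\emph{Step 1 (covariance).} Fix $n\in\vtime_{\GC,\sigma}(\Phi)$ and put $\tau_n\coloneqq\Phi^{-n}\sigma$. By \eqref{equation: covariance of mass},
\[
m_\sigma(\Phi^nG)=m_{\tau_n}(G).
\]
Mass is additive on direct sums, since the Harder--Narasimhan filtration of a direct sum is assembled from those of the summands. Hence
\[
m_{\tau_n}(G)=\sum_i m_{\tau_n}(G_i).
\]

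\emph{Step 2 (semistability turns mass into a central charge).} Since $\tau_n\in\Sigma_{\GC}$, each $G_i$ is $\tau_n$-semistable. Its mass is therefore the modulus of its central charge:
\[
m_{\tau_n}(G_i)=\abs{Z_{\tau_n}([G_i])}.
\]
Write $\sigma=(Z,\PC)$. The definition of the action gives
\[
Z_{\tau_n}=Z\circ \numg{(\Phi^{-n})}^{-1}=Z\circ(\numg{\Phi})^{n}.
\]
Consequently
\[
m_{\tau_n}(G_i)=\abs{Z\bigl((\numg{\Phi})^n[G_i]\bigr)}.
\]

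\emph{Step 3 (linear bound).} $Z$ is a linear map on the finite-dimensional space $N(X)_{\RB}$, so there is a constant $\norm{Z}$ with $\abs{Z(w)}\le\norm{Z}\,\norm{w}$. This gives
\[
m_\sigma(\Phi^nG)\le \norm{Z}\sum_i\norm{(\numg{\Phi})^n}\,\norm{[G_i]}.
\]
So the statement holds with
\[
C(\GC,\sigma)=\norm{Z}\sum_i\norm{[G_i]}.
\]
This constant depends only on $\sigma$ and $\GC$, not on $\Phi$ or $n$. If $\norm{Z}=0$ or the sum vanishes, any positive constant works, since the left side is then $0$.

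\emph{Main point to check.} The only delicate part is the sign of the exponent in Step 2. From $\Phi\sigma=(Z\circ(\numg{\Phi})^{-1},\Phi\PC)$, the central charge of $\Phi^{-n}\sigma$ is $Z\circ(\numg{\Phi^{-n}})^{-1}=Z\circ(\numg{\Phi})^n$, which is what is needed. This matches covariance: $Z_{\tau_n}([G_i])=Z([\Phi^nG_i])$.

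Everything else is routine. Additivity of mass on direct sums holds because merging the HN filtrations of the summands by phase gives the HN filtration of the sum. The support property is not even needed for this direction.
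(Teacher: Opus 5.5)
Your proposal is correct and follows essentially the same route as the paper: semistability of the $G_i$ at the return time reduces $m_\sigma(\Phi^nG)$ to $\sum_i\abs{Z_\sigma((\numg{\Phi})^n[G_i])}$, and a linear bound on $Z_\sigma$ finishes the argument with the same constant $\norm{Z_\sigma}\sum_i\norm{[G_i]}$. The only cosmetic difference is that the paper states directly that each $\Phi^nG_i$ is $\sigma$-semistable, whereas you transport to $\tau_n=\Phi^{-n}\sigma$ via covariance; your sign check $Z_{\tau_n}=Z\circ(\numg{\Phi})^n$ is right.
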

\begin{proof}
If $\Phi^{-n}\sigma\in\Sigma_{\GC}$, each $\Phi^nG_i$ is $\sigma$-semistable. 
Therefore, additivity of mass under direct sums gives
\begin{equation*}
m_\sigma(\Phi^nG)=\sum_{i=1}^r\abs{Z_\sigma((\numg{\Phi})^n[G_i])}\leq\norm{Z_\sigma}\norm{(\numg{\Phi})^n}\sum_{i=1}^r\norm{[G_i]}.\qedhere
\end{equation*}
\end{proof}

\begin{thm}\label{theorem: orbit escape principle}
Let $\Phi\in\Auteq(X)$, $\sigma\in\MH_X$, and let $\GC$ be a finite generating set. If $\vtime_{\GC,\sigma}(\Phi)$ is infinite, then
\[
\hcat(\Phi)=h_\sigma(\Phi)=\log\rho(\numg{\Phi}).
\]
Equivalently,
\[
\hcat(\Phi)>\log\rho(\numg{\Phi})
\quad\Longrightarrow\quad
\exists N, \forall n\geq N, \Phi^{-n}\sigma\notin\Sigma_{\GC}.
\]
\end{thm}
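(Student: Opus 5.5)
The proof combines \cref{corollary: equality of entropy and mass growth}, which gives a genuine limit, with \cref{lemma: mass estimate at return times}, which gives a bound along return times. Put $G=\bigoplus_iG_i$; since $\thcl{\GC}=D^b(X)$, $G$ is a classical generator. Since $\sigma\in\MH_X$, \cref{corollary: equality of entropy and mass growth} (with $t=0$) shows that
\[
\hcat(\Phi)=h_\sigma(\Phi)=\lim_{n\to\infty}\frac{1}{n}\log m_\sigma(\Phi^nG),
\]
so the limit exists, not just the $\limsup$. Consequently, the limit can be evaluated along any infinite subsequence of $n$. I will use the subsequence $\vtime_{\GC,\sigma}(\Phi)$, which is infinite by hypothesis.

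For $n\in\vtime_{\GC,\sigma}(\Phi)$, \cref{lemma: mass estimate at return times} gives
\[
m_\sigma(\Phi^nG)\leq C(\GC,\sigma)\norm{(\numg{\Phi})^n}.
\]
Taking $\frac1n\log$ and letting $n\to\infty$ along this subsequence, Gelfand's formula $\lim_n\norm{(\numg{\Phi})^n}^{1/n}=\rho(\numg{\Phi})$ gives
\[
\hcat(\Phi)=h_\sigma(\Phi)\leq\log\rho(\numg{\Phi}).
\]
The reverse inequality $\log\rho(\numg{\Phi})\leq h_\sigma(\Phi)$ is \cref{theorem: mass estimates and generator formula}(3), or \cref{theorem: numerical lower bound for categorical entropy} for $\hcat$. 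Alternatively, it follows from \cref{lemma: numerical operator norm bounded by mass} combined with the existence of the limit. This proves all three equalities.

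The equivalent formulation is the contrapositive. If $\hcat(\Phi)>\log\rho(\numg{\Phi})$, then $\vtime_{\GC,\sigma}(\Phi)$ must be finite. Taking $N$ larger than its maximum (or $N=0$ if it is empty) gives $\Phi^{-n}\sigma\notin\Sigma_{\GC}$ for all $n\geq N$.

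The main subtlety is the passage from a subsequence to the full growth rate, and this is exactly where the mass--Hom hypothesis is used. Without $\sigma\in\MH_X$, mass growth is only defined by a $\limsup$, and a bound along a sparse subsequence would not control it. The existence of the limit in \cref{corollary: equality of entropy and mass growth} removes this issue. One should also handle the degenerate case $\Phi^nG=0$; this cannot occur because $\Phi$ is an autoequivalence, so all logarithms are finite.
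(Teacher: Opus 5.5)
Your proof is correct and follows the paper's route. You bound $m_\sigma(\Phi^nG)$ along return times by \cref{lemma: mass estimate at return times}, apply Gelfand's formula, and close with the numerical lower bound and \cref{corollary: equality of entropy and mass growth}. You also make explicit a point the paper's proof leaves implicit: $h_\sigma(\Phi)$ is defined by a $\limsup$, so a bound along the subsequence $\vtime_{\GC,\sigma}(\Phi)$ controls it only because that corollary guarantees the limit exists, and this is where the mass--Hom hypothesis enters.
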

\begin{proof}
Put $G=\bigoplus_{G_i\in\GC}G_i$, and choose $n_1<n_2<\cdots$ with $n_j\to\infty$ where $n_j\in\vtime_{\GC, \sigma}(\Phi)$. By \cref{lemma: mass estimate at return times}, we have the inequality $m_\sigma(\Phi^{n_j}G)\leq C(\GC,\sigma)\norm{(\numg{\Phi})^{n_j}}$.
Taking the subsequence $n_j$ and using Gelfand's formula, we obtain
\[
h_\sigma(\Phi)\leq\log\rho(\numg{\Phi}).
\]
The reverse inequality follows from \cref{theorem: numerical lower bound for categorical entropy}, and the equality with mass growth follows from \cref{corollary: equality of entropy and mass growth}. 
\end{proof}

The converse to the first assertion need not hold, as the following example shows.

\begin{eg}\label{example: escape with zero entropy}
Let $X=\PP1$, $\GC=\{\OO,\OO(1)\}$, and $\Phi=\tensfunc_{\OO(1)}$.
The set $\GC$ is a finite generating set by \cref{theorem: existence of generator} and $\hcat(\Phi)=\log\rho(\numg{\Phi})=0$ by \cref{lemma: categorical entropy of tensor functors}.

The Ext-exceptional pair $(\OO[1],\OO(1))$ generates a finite-length heart $\AC=\langle\OO[1],\OO(1)\rangle_{\mathrm{ex}}$.
By \cite[Lemmas 3.14 and 3.16]{macri_2007_stability_conditions_on_curves}, the central charge
\[
Z(\OO[1])=i,\quad
Z(\OO(1))=-1+i
\]
defines a numerical stability condition $\sigma$ with heart $\AC$, whose two simple objects have phases $1/2$ and $3/4$, respectively.
Moreover, $\sigma$ satisfies a mass--Hom bound by \cite[Example 2.9]{preprint_daniel_antoniosalexandros_2025_the_space_of_augmented_stability_conditions}.
Therefore, $\sigma\in\MH_X\cap\Sigma_{\GC}^{\mathrm s}$.

For every integer $n\geq1$, the evaluation sequence of $\OO(n)$ tensored by $\OO(1)$ is
\[
0\longrightarrow\OO^{\oplus n}\longrightarrow\OO(1)^{\oplus(n+1)}
\longrightarrow\OO(n+1)\longrightarrow0.
\]
Rotating the corresponding exact triangle gives a short exact sequence in $\AC$,
\[
0\longrightarrow\OO(1)^{\oplus(n+1)}\longrightarrow\OO(n+1)
\longrightarrow\OO[1]^{\oplus n}\longrightarrow0.
\]
Since $Z(\OO(n+1))=-(n+1)+(2n+1)i$, we have
\[
\arg Z(\OO(n+1))<\frac{3\pi}{4}=\arg Z(\OO(1)).
\]
Thus, $\OO(1)^{\oplus(n+1)}$ destabilizes $\OO(n+1)$ for every $n\geq1$.
Since $\Phi^n(\OO(1))=\OO(n+1)$, it follows that $\Phi^{-n}\sigma\notin\Sigma_{\GC}$ for every $n\geq1$.
Together with $\sigma\in\Sigma_{\GC}^{\mathrm s}$, this gives
\[
\vtime_{\GC,\sigma}(\Phi)=\{0\}.
\]
\end{eg}

\begin{rem}[Compatible triples]\label{remark: compatible triples and orbit escape}
Barbacovi--Kim \cite{barbacovi_kim_2023_on_gromovyomdin_type_theorems_and_a_categorical_interpretation_of_holomorphicity} call a triple
$(\Phi,\sigma,g)\in\Auteq(X)\times\Stab_N(X)\times\GLt$
\emph{compatible} if $\Phi\sigma=\sigma\cdot g$, where the right-hand side is the standard action of the universal cover of $\GL^+(2,\RB)$.
This action commutes with autoequivalences and preserves semistable objects.
Choose a finite generating set $\GC$ of $\sigma$-semistable objects; the Harder--Narasimhan factors of any classical generator form such a set.
Compatibility then gives
\[
\{\Phi^n\sigma\mid n\in\ZZ\}
\subseteq\sigma\cdot\GLt\subseteq\Sigma_{\GC}.
\]
Thus, compatible triples give nonescaping orbits in the present setting.
If $\sigma\in\MH_X$, \cref{theorem: orbit escape principle} yields
$\hcat(\Phi)=h_\sigma(\Phi)=\log\rho(\numg{\Phi})$.
In contrast to compatibility, the orbit criterion only requires infinitely many returns to the semistability locus of a finite generating set.
\end{rem}

\section{The Gromov--Yomdin type equality for abelian varieties}\label{section: abelian varieties}

\begin{prop}\label{proposition: universally stable generator on abelian varieties}
Let $A$ be an abelian variety of dimension $g$, $L$ be a very ample line bundle, $\GC=\{\OO_A,L,\ldots,L^{\otimes g}\}$ be a finite generating set of $D^b(A)$.
Then, 
\[
\Sigma_{\GC}^{\mathrm s}=\Sigma_{\GC}=\Stab_N(A).
\]
\end{prop}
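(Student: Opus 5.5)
The plan is to combine \cref{theorem: existence of generator} with part (2) of \cref{theorem: universal stability for finite Albanese morphisms}, and then apply the elementary inclusions among the three sets.

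First, $\GC$ is a finite generating set. Taking $k=0$ in \cref{theorem: existence of generator}, the object $\bigoplus_{j=0}^{g}L^{\otimes j}$ is a classical generator of $D^b(A)$. Its thick closure is $\thcl{\GC}$, since a thick subcategory containing each summand contains the direct sum, and conversely is closed under summands. Hence $\thcl{\GC}=D^b(A)$, and each member of $\GC$ is nonzero.

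Second, every member of $\GC$ is a line bundle. By the remark following \cref{definition: semihomogeneous vector bundle}, line bundles are simple and semihomogeneous. Explicitly, $\End(L^{\otimes j})=H^0(\OO_A)=\CB$, and
\[
t_a^*L^{\otimes j}\otimes L^{\otimes -j}=\phi_{L^{\otimes j}}(a)\in\widehat A
\]
by \cref{proposition: basic facts about abelian varieties}(3), which is the semihomogeneity condition. Therefore \cref{theorem: universal stability for finite Albanese morphisms}(2) shows that each $L^{\otimes j}$, $0\le j\le g$, is $\sigma$-stable for every $\sigma\in\Stab_N(A)$. This gives $\Stab_N(A)\subseteq\Sigma_{\GC}^{\mathrm s}$.

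Finally, stable objects are semistable, and both loci are subsets of $\Stab_N(A)$ by \cref{definition: semistability locus and return times}. So
\[
\Stab_N(A)\subseteq\Sigma_{\GC}^{\mathrm s}\subseteq\Sigma_{\GC}\subseteq\Stab_N(A),
\]
and all three sets are equal.

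There is no real obstacle here. The substantive input is the Fu--Li--Zhao theorem, which is taken as given. The only point needing care is to use the convention that $\Stab_N(A)$ consists of numerical stability conditions with the full support property, which matches the hypothesis of that theorem.
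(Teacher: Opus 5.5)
Your proposal is correct and follows essentially the same route as the paper: line bundles are simple semihomogeneous, so \cref{theorem: universal stability for finite Albanese morphisms}(2) makes each $L^{\otimes j}$ stable for every $\sigma\in\Stab_N(A)$, which gives the chain of inclusions. The paper states this in one line; you additionally spell out why $\GC$ generates and why line bundles are semihomogeneous, which the paper leaves implicit.
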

\begin{proof}
Since every $L^{\otimes j}$ is a simple semihomogeneous vector bundle, the assertion follows from \cref{theorem: universal stability for finite Albanese morphisms}.
\end{proof}

\begin{thm}\label{theorem: Gromov Yomdin equality for abelian varieties}
For every abelian variety $A$, every $\Phi\in\Auteq(A)$, and every $\sigma\in\Stab_N(A)$, we have
\[
\hcat(\Phi)=h_\sigma(\Phi)=\log\rho(\numg{\Phi}).
\]
\end{thm}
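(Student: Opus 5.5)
The plan is to combine the orbit escape principle (\cref{theorem: orbit escape principle}) with the universal stability result of \cref{proposition: universally stable generator on abelian varieties}. First, I would fix a very ample line bundle $L$ on $A$ and take $\GC=\{\OO_A,L,\ldots,L^{\otimes g}\}$, which is a finite generating set by \cref{theorem: existence of generator}. By \cref{proposition: universally stable generator on abelian varieties}, $\Sigma_{\GC}=\Stab_N(A)$. Consequently, for every $\tau\in\Stab_N(A)$ and every $\Phi\in\Auteq(A)$, all $\Phi^{-n}\tau$ lie in $\Sigma_{\GC}$, and so $\vtime_{\GC,\tau}(\Phi)=\ZZ_{\geq0}$ is infinite.

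Next, I would obtain the equality $\hcat(\Phi)=\log\rho(\numg{\Phi})$. By \cref{theorem: existence of numerical mass Hom stability conditions}, there is some $\tau\in\MH_A$. Applying \cref{theorem: orbit escape principle} to $\tau$ gives
\[
\hcat(\Phi)=h_\tau(\Phi)=\log\rho(\numg{\Phi}).
\]
This equality does not depend on $\tau$.

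Finally, I would handle an arbitrary $\sigma\in\Stab_N(A)$, which need not lie in $\MH_A$. I would sandwich $h_\sigma(\Phi)$ between the two quantities using \cref{theorem: mass estimates and generator formula}: part (3) gives $\log\rho(\numg{\Phi})\leq h_\sigma(\Phi)$, and part (2) gives $h_\sigma(\Phi)\leq\hcat(\Phi)$. Since the outer terms agree by the previous step, all three quantities coincide.

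An alternative for the last step avoids the mass--Hom hypothesis. \cref{lemma: mass estimate at return times} applies with $\sigma$ itself at every $n$, giving $m_\sigma(\Phi^nG)\leq C\norm{(\numg{\Phi})^n}$. Gelfand's formula then yields $h_\sigma(\Phi)\leq\log\rho(\numg{\Phi})$ directly. The only point needing care is that the mass--Hom condition is used solely to identify $h_\tau$ with $\hcat$, and the sandwich argument transfers the result to every $\sigma$. I do not expect a genuine obstacle; the substance lies in \cref{theorem: universal stability for finite Albanese morphisms} and \cref{theorem: existence of numerical mass Hom stability conditions}.
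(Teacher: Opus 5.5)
Your proposal is correct and follows the paper's proof essentially verbatim. You take $\GC=\{\OO_A,L,\ldots,L^{\otimes g}\}$ so that $\Sigma_{\GC}=\Stab_N(A)$, apply \cref{theorem: orbit escape principle} at some $\sigma_0\in\MH_A$ to obtain $\hcat(\Phi)=\log\rho(\numg{\Phi})$, and then sandwich $h_\sigma(\Phi)$ for arbitrary $\sigma$ via \cref{theorem: mass estimates and generator formula}. Your alternative last step, which applies \cref{lemma: mass estimate at return times} directly at $\sigma$ and needs no mass--Hom bound, is also valid: together with Gelfand's formula and part (3) of \cref{theorem: mass estimates and generator formula}, it gives $h_\sigma(\Phi)=\log\rho(\numg{\Phi})$.
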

\begin{proof}
Choose $\GC$ as in
\cref{proposition: universally stable generator on abelian varieties}
and $\sigma_0\in\MH_A$ by
\cref{theorem: existence of numerical mass Hom stability conditions}.
Since $\Phi^{-n}\sigma_0\in\Sigma_{\GC}$ for every $n\geq0$,
\cref{theorem: orbit escape principle} gives
\[
    \hcat(\Phi)=\log\rho(\numg{\Phi}).
\]
For every $\sigma\in\Stab_N(A)$,
\cref{theorem: mass estimates and generator formula} gives
\[
    \log\rho(\numg{\Phi})\leq h_\sigma(\Phi)\leq\hcat(\Phi),
\]
which proves the assertion.
\end{proof}

\subsection{The cohomological spectral radius}\label{subsection: cohomological spectral radius}
For an autoequivalence $\Phi$ of an abelian variety, put $\rho_H(\Phi)=\rho(\Phi^H\colon H^*(A,\CB) \to H^*(A,\CB))$.
In general, $\log\rho(\numg{\Phi})\leq \log\rho_H(\Phi)$ holds, whereas the reverse inequality does not necessarily hold.
In this subsection, we show that the categorical entropy agrees with both $\log\rho(\numg{\Phi})$ and $\log\rho_H(\Phi)$.

\begin{lem}\label{lemma: entropy of external product with identity}
Let $X,Y$ be smooth projective varieties and $\Psi\in\Auteq(Y)$. Then
\[
\hcat(\id_{D^b(X)}\boxtimes\Psi)=\hcat(\Psi).
\]
\end{lem}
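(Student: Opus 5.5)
We will compute the entropy with box-product generators and Künneth. If $G_X$ and $G_Y$ are classical generators of $D^b(X)$ and $D^b(Y)$, then $G_X\boxtimes G_Y$ is a classical generator of $D^b(X\times Y)$. Concretely, take $G_X=\bigoplus_{j=0}^{\dim X}L_X^{\otimes j}$ and $G_Y=\bigoplus_{j=0}^{\dim Y}L_Y^{\otimes j}$ as in \cref{theorem: existence of generator}. Then $L_X\boxtimes L_Y$ is very ample on $X\times Y$, and \cref{theorem: existence of generator} shows that the powers $(L_X\boxtimes L_Y)^{\otimes j}$ for $0\le j\le\dim X+\dim Y$ form a classical generator. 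Each such power is a direct summand of $G'\boxtimes G''$, where $G'$ and $G''$ are sums of sufficiently many consecutive powers of $L_X$ and $L_Y$, and these sums are themselves generators by the theorem. So we may use a generator of the form $G_X\boxtimes G_Y$, after enlarging $G_X$ and $G_Y$ if necessary.

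Next, $\id\boxtimes\Psi$ is the Fourier--Mukai functor with kernel $\OO_{\Delta_X}\boxtimes K$, where $K$ is the kernel of $\Psi$. Hence $(\id\boxtimes\Psi)^n(E\boxtimes F)\cong E\boxtimes\Psi^nF$ by the projection formula and flat base change. The Künneth formula then gives
\[
\sum_{i}\hom^i(G_X\boxtimes G_Y,\,G_X\boxtimes\Psi^nG_Y)=\Bigl(\sum_a\hom^a(G_X,G_X)\Bigr)\Bigl(\sum_b\hom^b(G_Y,\Psi^nG_Y)\Bigr).
\]
The first factor is a positive constant independent of $n$. Taking $\frac1n\log$ and applying \cref{theorem: categorical entropy and total Hom comparison}(2) at $t=0$ on both $X\times Y$ and $Y$ yields $\hcat(\id\boxtimes\Psi)=\hcat(\Psi)$.

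The only step needing care is the claim that the box product of generators (suitably enlarged) is a classical generator. The summand argument above reduces this to Orlov's theorem. Alternatively, one can use the standard fact that $\thcl{G_X\boxtimes G_Y}$ contains all $E\boxtimes F$, and that these generate $D^b(X\times Y)$. The Künneth isomorphism $\Hom^\bullet(E\boxtimes F,E'\boxtimes F')\cong\Hom^\bullet(E,E')\otimes\Hom^\bullet(F,F')$ is routine for smooth projective varieties.
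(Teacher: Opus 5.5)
Your proposal is correct and follows essentially the same route as the paper: use the box product $G_X\boxtimes G_Y$ of classical generators, apply the K\"unneth formula to factor the total Hom, and conclude with \cref{theorem: categorical entropy and total Hom comparison}. The only difference is that you justify, via Orlov's theorem and the Segre embedding, that a box product of generators is a classical generator, a point the paper simply asserts.
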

\begin{proof}
If $G_X,G_Y$ are classical generators, then $G_X\boxtimes G_Y$ is a classical generator of $D^b(X\times Y)$. The K\"unneth formula gives
\[
\sum_{i\in\ZZ}\hom^i(G_X\boxtimes G_Y,G_X\boxtimes\Psi^nG_Y)
=\left(\sum_{i\in\ZZ}\hom^i(G_X,G_X)\right)\left(\sum_{i\in\ZZ}\hom^i(G_Y,\Psi^nG_Y)\right).
\]
The assertion follows from \cref{theorem: categorical entropy and total Hom comparison}.
\end{proof}

Let $X$ be a smooth projective variety.
For an endomorphism $T$ of $H^*(X,\CB)$ preserving
$H^{\mathrm{ev}}(X,\CB)$ and $H^{\mathrm{odd}}(X,\CB)$, its \emph{supertrace} is
defined by
\[
\sTr(T)
=
\Tr(T|_{H^{\mathrm{ev}}(X,\CB)})
-
\Tr(T|_{H^{\mathrm{odd}}(X,\CB)}).
\]
Note that the cohomological action of a Fourier--Mukai functor preserves this decomposition.
For any $K\in D^b(X\times X)$, the cohomological Lefschetz formula
\cite[Theorem 1.2 and Remark 4.5]{lunts_2012_lefschetz_fixed_point_theorems_for_fouriermukai_functors_and_dg_algebras}
gives
\[
\sTr(\Phi_K^H)
=
\int_X\Delta_X^*\ch(K)\td(X),
\]
where $\Delta_X:X\hookrightarrow X\times X$ is the diagonal embedding.

\begin{prop}\label{proposition: numerical and cohomological entropy on abelian varieties}
For every autoequivalence $\Phi$ of an abelian variety $A$,
\[
\hcat(\Phi)=\log\rho(\numg{\Phi})=\log\rho_H(\Phi).
\]
\end{prop}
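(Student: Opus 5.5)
The plan is to prove the chain
\[
\hcat(\Phi)=\log\rho(\numg{\Phi})\le\log\rho_H(\Phi)\le\hcat(\Phi).
\]
The first equality is \cref{theorem: Gromov Yomdin equality for abelian varieties}. The middle inequality is \eqref{equation: numerical and cohomological spectral radii}. So everything reduces to the upper bound $\log\rho_H(\Phi)\le\hcat(\Phi)$. The tools placed just before the statement are \cref{lemma: entropy of external product with identity} and the Lefschetz formula, and I will use both.

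\emph{Step 1: supertraces are bounded by entropy.} Let $K_n\in D^b(A\times A)$ be the kernel of $\Phi^n$. Up to the usual conventions, $K_n\cong(\id_{D^b(A)}\boxtimes\Phi^n)(\OO_\Delta)$. Since $\td(A)=1$ and $\td(A\times A)=1$ (\cref{proposition: basic facts about abelian varieties}), Lunts' formula becomes, via Grothendieck--Riemann--Roch for the diagonal,
\[
\sTr\bigl((\Phi^H)^n\bigr)=\int_A\Delta^*\ch(K_n)=\pm\chi_{A\times A}(\OO_\Delta,K_n).
\]
Hence $\abs{\sTr((\Phi^H)^n)}\le\sum_i\hom^i(\OO_\Delta,(\id\boxtimes\Phi)^n\OO_\Delta)$. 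By \cref{lemma: fixed objects and total Hom growth} and \cref{theorem: categorical entropy and total Hom comparison}, this total Hom grows at exponential rate at most $\hcat(\id\boxtimes\Phi)$. By \cref{lemma: entropy of external product with identity}, that rate equals $\hcat(\Phi)$. The same argument applies to $\Psi\circ\Phi^n$ for any fixed autoequivalence $\Psi$, because the fixed kernel of $\Psi$ only changes the constant. This gives
\[
\limsup_{n}\frac1n\log\abs{\sTr(\Psi^H(\Phi^H)^n)}\le\hcat(\Phi).
\]

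\emph{Step 2: from supertraces to the spectral radius.} Write $\sTr((\Phi^H)^n)=\sum_\mu c_\mu\mu^n$. Here $\mu$ runs over the eigenvalues of $\Phi^H$, and $c_\mu\in\ZZ$ is the multiplicity on $H^{\mathrm{ev}}$ minus the multiplicity on $H^{\mathrm{odd}}$. By the standard lemma on exponential sums (a Kronecker/Dirichlet argument on the unit-modulus parts), if some $c_\mu\neq0$ with $\abs{\mu}=\rho_H(\Phi)$, then $\limsup\frac1n\log\abs{\sum_\mu c_\mu\mu^n}=\log\rho_H(\Phi)$, and Step~1 finishes the proof.

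\emph{Step 3: the main obstacle is cancellation between even and odd parts.} An eigenvalue of maximal modulus could occur with equal multiplicity in $H^{\mathrm{ev}}$ and $H^{\mathrm{odd}}$. I would try to remove this by doubling. Replace $A$ by $A\times A$ and $\Phi$ by $\Phi\boxtimes\Phi$. The entropy is at most $2\hcat(\Phi)$, by \cref{proposition: basic properties of categorical entropy}(4) applied to the commuting pair $\Phi\boxtimes\id$ and $\id\boxtimes\Phi$, together with \cref{lemma: entropy of external product with identity}. Its cohomological action is $\Phi^H\otimes\Phi^H$, and $H^{\mathrm{odd}}\otimes H^{\mathrm{odd}}$ lands in even degree, so the top modulus $\rho_H^2$ appears in the even part. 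Cancellation against the mixed part $H^{\mathrm{ev}}\otimes H^{\mathrm{odd}}$ could still occur. To break it, I would additionally twist by a fixed $\Psi$ in Step~1. Natural choices are $\Psi=(-\id_A)^*$, which acts by $(-1)^k$ on $H^k$ and so flips the relative sign of graded pieces, or tensoring by line bundles. Since $\Psi^H$ and $\Phi^H$ need not commute, a cleaner alternative is Mukai's description: $\Phi^H$ is the spin representation of some $g\in\mathrm{Spin}(H^1(A\times\widehat A))$, with $H^{\mathrm{ev}}$ and $H^{\mathrm{odd}}$ the two half-spin representations. Their weights are explicit products $\prod_j\lambda_j^{\pm1/2}$. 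One then checks directly that the top weight $\prod_{\abs{\lambda_j}>1}\lambda_j^{1/2}$, together with its conjugates of the same modulus, gives a nonzero net coefficient in the supertrace of $\Phi\boxtimes\Phi$, or of $\Phi\boxtimes\Phi$ composed with $(-\id)^*\boxtimes\id$. I expect this non-cancellation check to be the delicate point of the proof.
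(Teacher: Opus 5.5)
Your Steps 1 and 2 are sound, but the proof is incomplete. Step 3, which you correctly identify as the crux, is left open, and the remedies you propose do not work.

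Doubling cannot remove the cancellation, because the supertrace is multiplicative for graded tensor products: $\sTr(S\otimes T)=\sTr(S)\sTr(T)$. For example, let $A=A_1\times E$ with $E$ an elliptic curve, and let $\Phi=\Phi_1\boxtimes\id_{D^b(E)}$ with $\Phi_1\in\Auteq(A_1)$ satisfying $\rho_H(\Phi_1)>1$. Then $\Phi^H=\Phi_1^H\otimes\id_{H^*(E)}$ and $\sTr(\id_{H^*(E)})=1-2+1=0$. Hence $\sTr((\Phi^H)^n)=0$ for every $n$, although $\rho_H(\Phi)>1$. The supertraces for $(\Phi\boxtimes\Phi)^n$ and for $((-\id)^*\boxtimes\id)\circ(\Phi\boxtimes\Phi)^n$ equal $\sTr((\Phi^H)^n)^2$ and $\Tr((\Phi^H)^n)\,\sTr((\Phi^H)^n)$. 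So they also vanish identically, and the spin-weight non-cancellation check you anticipate is false for exactly the functors you propose.

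The idea you discarded is the right one, and it needs neither doubling nor any commutativity. The operator $(-\id_A)^*$ acts on $H^*(A,\CB)$ as the parity operator $\epsilon=\id_{H^{\mathrm{ev}}}\oplus(-\id_{H^{\mathrm{odd}}})$. So for every $T$ preserving the parity decomposition,
\[
\sTr(\epsilon T)=\Tr(T|_{H^{\mathrm{ev}}})+\Tr(T|_{H^{\mathrm{odd}}})=\Tr(T).
\]
In particular, $\epsilon$ commutes with $\Phi^H$, contrary to your worry. Applying your Step 1 with $\Psi=(-\id_A)^*$ gives
\[
\limsup_n\tfrac1n\log\abs{\Tr((\Phi^H)^n)}\le\hcat(\Phi).
\]
In $\Tr((\Phi^H)^n)=\sum_\mu m_\mu\mu^n$ every coefficient is a positive algebraic multiplicity. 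So your Step 2 applies with no cancellation and yields $\log\rho_H(\Phi)\le\hcat(\Phi)$.

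This is exactly the paper's argument. It writes
\[
\Tr((\Phi^H)^n)=\sTr((\iota^*\circ\Phi^n)^H)=\int_{A\times A}[\Gamma_\iota]\ch(K_n)=(-1)^g\chi_{A\times A}(\OO_{\Gamma_\iota},K_n),
\]
and then bounds this by the fixed-object lemma with $\OO_{\Gamma_\iota}$ in place of $\OO_\Delta$.
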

\begin{proof}
Let $g=\dim A$, let $\iota=-\id_A$, and let $\Gamma_\iota\subset A\times A$ be its graph.
Denote the diagonal by $\Delta$, and write $K_n$ for the Fourier--Mukai kernel of $\Phi^n$.

Since $\iota^*$ acts as the identity on $H^{\mathrm{ev}}(A,\CB)$
and as $-\id$ on $H^{\mathrm{odd}}(A,\CB)$, we have
\[
\sTr((\iota^*\circ\Phi^n)^H)=\Tr((\Phi^H)^n).
\]
Since $\iota^*\circ\Phi^n$ has Fourier-Mukai kernel $(\id_A\times\iota)^*K_n$, $(\id_A\times\iota)(\Delta)=\Gamma_\iota$, and $\td(A)=1$ by \cref{proposition: basic facts about abelian varieties}, the Lefschetz formula above and the projection formula give
\begin{align*}
\Tr((\Phi^H)^n)
&=\sTr((\iota^*\circ\Phi^n)^H)\\
&=\int_{A}\Delta_{A}^*\ch((\id_A\times\iota)^*K_n)
=\int_{A\times A}[\Gamma_\iota]\ch(K_n).
\end{align*}
The normal bundle of $\Gamma_\iota$ is trivial of rank $g$.
Thus, duality for the graph embedding gives
the derived dual $\OO_{\Gamma_\iota}^{\vee}\cong\OO_{\Gamma_\iota}[-g]$,
while Grothendieck--Riemann--Roch gives
$\ch(\OO_{\Gamma_\iota})=[\Gamma_\iota]$.
Consequently,
$\ch(\OO_{\Gamma_\iota}^{\vee})=(-1)^g[\Gamma_\iota]$,
and Riemann--Roch yields
\[\Tr((\Phi^H)^n)
=(-1)^g\chi_{A\times A}(\OO_{\Gamma_\iota},K_n).\]

Set $\Phi_A=\id_{D^b(A)}\boxtimes\Phi$, and choose a classical
generator $H$ of $D^b(A\times A)$.
Since $K_n\cong\Phi_A^n(\OO_\Delta)$ and
$\OO_{\Gamma_\iota}$ and $\OO_\Delta$ are fixed objects,
\cref{lemma: fixed objects and total Hom growth}
gives a constant $C>0$, which is independent of $n$, such that
\begin{align*}
\abs{\Tr((\Phi^H)^n)}
&\leq
\sum_{i\in\ZZ}
\hom^i(\OO_{\Gamma_\iota},\Phi_A^n(\OO_\Delta))\\
&\leq
C\sum_{i\in\ZZ}\hom^i(H,\Phi_A^nH).
\end{align*}
The spectral radius of the linear operator $\Phi^H$ satisfies
$\rho_H(\Phi)
=\limsup_{n\to\infty}\abs{\Tr((\Phi^H)^n)}^{1/n}$.
Taking exponential growth rates in the preceding inequality and
applying
\cref{theorem: categorical entropy and total Hom comparison,lemma: entropy of external product with identity},
we obtain
\[
\log\rho_H(\Phi)
\leq\hcat(\Phi_A)
=\hcat(\Phi).
\]
On the other hand,
\cref{theorem: Gromov Yomdin equality for abelian varieties}
and \eqref{equation: numerical and cohomological spectral radii}
give $\hcat(\Phi)=\log\rho(\numg{\Phi})\leq\log\rho_H(\Phi)$.
This proves the assertion.
\end{proof}

\section{Applications to finite Albanese morphisms}\label{section: finite Albanese morphisms}

We now prove the Gromov--Yomdin property for varieties with finite Albanese morphisms (\cref{theorem in intro: Gromov Yomdin equality for finite Albanese morphisms}).

\begin{thm}\label{theorem: Gromov Yomdin equality for finite Albanese morphisms}
Let $X$ be a smooth projective variety with finite Albanese morphism. For every $\Phi\in\Auteq(X)$ and every $\sigma\in\Stab_N(X)$,
\[
\hcat(\Phi)=h_\sigma(\Phi)=\log\rho(\numg{\Phi}).
\]
\end{thm}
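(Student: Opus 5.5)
By \cref{theorem: mass estimates and generator formula}, we have $\log\rho(\numg{\Phi})\leq h_\sigma(\Phi)\leq\hcat(\Phi)$ for every $\sigma\in\Stab_N(X)$. So it suffices to prove the single inequality $\hcat(\Phi)\leq\log\rho(\numg{\Phi})$. By \cref{proposition: basic properties of categorical entropy}(1), both sides scale linearly under passing to a positive power $\Phi^r$, since $\rho((\numg{\Phi})^r)=\rho(\numg{\Phi})^r$. We may therefore replace $\Phi$ by any positive power. The plan follows the outline in the introduction: pass to Kawamata's finite \'etale Galois cover $p\colon B\times Y\to X$. Here $B$ is an abelian variety and $Y$ is of general type with finite Albanese morphism.

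\textbf{Step 1 (lifting).} The Galois group $\Gamma$ of $p$ acts on $D^b(B\times Y)$, and $D^b(X)$ is the $\Gamma$-equivariant category. We show that some power $\Phi^r$ lifts to an autoequivalence $\widetilde\Phi$ of $D^b(B\times Y)$ with $p_*\widetilde\Phi\cong\Phi^rp_*$ and $\widetilde\Phi p^*\cong p^*\Phi^r$, commuting with $\Gamma$ up to a twist by an automorphism of $\Gamma$. Since $\Gamma$ is finite, its automorphism group is finite, so a further power commutes on the nose.

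The mechanism is as follows. $\Phi$ sends skyscraper sheaves to shifted skyscrapers up to a twist; this uses \cref{theorem: universal stability for finite Albanese morphisms}(1), since all points are stable of one phase for every $\sigma$ and $\Phi$ preserves the set of such objects. Hence $\Phi$ permutes the "point-like'' objects, and the induced action on $\Pic^0$ and $\Alb$ data lets us lift through the cover. This is the content of \cref{proposition: lifting a power to the product cover}.

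\textbf{Step 2 (normal form).} Because $Y$ is of general type, $\omega$-ampleness-type rigidity forces autoequivalences to be standard in the $Y$-direction. After another power, we expect
\[
\widetilde\Phi\cong \tensfunc_N\circ(\Psi\boxtimes f^*)\circ[s]
\]
with $\Psi\in\Auteq(B)$, $f\in\Aut(Y)$ of finite order, and $N$ a line bundle. The line bundle $N$ may be of the form $(\id\times g)^*\PC_B$ coupling the factors, via \cref{proposition: basic facts about abelian varieties}(4); this is \cref{proposition: relative normal form}. Passing to a power kills $f$, since $\Aut(Y)$ is finite for $Y$ of general type, and $[s]$ contributes $0$ at $t=0$.

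\textbf{Step 3 (entropy comparison).} First, $\hcat(\Phi^r)=\hcat(\widetilde\Phi)$. For a generator $G$ of $D^b(X)$, $p^*G$ generates $D^b(B\times Y)$. Adjunction gives $\hom^i(p^*G,p^*\Phi^{rn}G)=\hom^i(G,\Phi^{rn}p_*p^*G)$, and $p_*p^*G$ is a sum of twists of $G$ by torsion line bundles, so the total Hom growth rates agree.

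Next, bound $\hcat(\widetilde\Phi)$ by $\hcat(\Psi)$. The line bundle twists should not contribute, by \cref{lemma: categorical entropy of tensor functors} and \cref{lemma: entropy of external product with identity}. Then \cref{theorem: Gromov Yomdin equality for abelian varieties} gives $\hcat(\Psi)=\log\rho(\numg{\Psi})$.

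Finally, compare $\rho(\numg{\Psi})$ with $\rho(\numg{\Phi^r})$. The map $p^*\colon N(X)_\RB\to N(B\times Y)_\RB$ intertwines $\numg{\Phi^r}$ with $\numg{\widetilde\Phi}$. The image of $p^*$ is the $\Gamma$-invariant part, and it contains $N(B)\otimes[\OO_Y]$-type classes pulled back along the isogeny. So the eigenvalue of maximal modulus of $\numg{\Psi}$ should appear in $\numg{\Phi^r}$, possibly after using the cohomological description in \cref{proposition: numerical and cohomological entropy on abelian varieties}.

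\textbf{Main obstacle.} I expect the main obstacle to be the coupling line bundle $N$ in the normal form. Because $N$ and $\Psi\boxtimes\id$ need not commute, \cref{proposition: basic properties of categorical entropy}(4) does not apply directly. I would first try to absorb $N$ into a relative Fourier--Mukai transform over $Y$: fiberwise over $y\in Y$, the functor is $\tensfunc_{P_{g(y)}}\circ\Psi$. Translation and $\Pic^0$-twists act trivially on $N(B)$ and commute with $\Psi$ up to a translation or $\Pic^0$-twist. Thus the iterates look like $\Psi^n$ composed with a bounded family of such twists, and the total Hom growth is controlled uniformly by that of $\Psi^n$, using \cref{lemma: fixed objects and total Hom growth} in families.
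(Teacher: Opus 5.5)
Your architecture matches the paper's: reduce to $\hcat(\Phi)\le\log\rho(\numg{\Phi})$, pass to powers, lift to Kawamata's cover, use the normal form of \cref{proposition: relative normal form}, and compare with $\Psi$. However, the step that carries the weight, $\hcat(\widetilde\Phi^r)\le\log\rho(\numg{\Psi})$, is not proved.

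You correctly note that $\tensfunc_{\PC_g}$ does not commute with $\Psi\boxtimes\id$. The same holds for the fibrewise translation $(\tau_f)_*$ with $f\colon Y\to B$, which your normal form omits. The proposed remedy does not work, for the following reason.
\begin{itemize}
\item Iterating $\Phi_0=(\tau_f)_*\circ\tensfunc_{\PC_g}\circ(\Psi\boxtimes\id)$ and moving $\Psi$ past the twists via its Rouquier isomorphism $\varphi_\Psi$ on $B\times\widehat B$ produces relative twists by morphisms $Y\to B\times\widehat B$, roughly $\sum_{k<n}\varphi_\Psi^k\circ(f,g)$.
\item It also produces line bundles $\pi_Y^*M_n$ from commuting translations past Poincar\'e twists. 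When $\Psi$ is hyperbolic, these data grow exponentially.
\item They form a bounded family only fibre by fibre. $\Ext^*_{B\times Y}(G_P,\Phi_0^nG_P)$ is hypercohomology over $Y$ of a complex whose fibres you control but whose positivity along $Y$ grows with $n$. Fibrewise bounds therefore say nothing about it.
\item \cref{lemma: fixed objects and total Hom growth} concerns fixed objects, not families, so it does not help here.
\end{itemize}

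The idea you are missing, used in the paper, is to eliminate $Y$ altogether.
\begin{itemize}
\item Since $f,g$ factor through $a_Y$, the finite map $q=\id_B\times a_Y$ intertwines $\Phi_0$ with $\widehat\Phi_0=(\tau_{\bar f})_*\circ\tensfunc_{\PC_{\bar g}}\circ(\Psi\boxtimes\id)$ on the abelian variety $A'=B\times\Alb(Y)$.
\item $q^*$ does not increase complexity, and $q^*$ of a suitable Orlov generator contains a generator. Hence $\hcat(\Phi_0)\le\hcat(\widehat\Phi_0)=\log\rho(\numg{\widehat\Phi_0})$, using the abelian case.
\item Conjugating by $(\id_B\times[n])^*$ sends the numerical actions $U,V$ of the two twists to $U^n,V^n$ and fixes $T=\numg{(\Psi\boxtimes\id)}$. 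So $U,V$ are unipotent.
\item Then $\det(\lambda\cdot\id-U^nV^nT)$ is polynomial in $n$ and constant for $n\ge1$, so it equals its value $\det(\lambda\cdot\id-T)$ at $n=0$. This gives $\rho(\numg{\widehat\Phi_0})=\rho(\numg{\Psi})$.
\end{itemize}

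Two further steps are wrong as stated.

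\textbf{Lower bound.} You need a subspace invariant under $(\numg{\widetilde\Phi})^r$ on which it acts as $\pm\numg{\Psi}$. The subspace $\pi_B^*N(B)_{\RB}$ is not invariant: tensoring by $\PC_g$ and $\pi_Y^*M$ moves these classes. For instance, $c_1(\PC_g)$ has a nonzero mixed K\"unneth component when $g$ is nonconstant. The right choice is the fibre classes $W=i_{y*}N(B)_{\RB}$. Restricting to a fibre kills translations, $\Pic^0$-twists and $M$, so $(\numg{\widetilde\Phi})^r\circ i_{y*}=(-1)^s\,i_{y*}\circ\numg{\Psi}$. Since $\Gal(p)$ fixes $W$ pointwise, $p_*|_W$ is injective, and $p_*W$ is invariant under the corresponding power of $\numg{\Phi}$, acting as $\pm\numg{\Psi}$.

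\textbf{Lifting.} Your justification here is false. $\Phi$ need not send skyscraper sheaves to shifted skyscrapers: Fourier--Mukai transforms on abelian varieties send them to line bundles, which are also stable for every $\sigma$. If it did, $\Phi$ would be standard by \cref{proposition: point preserving equivalences are standard}. Nor does $\varphi_\Phi$ a priori preserve $\{\id_X\}\times\Ker p^*$, which your \emph{commuting with $\Gamma$ up to an automorphism} presupposes. The paper argues instead as follows.
\begin{itemize}
\item $R_X=\Aut^0(X)\times\Pic^0(X)$ is an abelian variety, since $X$ is not uniruled.
\item Hence $\varphi_\Phi$ permutes the finite set $R_X[d]$, and some power of $\Phi$ commutes with $\tensfunc_L$.
\item One then lifts along a tower of cyclic covers.
\end{itemize}
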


\subsection{Lifting a power to the product cover}\label{subsection: lifting to the product cover}

For a finite \'{e}tale Galois cover $p:\widetilde X\to X$, its group of deck transformations is denoted by $\Gal(p)$.
We use the following form of Kawamata's structure theorem \cite[Theorem 13]{kawamata_1981_characterization_of_abelian_varieties}.

\begin{thm}[\cite{kawamata_1981_characterization_of_abelian_varieties} and {\cite[Theorem 3.2]{jiang_2011_an_effective_version_of_a_theorem_of_kawamata_on_the_albanese_map}}]
\label{theorem: Kawamata product cover}
If $a_X$ is finite, then there is a finite \'{e}tale Galois cover
\[
p:P=B\times Y\longrightarrow X
\]
such that $B$ is an abelian variety of dimension $\dim X-\kappa(X)$, $Y$ is a smooth variety of general type with finite Albanese morphism and dimension $\kappa(X)$, and $\Gal(p)$ is a finite abelian group acting diagonally, by translations on $B$.

The cover can be chosen as a connected component of a base change of $a_X$ by an isogeny.
\end{thm}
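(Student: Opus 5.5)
The plan is to assemble the cover from Kawamata's analysis of the image of the Albanese map, and to follow Jiang for the effective normalization. Set $A=\Alb(X)$ and $V=a_X(X)$; since $a_X$ is finite, $\dim V=\dim X$. First I would invoke Ueno's structure theorem for subvarieties of abelian varieties. Let $K\subset A$ be the connected component of the stabilizer $\{a\in A\mid t_a(V)=V\}$. Then $V/K\subset A/K$ is of general type, $\kappa(V)=\dim V-\dim K$, and $V\to V/K$ is the Iitaka fibration of a desingularization of $V$. Because $a_X$ is finite, Kawamata's comparison $\kappa(X)\geq\kappa(\widetilde V)$, together with the reverse inequality coming from the $K$-fibration structure, should give $\kappa(X)=\dim X-\dim K$. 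So $B$ will be an étale cover of $K$ and $\dim B=\dim X-\kappa(X)$, as required.

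Next, by Poincaré reducibility I would choose an abelian subvariety $A'\subset A$ complementary to $K$. This gives an isogeny $\mu\colon K\times A'\to A$, $(k,a')\mapsto k+a'$, whose kernel $F$ is finite abelian. Form the base change $X\times_A(K\times A')\to X$. It is finite étale and Galois with group $F$, with $F$ acting by translations on $K\times A'$. Take $P$ to be a connected component. Then $p\colon P\to X$ is finite étale Galois, and $\Gal(p)$ is the stabilizer $F_0\subset F$ of that component, which is abelian. The key point is that $\mu^{-1}(V)=K\times W'$, where $W'=\mu^{-1}(V)\cap A'$ up to finite index. This holds because $V$ is $K$-stable. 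Hence the finite map $P\to K\times A'$ lands in $K\times W'$.

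The main obstacle is upgrading ``$P$ maps finitely onto $K\times W'$'' to an actual splitting $P\cong B\times Y$. I would first try to show that translations by $K$ lift to a free action of a finite cover $B\to K$ on $P$. The argument would run as follows. The finite morphism $P\to K\times W'$ is determined by its restriction over $\{0\}\times W'$ together with monodromy along $K$. The group $\pi_1(K)$ is abelian and acts on the étale-local structure, and passing to a further isogeny $B\to K$ (enlarging $F$) kills this monodromy. This is exactly where Kawamata's proof uses that $a_X$ is finite, so that $P$ is normal and the Stein factorization is trivial; Jiang's effective version bounds the required isogeny. Once $B$ acts freely with quotient $Y=P/B$, the section over $\{0\}\times W'$ trivializes the action and yields $P\cong B\times Y$. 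Here $Y\to W'\subset A'$ is finite. Then $\dim Y=\kappa(X)=\kappa(Y)$, since $\kappa$ is invariant under étale covers and $B$ has $\kappa=0$, so $Y$ is of general type. The map $a_Y$ is finite because $Y\to A'$ factors through it.

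Finally, every element of $\Gal(p)\subset F\subset B\times A'$ (after the enlargement) is a pair $(b,a')$. It acts on $P=B\times Y$ by $t_b$ on the first factor and by the induced deck transformation on $Y$ over $t_{a'}$. Therefore the action is diagonal and by translations on $B$. This also gives the last sentence of the statement, since $P$ was constructed as a connected component of a base change of $a_X$ by an isogeny.
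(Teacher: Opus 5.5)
The paper gives no proof of this statement; it is quoted from Kawamata and Jiang. Your proposal has a genuine gap at its first step, and everything afterwards inherits it.

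You take the abelian factor to be an \'etale cover of the connected stabilizer $K$ of $V=a_X(X)$, and you assert $\kappa(X)=\dim X-\dim K$. Only the inequality $\kappa(X)\geq\kappa(\widetilde V)=\dim X-\dim K$ holds. The reverse inequality is false, because the finite map $X\to V$ can ramify along a divisor that is not $K$-stable, and the ramification contributes to the canonical class.

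Here is a counterexample. Let $A$ be an abelian surface, $L$ an ample line bundle with a smooth member $D\in|L^{\otimes2}|$, and $\pi\colon X\to A$ the double cover branched along $D$.
\begin{itemize}
\item $\omega_X\cong\pi^*L$ is ample, so $\kappa(X)=2$.
\item $h^1(\OO_X)=h^1(\OO_A)+h^1(L^{-1})=2$, so $\Alb(X)\to A$ is an isogeny and $a_X$ is finite.
\item $V=\Alb(X)=K$, so $\dim K=2$, while $\dim X-\kappa(X)=0$.
\end{itemize}
Your construction then gives $A'=0$ and $P=X$. The ``monodromy'' step would have to produce $X\cong B\times Y$ with $B$ an abelian surface and $Y$ a point, which is absurd. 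The underlying problem is that $P\to K\times W'$ need not be locally trivial in the $K$-direction. In the example its branch locus $D$ is ample, hence has finite stabilizer, and no isogeny of $K$ can repair that.

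The missing idea is that the abelian factor must be read off from $X$ itself, through its Iitaka fibration, not from $V$ via Ueno's theorem. A very general fibre $F$ of the Iitaka fibration has $\kappa(F)=0$ and a generically finite map to $\Alb(X)$. By Kawamata's characterization of abelian varieties, $F$ is birational to an abelian variety mapping isogenously onto a translate of an abelian subvariety $K'$. By rigidity $K'$ does not depend on $F$, and $\dim K'=\dim X-\kappa(X)$. One always has $K'\subset K$, but in the example above $K'=0\neq K$.

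With $K'$ in place of $K$, your base change along a Poincar\'e complement $K'\times A''\to\Alb(X)$ is the right setup. You still need a real argument that the resulting family of abelian varieties over the Stein factor becomes a product after a further isogeny of $K'$. That is exactly the content of the cited results of Kawamata and Jiang, and it uses the finiteness of $a_X$ essentially.

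Only after that do your concluding steps go through: $Y$ of general type with finite $a_Y$, and $\Gal(p)$ acting diagonally by translations on $B$. The diagonal-action step also needs that deck transformations permute the fibres $B\times\{y\}$. This holds because those fibres are the fibres of a map compatible with translations on the complementary factor.
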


\begin{dfn}\label{definition: connected automorphism group}
The \emph{connected automorphism group} $\Aut^0(X)$ is the identity component of the automorphism group scheme of $X$. It is a connected algebraic group since we work over $\CB$. Similarly, $\Pic^0(X)$ is the identity component of the Picard scheme; it parametrizes algebraically trivial line bundles. We put
\[
R_X\coloneqq\Aut^0(X)\times\Pic^0(X).
\]
\end{dfn}

An equivalence $\Phi:D^b(X)\to D^b(Y)$ induces an isomorphism of algebraic groups $R_X\cong R_Y$ (\cite[Section 2]{caucci_lombardi_2025_irregular_fibrations_of_derived_equivalent_varieties}).

\begin{dfn}\label{definition: Rouquier isomorphism and stability}
The \emph{Rouquier isomorphism} associated with an equivalence $\Phi\colon D^b(X)\to D^b(Y)$ is the isomorphism
$\varphi_\Phi:R_X\to R_Y$ characterized by the condition that $\varphi_\Phi(f,L)=(g,M)$ if and only if
\[
\Phi\circ f_*\circ\tensfunc_L\cong g_*\circ\tensfunc_M\circ\Phi.
\]
For an abelian subvariety $V\subset\Pic^0(X)$, we say that $V$ is \emph{Rouquier-stable with respect to $\Phi$} if
\[
\varphi_\Phi(\{\id_X\}\times V)
\subset\{\id_Y\}\times\Pic^0(Y).
\]
If $X=Y$, we say that $\varphi_\Phi$ \emph{preserves $V$} if $\varphi_\Phi(\{\id_X\}\times V)=\{\id_X\}\times V$.
\end{dfn}

Dualizing the inclusion $V\hookrightarrow\Pic^0(X)$ gives a quotient $\Alb(X)\to\widehat V$, using the identification $\widehat{\Pic^0(X)}\cong\Alb(X)$ (see \cite[Propositions 2.4.2 and 11.11.6]{book_birkenhake_lange_2004_complex_abelian_varieties}).
The composition $X\xrightarrow{a_X}\Alb(X)\to\widehat V$ is called the morphism associated with $V$.

\begin{thm}[{\cite[Proposition 4.3 and Remark 4.4]{krug_sosna_2015_equivalences_of_equivariant_derived_categories}}]
\label{theorem: lifting equivalences to cyclic covers}
Let $q:\widetilde X\to X$ be a connected cyclic \'{e}tale cover defined by a torsion line bundle $L$.
If $\Psi\in\Auteq(X)$ satisfies $\Psi\circ \tensfunc_L\cong \tensfunc_L\circ\Psi$, then it has a lift $\widetilde\Psi\in\Auteq(\widetilde X)$ satisfying
$\widetilde\Psi \circ q^*\cong q^*\circ\Psi$ and $q_*\circ\widetilde\Psi\cong\Psi\circ q_*$.
\end{thm}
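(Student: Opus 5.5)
The plan is to identify $D^b(\widetilde X)$ with an equivariant category built from $D^b(X)$, and then lift $\Psi$ by making it equivariant. The main technical point is normalizing the commutation isomorphism so that it satisfies the cocycle condition.

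\textbf{Step 1: the equivariant description.} Let $m$ be the order of $L$, so that $q_*\OO_{\widetilde X}\cong\bigoplus_{i=0}^{m-1}L^{\otimes i}$ and $q^*L\cong\OO_{\widetilde X}$. The cyclic group $G=\ZZ/m$ acts on $D^b(X)$ through the functor $\tensfunc_L$. To make this a genuine group action one fixes an isomorphism $L^{\otimes m}\cong\OO_X$, which gives $\tensfunc_L^m\cong\id$ compatibly. I would then invoke the standard equivalence (Elagin; Krug--Sosna)
\[
D^b(\widetilde X)\;\simeq\;D^b(X)^{G},
\]
the category of $G$-equivariant objects, i.e.\ objects $F$ with an isomorphism $F\otimes L\cong F$ whose $m$-th iterate is the identity. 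Under this equivalence:
\begin{itemize}
\item $q_*$ corresponds to the forgetful functor, since $q_*E\otimes L\cong q_*(E\otimes q^*L)\cong q_*E$ canonically;
\item $q^*$ corresponds to the induction functor $F\mapsto\bigoplus_{i}F\otimes L^{\otimes i}$, consistent with $q_*q^*F\cong F\otimes q_*\OO_{\widetilde X}$.
\end{itemize}
Rather than working with abstract equivariant triangulated categories, I would carry this out at the level of dg enhancements or Fourier--Mukai kernels, which is where Krug--Sosna do it.

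\textbf{Step 2: making $\Psi$ equivariant.} We are given an isomorphism $\theta\colon\Psi\circ\tensfunc_L\cong\tensfunc_L\circ\Psi$. Iterating it $m$ times and using the trivializations $\tensfunc_L^m\cong\id$ gives an automorphism of $\Psi$. Write $\Psi=\Phi_K$. Automorphisms of $\Psi$ are automorphisms of the kernel $K$, and since $\Psi$ is an equivalence, $\End(K)\cong\Hom(\OO_\Delta,\OO_\Delta)=\CB$. So this automorphism is a scalar $\lambda\in\CB^*$. Replacing $\theta$ by $\lambda^{-1/m}\theta$ makes the composite the identity, which is exactly the cocycle condition for a $G$-equivariant structure on $\Psi$.

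I expect this normalization to be the main obstacle. One must make sure that the iterated isomorphism really lives in $\Aut(\Psi)$ and not in some twisted version of it. One must also check that passing to kernels is legitimate, i.e.\ that natural isomorphisms of functors come from isomorphisms of kernels. I would handle this by phrasing everything in terms of $K$ and $(L\boxtimes L^{-1})\otimes K\cong K$, where all identifications are canonical.

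\textbf{Step 3: the induced lift.} An equivariant functor induces a functor $\widetilde\Psi$ on $D^b(X)^G$, hence on $D^b(\widetilde X)$. It commutes with the forgetful functor by construction, which gives $q_*\circ\widetilde\Psi\cong\Psi\circ q_*$. It also commutes with induction: $\theta$ identifies $\Psi(\bigoplus_i F\otimes L^{\otimes i})$ with $\bigoplus_i\Psi(F)\otimes L^{\otimes i}$, and this identification is compatible with the equivariant structures. This gives $\widetilde\Psi\circ q^*\cong q^*\circ\Psi$.

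\textbf{Step 4: $\widetilde\Psi$ is an autoequivalence.} The inverse $\Psi^{-1}$ commutes with $\tensfunc_L$ via $\theta^{-1}$, and this isomorphism is again normalized. So $\Psi^{-1}$ induces a functor which is inverse to $\widetilde\Psi$, since the unit and counit isomorphisms are equivariant. Hence $\widetilde\Psi\in\Auteq(\widetilde X)$. It is of Fourier--Mukai type by Orlov's theorem, or directly because its kernel can be built from the $G\times G$-equivariant descent of $K$.
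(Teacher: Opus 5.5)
The paper does not prove this statement. It quotes Krug--Sosna, and later invokes the argument of Bridgeland--Maciocia, Theorem 4.5.

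Your proposal is correct and shares the core of that argument. A simple object invariant under a \emph{cyclic} group can be linearized, because the only obstruction is a single scalar, and that scalar has an $m$-th root; equivalently, $H^2(\ZZ/m,\CB^*)=0$.

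Drop the sentence claiming that automorphisms of $\Psi$ are automorphisms of $K$. Natural transformations of Fourier--Mukai functors need not come from morphisms of kernels. Your fallback is the right fix: Orlov's uniqueness of kernels turns the hypothesis into \emph{some} isomorphism $K\otimes(L\boxtimes L^{-1})\cong K$, and you normalize that one using $\End(K)\cong\End(\OO_\Delta)=\CB$.

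Where you differ is in how the lift is produced.
\begin{itemize}
\item \emph{Your route.} You use the identification $D^b(\widetilde X)\simeq D^b(X)^{\ZZ/m}$ and take the induced functor. This makes both intertwining isomorphisms and invertibility formal. The price is that it relies on Elagin's equivariant triangulated machinery, both for the identification and for exactness of the induced functor.
\item \emph{A kernel-level route.} Let $\rho\colon Z\to X\times X$ be the cyclic cover defined by $L\boxtimes L^{-1}$, i.e.\ $Z=(\widetilde X\times\widetilde X)/\Gal(q)$ with the diagonal action. The normalized isomorphism gives $\End(\rho^*K)\cong\CB[\ZZ/m]$. So $\rho^*K$ splits, using only idempotent completeness, into eigen-summands permuted by the deck group, and each summand $Q$ satisfies $\rho_*Q\cong K$. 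The pullback $\widetilde K$ of $Q$ to $\widetilde X\times\widetilde X$ carries a canonical diagonal linearization. Since $\widetilde X\times\widetilde X\cong Z\times_{X\times X}(\widetilde X\times X)$, flat base change gives $q_*\circ\Phi_{\widetilde K}\cong\Psi\circ q_*$.
\end{itemize}
The kernel route needs a separate check that $\Phi_{\widetilde K}$ is invertible. In exchange, it avoids the equivariant triangulated machinery and produces the lift as an explicit diagonally linearized kernel. That is the same form the paper uses for descent in \cref{section: examples of positive categorical entropy}.
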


\begin{lem}[{cf.\ \cite[Proposition 4.2]{yoshida_2026_a_note_on_categorical_entropy_of_bielliptic_surfaces_and_enriques_surfaces}}]\label{lemma: categorical entropy of lifts}
Let $p:\widetilde X\to X$ be a finite \'{e}tale Galois cover. Suppose that $\widetilde\Phi\in\Auteq(\widetilde X)$ and $\Phi\in\Auteq(X)$ satisfy $\widetilde\Phi\circ p^*\cong p^*\circ\Phi$ and $p_*\circ\widetilde\Phi\cong\Phi\circ p_*$. Then $h_t(\widetilde\Phi)=h_t(\Phi)$ for every $t\in\RB$.
\end{lem}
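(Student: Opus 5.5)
We compare the total weighted Hom growth on $\widetilde X$ and on $X$, using that $p^*$ and $p_*$ intertwine the two functors and that $p^*$, $p_*$ preserve classical generators. By \cref{theorem: categorical entropy and total Hom comparison}, $h_t$ may be computed as the growth rate of $\sum_i\hom^i(G,\Phi^nG')e^{-it}$ for any pair of classical generators. So the plan is to produce generators on the two sides whose Hom spaces match exactly, via adjunction.

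First we fix a classical generator $G$ of $D^b(X)$ and check that $p^*G$ is a classical generator of $D^b(\widetilde X)$. Since $p$ is finite étale and Galois with group $\Gamma=\Gal(p)$, $p_*\OO_{\widetilde X}$ is locally free. For any $F\in D^b(\widetilde X)$ we have $p^*p_*F\cong\bigoplus_{\gamma\in\Gamma}\gamma^*F$, so $F$ is a direct summand of $p^*(p_*F)$. Since $p_*F\in\thcl{G}$ and $p^*$ is exact, $p^*p_*F\in\thcl{p^*G}$, hence $F\in\thcl{p^*G}$. Conversely, $p_*$ of a generator $\widetilde G$ of $D^b(\widetilde X)$ generates $D^b(X)$: every $E$ is a summand of $p_*p^*E\cong E\otimes p_*\OO_{\widetilde X}$, because $\OO_X$ splits off $p_*\OO_{\widetilde X}$ via the trace in characteristic $0$.

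Next we apply adjunction $p^*\dashv p_*$ together with the intertwining relations. We get
\[
\Hom(p^*G,\widetilde\Phi^n p^*G[i])\cong\Hom(p^*G,p^*\Phi^nG[i])\cong\Hom(G,p_*p^*\Phi^nG[i])\cong\Hom(G,\Phi^n(G\otimes p_*\OO_{\widetilde X})[i]),
\]
using $\widetilde\Phi^n p^*\cong p^*\Phi^n$ and $p_*p^*E\cong E\otimes p_*\OO_{\widetilde X}$. Here $G'\coloneqq G\otimes p_*\OO_{\widetilde X}$ is again a classical generator of $D^b(X)$, since $G$ is a summand of it. Therefore
\[
\sum_i\hom^i(p^*G,\widetilde\Phi^np^*G)e^{-it}=\sum_i\hom^i(G,\Phi^nG')e^{-it}.
\]
Taking $\frac1n\log$ and the limit, \cref{theorem: categorical entropy and total Hom comparison}(2) gives $h_t(\widetilde\Phi)=h_t(\Phi)$, because the entropy is independent of the choice of generators on each side.

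This argument uses only $\widetilde\Phi\circ p^*\cong p^*\circ\Phi$. The relation involving $p_*$ could give an alternative route through $\Hom(p_*\widetilde G,\Phi^n p_*\widetilde G)$, but it is not needed. The main point to verify carefully is the generation statement for $p^*G$, namely the decomposition $p^*p_*F\cong\bigoplus_\gamma\gamma^*F$. This follows from the Cartesian square $\widetilde X\times_X\widetilde X\cong\Gamma\times\widetilde X$ together with flat base change, which holds for a Galois étale cover.
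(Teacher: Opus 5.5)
Your proposal is correct and is essentially the paper's proof: you show that $p^*G$ is a classical generator via $p^*p_*F\cong\bigoplus_{\gamma}\gamma^*F$, use the trace splitting to see that $p_*p^*G\cong G\otimes p_*\OO_{\widetilde X}$ is a classical generator, and identify $\sum_i\hom^i(p^*G,\widetilde\Phi^np^*G)e^{-it}$ with $\sum_i\hom^i(G,\Phi^n(p_*p^*G))e^{-it}$ by adjunction.

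One correction concerns the last step of your chain, $\Phi^nG\otimes p_*\OO_{\widetilde X}\cong\Phi^n(G\otimes p_*\OO_{\widetilde X})$. This step does not follow from $\widetilde\Phi p^*\cong p^*\Phi$ alone in the way you used it. It needs $p_*\widetilde\Phi^n\cong\Phi^np_*$, which is the second hypothesis that the paper invokes, and which also follows from the first relation by taking right adjoints, since both functors are equivalences. So your remark that the $p_*$-relation is not needed should be amended. Alternatively, you can bypass the step by writing $\hom^i(G,\Phi^nG\otimes V)=\hom^i(G\otimes V^{\vee},\Phi^nG)$ with $V=p_*\OO_{\widetilde X}$, and noting that $G\otimes V^{\vee}$ is a classical generator.
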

\begin{proof}
Let $G$ be a classical generator of $D^b(X)$.
For every $E\in D^b(\widetilde X)$, the object $p^*p_*E\cong\bigoplus_{\gamma\in\Gal(p)}\gamma_*E$ belongs to $\thcl{p^*G}$ and contains $E$ as a direct summand.
Thus, $p^*G$ is a classical generator of $D^b(\widetilde X)$.
The trace splitting makes $G$ a direct summand of $p_*p^*G$, so the latter is a classical generator of $D^b(X)$.
Adjunction and the assumed isomorphisms of functors give
\[
\sum_{i\in\ZZ}\hom^i(p^*G,\widetilde\Phi^np^*G)e^{-it}
=\sum_{i\in\ZZ}\hom^i(G,\Phi^n(p_*p^*G))e^{-it}.
\]
The assertion follows from \cref{theorem: categorical entropy and total Hom comparison}.
\end{proof}

For a connected finite \'{e}tale Galois cover $p:\widetilde X\to X$ with group $\Gamma=\Gal(p)$, there is a natural isomorphism
\[
\Hom(\Gamma,\CB^*)\cong\Ker\bigl(p^*:\Pic(X)\to\Pic(\widetilde X)\bigr), 
\]
see \cite[Section 4]{krug_sosna_2015_equivalences_of_equivariant_derived_categories}.
Write $L_\chi$ for the line bundle corresponding to a character $\chi$.
Its order equals that of $\chi$, and pullback from $X$ to $\widetilde X/\Gamma'$ corresponds to restriction from $\Gamma$ to $\Gamma'$ for every subgroup $\Gamma'\subset\Gamma$.
If $\Gamma$ is cyclic and $\chi$ is faithful, then $p$ is the cyclic cover defined by $L_\chi$.

\begin{prop}\label{proposition: lifting a power to the product cover}
Let $X$ have finite Albanese morphism, and let $p:\widetilde X\to X$ be a connected finite \'{e}tale Galois cover with abelian group $\Gal(p)$.
Assume that
\[
\Ker\bigl(p^*:\Pic(X)\to\Pic(\widetilde X)\bigr)
\subset\Pic^0(X).
\]
For every $\Phi\in\Auteq(X)$, there are an integer $m>0$ and $\widetilde\Phi\in\Auteq(\widetilde X)$ such that $\widetilde\Phi\circ p^*\cong p^*\circ\Phi^m$ and $p_*\circ\widetilde\Phi\cong\Phi^m\circ p_*$.
Moreover, $h_t(\widetilde\Phi)=mh_t(\Phi)$ for all $t\in\RB$.
\end{prop}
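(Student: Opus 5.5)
Since $\Gal(p)$ is a finite abelian group, we decompose $p$ into a tower of cyclic covers and lift one step at a time using \cref{theorem: lifting equivalences to cyclic covers}. Write $\Gamma=\Gal(p)$ and choose a chain of subgroups $\Gamma=\Gamma_0\supset\Gamma_1\supset\cdots\supset\Gamma_k=\{1\}$ with $\Gamma_{j}/\Gamma_{j+1}$ cyclic. Put $X_j=\widetilde X/\Gamma_j$. Then $X_0=X$, $X_k=\widetilde X$, and each $q_j:X_{j+1}\to X_j$ is a connected cyclic \'etale cover defined by a torsion line bundle $L_j\in\Ker(q_j^*)$. Each $X_j$ is finite \'etale over $X$, so it again has finite Albanese morphism. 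We also need the kernel hypothesis at each stage: $\Ker(X_j\to\widetilde X)^*\subset\Pic^0(X_j)$. For this, note that $\Ker$ of pullback along $\widetilde X\to X_j$ corresponds to $\Hom(\Gamma_j,\CB^*)$. Every character of $\Gamma_j$ extends to a character of $\Gamma$, and by the restriction compatibility recalled above, the corresponding line bundle on $X_j$ is the pullback of some $L_\chi\in\Pic^0(X)$. Hence it lies in $\Pic^0(X_j)$. By induction it therefore suffices to treat a single cyclic step: given $\Psi\in\Auteq(X)$ and a torsion $L\in\Pic^0(X)$ defining the cover, find $m$ with $\Psi^m\circ\tensfunc_L\cong\tensfunc_L\circ\Psi^m$.

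For the cyclic step we use the Rouquier isomorphism $\varphi_\Psi:R_X\to R_X$ from \cref{definition: Rouquier isomorphism and stability}. We have $\Psi\circ\tensfunc_L\circ\Psi^{-1}\cong g_*\circ\tensfunc_M$, where $(g,M)=\varphi_\Psi(\id,L)$. Since $L$ has order $d$, the pair $(g,M)$ is a $d$-torsion point of the algebraic group $R_X$. The key input is that, because $a_X$ is finite, $\Aut^0(X)$ acts on $\Alb(X)$ by translations compatibly with $a_X$ and hence is an abelian variety (or trivial). So $R_X$ is an abelian variety, and its $d$-torsion subgroup $R_X[d]$ is finite by \cref{proposition: basic facts about abelian varieties}. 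The automorphism $\varphi_\Psi$ permutes this finite set, so some power $\varphi_\Psi^m=\varphi_{\Psi^m}$ fixes $(\id,L)$. This gives $\Psi^m\circ\tensfunc_L\cong\tensfunc_L\circ\Psi^m$, and \cref{theorem: lifting equivalences to cyclic covers} yields a lift $\widetilde\Psi$ with $\widetilde\Psi q^*\cong q^*\Psi^m$ and $q_*\widetilde\Psi\cong\Psi^m q_*$. One subtlety is that $m$ should be chosen uniformly over a generator of the cyclic kernel. This is automatic, since the cover is defined by a single $L$.

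Iterating along the tower, take $\Phi_0=\Phi$. At stage $j$ we obtain $m_j$ and a lift $\Phi_{j+1}$ of $\Phi_j^{m_j}$. Setting $m=\prod m_j$ and $\widetilde\Phi=\Phi_k$, the intertwining relations compose to give $\widetilde\Phi p^*\cong p^*\Phi^m$ and $p_*\widetilde\Phi\cong\Phi^m p_*$, using that lifts of powers are powers of lifts up to the chosen relation. Composing the relations is straightforward, since $q_j^*$ and $q_{j*}$ intertwine $\Phi_{j+1}^{r}$ with $\Phi_j^{m_jr}$ for all $r$. Finally, \cref{lemma: categorical entropy of lifts} together with \cref{proposition: basic properties of categorical entropy}(1) gives $h_t(\widetilde\Phi)=h_t(\Phi^m)=mh_t(\Phi)$.

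The main obstacle is the finiteness step: proving that $R_X$, or at least the relevant torsion, is finite. This needs $\Aut^0(X)$ to be an abelian variety when $a_X$ is finite, which follows from the fact that a connected group acting on $X$ with $a_X$ finite acts with finite stabilizers, so it has no affine part. A secondary point is checking that the Rouquier image of $(\id,L)$ can be handled entirely inside torsion points. Since $\varphi_\Psi$ is a group isomorphism, this is formal.
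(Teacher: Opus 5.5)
Your proposal is correct and follows essentially the same route as the paper. Both decompose $p$ into a tower of connected cyclic covers, show each $L_j\in\Pic^0(X_j)$ by extending characters to $\Gal(p)$, and in the cyclic step use finiteness of $R_X[d]$ to get a power of the Rouquier isomorphism fixing $(\id,L)$, then lift and conclude with \cref{lemma: categorical entropy of lifts}. The only minor difference is how $R_X$ is shown to be an abelian variety: you argue directly that the affine part of $\Aut^0(X)$ acts trivially on $\Alb(X)$ and hence on the finite fibres of $a_X$, whereas the paper deduces it from non-uniruledness via Brion's structure theorem.
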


\begin{proof}
We first suppose that $p$ is cyclic, defined by a line bundle
$L\in\Pic^0(X)$ of order $d$.
Since $X$ has finite Albanese morphism, it is not uniruled.
Thus, $R_X$ is an abelian variety by
\cite[Proposition 7.1.4]{brion_2017_some_structure_theorems_for_algebraic_groups}.
The Rouquier automorphism $\varphi_\Phi$ permutes the finite group
$R_X[d]$, so some positive power fixes $(\id_X,L)$.
Thus, for some $m>0$,
\[
    \Phi^m\circ\tensfunc_L\cong\tensfunc_L\circ\Phi^m.
\]
The argument of
\cite[Theorem 4.5]{bridgeland_maciocia_2017_fouriermukai_transforms_for_quotient_varieties} gives a lift
$\widetilde\Phi\in\Auteq(\widetilde X)$ satisfying
\[
    \widetilde\Phi\circ p^*\cong p^*\circ\Phi^m,
    \qquad
    p_*\circ\widetilde\Phi\cong\Phi^m\circ p_*.
\]

Next, we show the general cases.
Since $\Gal(p)$ is abelian, repeatedly choosing a maximal proper subgroup
gives a chain $\Gal(p)=\Gamma_0\supset\cdots\supset\Gamma_s=\{1\}$
with cyclic quotients of prime order.
Setting $X_i=\widetilde X/\Gamma_i$ gives a tower
\[
    \widetilde X=X_s\xrightarrow{q_s}X_{s-1}\longrightarrow\cdots
    \longrightarrow X_1\xrightarrow{q_1}X_0=X
\]
of connected cyclic \'etale covers with
$\Gal(q_i)=\Gamma_{i-1}/\Gamma_i$.
Let $d_i=\deg q_i$, and let $L_i$ be the line bundle on $X_{i-1}$
corresponding to a faithful character of $\Gamma_{i-1}/\Gamma_i$.
Since $\Gamma$ is abelian, this character, viewed as a character
of $\Gamma_{i-1}$, extends to $\Gamma$.
By the character correspondence above, $L_i$ is the pullback
along $X_{i-1}\to X$ of a line bundle trivialized by $p$.
Hence $L_i\in\Pic^0(X_{i-1})$ and $\ord(L_i)=d_i$.
Moreover, the composite $X_i\to X\xrightarrow{a_X}\Alb(X)$ is finite,
so each $X_i$ has finite Albanese morphism.

Applying the cyclic case successively, starting with $\Psi_0=\Phi$,
we choose integers $k_i>0$ and autoequivalences $\Psi_i\in\Auteq(X_i)$
such that $\Psi_i$ lifts $\Psi_{i-1}^{k_i}$ along $q_i$.
The lifting isomorphisms are preserved under taking powers
and compose along the tower.
Thus, $\widetilde\Phi=\Psi_s$ gives the required lift of $\Phi^m$,
where $m=\prod_{i=1}^s k_i$.
Finally,
\[
    h_t(\widetilde\Phi)=h_t(\Phi^m)=mh_t(\Phi)
\]
for every $t\in\RB$ by
\cref{lemma: categorical entropy of lifts,proposition: basic properties of categorical entropy}.
\end{proof}

Suppose that $p:\widetilde X\to X$ is a connected component of the base change of an isogeny $v:A'\to\Alb(X)$ along $a_X$.
Identify $\Gal(p)$ with the stabilizer of this component in $\Ker(v)$.
Since restriction of characters from $\Ker(v)$ to $\Gal(p)$ is surjective, the preceding correspondence and \cite[Proposition 2.4.3]{book_birkenhake_lange_2004_complex_abelian_varieties} give
\[
\Ker(p^*)=a_X^*\Ker(v^*)\subset\Pic^0(X),
\]
where both kernels are taken on Picard groups.
Thus, \cref{proposition: lifting a power to the product cover} applies to the cover in \cref{theorem: Kawamata product cover}, and also to every isogeny of abelian varieties.

\subsection{A normal form over the general-type factor}\label{subsection: relative normal form}
We next describe a normal form for autoequivalences on the \'etale cover.

\begin{lem}[{\cite[Lemma 2.12]{fu_li_zhao_2022_stability_manifolds_of_varieties_with_finite_albanese_morphisms}}]\label{lemma: Picard invariant objects have finite support}
Let $A$ be an abelian variety and $E\in D^b(A)$. If $E\otimes L\cong E$ for all $L\in\Pic^0(A)$, then $E$ has finite support.
\end{lem}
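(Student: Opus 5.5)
Let $E\neq 0$ with $E\otimes L\cong E$ for every $L\in\Pic^0(A)$. The plan is to show that every cohomology sheaf $\mathcal H^j(E)$ has finite support, using the Fourier--Mukai transform $\Phi_{\PC_A}\colon D^b(A)\to D^b(\widehat A)$ (Mukai's equivalence) together with the compatibility
\[
\Phi_{\PC_A}\circ\tensfunc_{P_\alpha}\cong t_\alpha^*\circ\Phi_{\PC_A}
\]
for $\alpha\in\widehat A$, up to the sign convention on $\alpha$. The idea is to trade tensor-invariance on $A$ for translation-invariance on $\widehat A$, which is much easier to exploit.

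First, since tensoring by a line bundle is exact, $E\otimes L\cong E$ gives $\mathcal H^j(E)\otimes L\cong\mathcal H^j(E)$ for each $j$. So we may reduce to a coherent sheaf $F=\mathcal H^j(E)$ with $F\otimes P_\alpha\cong F$ for all $\alpha$.

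Next, we show that $\Supp F$ is finite. I would work directly on $A$ via the Chern character:
\[
\ch(F\otimes P_\alpha)=\ch(F)\,e^{c_1(P_\alpha)},
\]
but $c_1(P_\alpha)$ is zero in rational cohomology, so this gives nothing. Instead I use the Fourier--Mukai transform. Put $\widehat F=\Phi_{\PC_A}(F)$. Then $t_\alpha^*\widehat F\cong\widehat F$ for all $\alpha$, so each cohomology sheaf $\mathcal H^k(\widehat F)$ is a translation-invariant coherent sheaf on $\widehat A$. Its support is therefore a closed subset invariant under all translations, hence either empty or all of $\widehat A$. The sheaf is then locally free on a dense open set, and by generic flatness and translation invariance it is a homogeneous vector bundle, since coherent sheaves with constant fiber dimension on a reduced variety are locally free. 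By Mukai's classification (\cite[Section 14]{book_birkenhake_lange_2004_complex_abelian_varieties}), a homogeneous vector bundle is a successive extension of elements of $\Pic^0(\widehat A)$. Each such line bundle is the transform, up to shift, of a skyscraper sheaf of a point of $A$. Consequently $\widehat F$ lies in the extension-closed subcategory generated by the transforms of shifted skyscraper sheaves. Applying the inverse transform, $E$ lies in the extension closure of shifted skyscraper sheaves $k(x)[n]$, and finitely many of them suffice. Its cohomology sheaves are therefore supported on finitely many points.

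The main obstacle is the step from translation-invariance to homogeneity of each $\mathcal H^k(\widehat F)$. It needs the invariance to hold for the sheaf with all points at once, not just pointwise isomorphisms, and it needs the finite filtration to be controlled. If I want to avoid invoking the classification of homogeneous bundles, I would try first a more elementary route: a translation-invariant coherent sheaf has constant fiber dimension, so it is locally free. Its transform is then a complex whose support is contained in the locus where the relevant cohomology of $\mathcal H^k(\widehat F)\otimes P$ is nonzero. For a homogeneous bundle this locus is finite, by computing cohomology of the successive extension by degree-zero line bundles, which is nonzero only for the trivial one. This support computation, combined with a spectral sequence from the cohomology sheaves of $\widehat F$ to those of $E$, gives $\Supp E$ finite.
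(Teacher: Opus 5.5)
Your argument is correct. The paper does not prove this lemma itself. It quotes it from Fu--Li--Zhao and uses it only as a black box, in the proof that Picard-preserving autoequivalences are standard, so there is no internal argument to match. Your proof is a sound self-contained justification, resting on standard facts:
\begin{itemize}
\item the exchange formula $\Phi_{\PC_A}(F\otimes P_\alpha)\cong t_\alpha^*\Phi_{\PC_A}(F)$, which follows from $(\id_A\times t_\alpha)^*\PC_A\cong\PC_A\otimes\pi_1^*P_\alpha$ and flat base change;
\item the fact that translation-invariant coherent sheaves on $\widehat A$ are homogeneous bundles;
\item Mukai's structure theorem, which filters these bundles with quotients in $\Pic^0(\widehat A)$;
\item the identity $\Phi_{\PC_A}(k(x))=\PC_A|_{\{x\}\times\widehat A}$, which by biduality exhausts $\Pic^0(\widehat A)$ (no shift is even needed).
\end{itemize}
Hence the inverse transform lands in the extension closure of finitely many $k(x)[n]$, and every object there has finite support. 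In effect you are re-deriving Mukai's correspondence between homogeneous bundles on $\widehat A$ and finite-length sheaves on $A$.

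\textbf{The flagged obstacle.} It is not a real one. Homogeneity in Mukai's sense is exactly the pointwise condition $t_\alpha^*\mathcal G\cong\mathcal G$ for each $\alpha$, which is what you have for $\mathcal G=\mathcal H^k(\widehat F)$. Local freeness follows from constancy of the fibre dimension on the reduced scheme $\widehat A$, or by translating a dense open set on which $\mathcal G$ is locally free. All filtrations are finite because $\widehat F$ is bounded and each $\mathcal H^k(\widehat F)$ has finite rank.

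\textbf{Tidying.}
\begin{itemize}
\item The reduction to $F=\mathcal H^j(E)$ is unnecessary, since the cohomology sheaves of $\Phi_{\PC_A}(E)$ are already translation-invariant. As written, your conclusion speaks of $E$ where only $F$ has been treated.
\item The Chern-character aside can be dropped.
\item The ``more elementary route'' can also be dropped. It still uses the extension structure of homogeneous bundles, so it is not actually more elementary.
\end{itemize}

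\textbf{An elementary alternative.} If you want to avoid Fourier--Mukai theory altogether, there is a genuinely elementary argument. Suppose a component $Z$ of $\Supp F$ had positive dimension. Let $\nu:\widetilde Z\to Z$ be a resolution and let $r>0$ be the generic rank of $F$ along $Z$. Comparing determinants of $\nu^*F$ modulo torsion gives $\nu^*P_\alpha^{\otimes r}\cong\OO_{\widetilde Z}$ for all $\alpha$. By divisibility of $\widehat A$, $\nu^*$ then kills $\Pic^0(A)$. But $\nu^*$ on $\Pic^0$ is the dual of the homomorphism $\Alb(\widetilde Z)\to A$ induced by $\nu$, composed with an isomorphism. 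That dual is nonzero because $\nu$ is nonconstant, which is a contradiction.
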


\begin{prop}[{\cite[Proposition 1.12]{ana_darío_carlos_2013_relative_fouriermukai_transforms_for_weierstrass_fibrations_abelian_schemes_and_fano_fibrations}}]
\label{proposition: point preserving equivalences are standard}
Let $\Phi:D^b(X)\to D^b(Y)$ be an equivalence. Suppose that, for every closed point $x\in X$, there are a closed point $y_x\in Y$ and an integer $t_x$ with
$\Phi(\OO_x)\cong\OO_{y_x}[t_x]$.
Then the integers $t_x$ have a common value $t$, and there are an isomorphism $f:X\to Y$ and a line bundle $L\in\Pic(X)$ such that
$\Phi\cong f_*\circ\tensfunc_L[t]$.
\end{prop}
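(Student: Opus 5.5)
The proof reduces to the characterization of equivalences via kernels, using the skyscraper sheaves as a test family. Set $K\in D^b(X\times Y)$ to be the Fourier--Mukai kernel of $\Phi$, which exists by Orlov's theorem. For each closed point $x$ we have $K|_{\{x\}\times Y}\cong\Phi(\OO_x)\cong\OO_{y_x}[t_x]$.

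The first step is to show that $x\mapsto t_x$ is constant. Cohomology sheaves of $K$ restricted to fibres over $X$ are controlled by semicontinuity. The set $\{x\mid K|_{\{x\}\times Y}\text{ has cohomology in degree }-t\}$ is constructible. Since each fibre has a single cohomology sheaf, a standard argument (e.g.\ \cite[Lemma 3.31]{book_huybrechts_2006_fouriermukai_transforms_in_algebraic_geometry}) shows that $K[t]$ is a sheaf flat over $X$ for the locally constant, hence constant since $X$ is connected, value $t$.

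Next, replace $\Phi$ by $\Phi[-t]$. Then $K$ is a coherent sheaf on $X\times Y$, flat over $X$, whose fibre over each $x$ is a skyscraper of length one. Such a sheaf is the structure sheaf of a closed subscheme $\Gamma\subset X\times Y$ twisted by a line bundle, and the projection $\Gamma\to X$ is an isomorphism. Indeed, $\pi_{X*}K$ is locally free of rank one, and $\OO_{X\times Y}\to\mathcal{E}nd(K)$ realizes $K$ as a line bundle on its scheme-theoretic support. Thus $\Gamma$ is the graph of a morphism $f:X\to Y$ and $K\cong(\id,f)_*L$ with $L\in\Pic(X)$, which gives $\Phi\cong f_*\circ\tensfunc_L$.

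The final step is to show that $f$ is an isomorphism. Since $\Phi$ is an equivalence, $f$ is injective on closed points, because $\Hom(\OO_{y_x},\OO_{y_{x'}})=0$ for $x\neq x'$. It is also surjective, since any $\OO_y$ is $\Phi$ of some object whose fibres force it to be a point sheaf; alternatively, apply the same argument to $\Phi^{-1}$. Moreover, $\Ext^1$ is preserved, so $f$ induces isomorphisms on tangent spaces. Hence $f$ is a bijective étale morphism of varieties, and therefore an isomorphism.

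The main obstacle is the flatness and constancy step in the first paragraph. I would handle it by the standard criterion: a complex on $X\times Y$ whose derived restrictions to all fibres $\{x\}\times Y$ are sheaves is a sheaf flat over $X$. This is proved by descending induction on the top cohomology degree using the spectral sequence for derived restriction.
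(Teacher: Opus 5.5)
You follow the standard route for this result; the paper gives no proof of its own and only quotes it. The route is:
\begin{enumerate}
\item the kernel is a shifted sheaf, flat over $X$;
\item a flat family of length-one skyscrapers is a line bundle on the graph of a morphism $f$;
\item full faithfulness on points and on $\Ext^1(\OO_x,\OO_x)\cong T_xX$ makes $f$ a bijective \'etale map.
\end{enumerate}
Your second and third steps are fine in substance. The gap is in the first step, which is exactly where the conclusion that the $t_x$ have a common value has to be proved. Constructibility of the level sets $Z_i=\{x\mid t_x=-i\}$ gives no local constancy: a closed point and its complement are disjoint constructible sets covering a connected $X$. Lemma 3.31 of Huybrechts' book, and the descending induction you sketch, cannot supply the constant either. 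That criterion assumes every derived fibre $\mathbf{L}i_x^*K$ is a sheaf in one fixed degree, so it applies to $K[-t]$ (not $K[t]$) only after constancy is already known.

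The missing ingredient is a closedness statement, and semicontinuity does give it. Since $\pi_X$ is flat and proper and $K$ is perfect, $\mathbf{R}\pi_{X*}K$ is a perfect complex on $X$. Its derived fibre at $x$ is $\mathbf{R}\Gamma(Y,\mathbf{L}i_x^*K)\cong\mathbf{R}\Gamma(Y,\OO_{y_x}[t_x])$, so $x\mapsto\dim H^i(Y,\mathbf{L}i_x^*K)$ is upper semicontinuous. This dimension is $1$ if $t_x=-i$ and $0$ otherwise, so every $Z_i$ is closed. Only finitely many $Z_i$ are nonempty because $K$ is bounded. Thus the $Z_i$ are finitely many disjoint closed sets covering $X$, each is open and closed, and connectedness of $X$ leaves a single value $t$. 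Alternatively, the spectral sequence for $\mathbf{L}i_x^*$ shows that the top cohomology sheaf $\mathcal{H}^m(K)$ is flat over $X$, so $Z_m=\pi_X(\Supp\mathcal{H}^m(K))$ is open as well as closed. Only after this does Lemma 3.31 apply to $K[-t]$.

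A smaller point: ``apply the same argument to $\Phi^{-1}$'' is circular as a proof that $f$ is surjective, since you do not yet know that $\Phi^{-1}$ sends points to shifted points. Surjectivity is automatic anyway. Once $df$ is an isomorphism everywhere, $f$ is \'etale, hence open, and it is proper, so $f(X)$ is open and closed in the connected $Y$.
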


\begin{lem}\label{lemma: Picard preserving equivalences are standard}
Let $X$ have finite Albanese morphism.
If the Rouquier automorphism of $\Psi\in\Auteq(X)$ preserves $\Pic^0(X)$, then $\Psi$ is standard.
\end{lem}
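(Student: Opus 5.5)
The plan is to verify the hypothesis of \cref{proposition: point preserving equivalences are standard}: $\Psi$ sends every skyscraper sheaf to a shifted skyscraper sheaf. Since $\varphi_\Psi$ preserves $\{\id_X\}\times\Pic^0(X)$, for every $L\in\Pic^0(X)$ there is $M\in\Pic^0(X)$ with $\Psi\circ\tensfunc_L\cong\tensfunc_M\circ\Psi$. Because $\OO_x\otimes L\cong\OO_x$, we get
\[
\Psi(\OO_x)\otimes M\cong\Psi(\OO_x\otimes L)\cong\Psi(\OO_x)
\]
for every closed point $x$. As $L$ ranges over $\Pic^0(X)$, the element $M$ ranges over all of $\Pic^0(X)$. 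Hence $E\coloneqq\Psi(\OO_x)$ is invariant under tensoring by every element of $\Pic^0(X)$.

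Next we deduce that $E$ has finite support. Since $a_X^*:\Pic^0(\Alb(X))\to\Pic^0(X)$ is an isomorphism (\cref{proposition: basic facts about abelian varieties}), the projection formula gives
\[
a_{X*}E\otimes P\cong a_{X*}(E\otimes a_X^*P)\cong a_{X*}E
\]
for all $P\in\Pic^0(\Alb(X))$. By \cref{lemma: Picard invariant objects have finite support}, $a_{X*}E$ has finite support. Because $a_X$ is finite, $a_{X*}$ is exact and faithful on cohomology sheaves, so the support of each $\mathcal H^j(E)$ maps onto a finite set. Finiteness of $a_X$ then makes $\Supp E$ finite.

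It remains to show that $E$ is a shifted skyscraper of length one. Since $\End(E)=\CB$, $E$ is indecomposable, so its support is a single point $y$. An object supported at one point with $\Hom(E,E)=\CB$ and $\Hom(E,E[i])=0$ for $i<0$ should be $\OO_y[t]$. I would argue this as follows. Let $\mathcal H^{a}$ and $\mathcal H^{b}$ be the top and bottom nonzero cohomology sheaves, with $a\le b$. The composition $E\to\mathcal H^{b}[-b]\to\OO_y[-b]\to\mathcal H^{a}[-a]\to E[b-a]$ is a nonzero map, because a map from a sheaf of finite length onto $\OO_y$ and a map from $\OO_y$ into the socle exist. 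If $b\neq a$, this gives a nonzero negative Ext after rotation, a contradiction; so $E$ is a shifted sheaf. A simple sheaf of finite length at $y$ must be $\OO_y$, since otherwise the endomorphism factoring through $\OO_y$ is nilpotent and nonzero. This is the standard argument of \cite[Lemma 4.5]{book_huybrechts_2006_fouriermukai_transforms_in_algebraic_geometry}, which I would cite. Then \cref{proposition: point preserving equivalences are standard} gives $\Psi\cong f_*\circ\tensfunc_L[t]$, so $\Psi$ is standard.

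The main obstacle is the finite-support step. One has to transfer $\Pic^0$-invariance from $X$ to $\Alb(X)$ and then use finiteness of $a_X$, including the surjectivity of $L\mapsto M$, which follows from $\varphi_\Psi$ being an automorphism that preserves the subgroup.
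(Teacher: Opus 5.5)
Your proposal is correct and follows the paper's proof essentially step by step. The steps are: $\Pic^0(X)$-invariance of $\Psi(\OO_x)$ obtained from the Rouquier isomorphism; transfer to $a_{X*}\Psi(\OO_x)$ via $a_X^*$ and the projection formula; finite support from the Fu--Li--Zhao lemma together with finiteness of $a_X$; and then Huybrechts's Lemma 4.5 and the point-preserving criterion.

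There is one small slip in your inline sketch of Huybrechts's lemma. The composite should be $E\to\mathcal H^b[-b]\to\OO_y[-b]\to\mathcal H^a[-b]\to E[a-b]$, which is a nonzero element of $\Hom(E,E[a-b])$ with $a-b<0$; a map to $E[b-a]$ would be a positive-degree Ext and give no contradiction. Since you cite the lemma anyway, this does not affect your argument.
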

\begin{proof}
For a closed point $x\in X$, put $E_x=\Psi(\OO_x)$.
For every $L\in\Pic^0(X)$, the preservation of $\Pic^0(X)$ gives an $M\in\Pic^0(X)$ such that $\varphi_\Psi(\id_X,M)=(\id_X,L)$.
By \cref{definition: Rouquier isomorphism and stability}, this means that $\Psi\circ\tensfunc_M\cong\tensfunc_L\circ\Psi$.
Applying this isomorphism to $\OO_x$ yields
\[
E_x\otimes L\cong\Psi(\OO_x\otimes M)\cong E_x.
\]
By the projection formula, $a_{X*}E_x$ is invariant under tensoring by every element of $\Pic^0(\Alb(X))$. Therefore, it has finite support by \cref{lemma: Picard invariant objects have finite support}. Since $a_X$ is finite, so does $E_x$. Moreover, equivalence gives $\End(E_x)=\CB$ and $\hom^j(E_x,E_x)=0$ for $j<0$.
By \cite[Lemma 4.5]{book_huybrechts_2006_fouriermukai_transforms_in_algebraic_geometry}, $E_x$ is a shift of a skyscraper sheaf. Therefore, \cref{proposition: point preserving equivalences are standard} proves the assertion.
\end{proof}

For an abelian variety $B$ and a variety $Y$ of general type, let $\pi_B$ and $\pi_Y$ denote the projections from $P=B\times Y$.
The projection $\pi_Y$ is a model of the Iitaka fibration of $P$, and $\pi_Y^*\Pic^0(Y)$ corresponds to the Albanese--Iitaka morphism $a_Y\circ\pi_Y$.
If $a_Y$ is finite, then $P\xrightarrow{\pi_Y}Y\xrightarrow{a_Y}\Alb(Y)$ is its Stein factorization, since $\pi_Y$ has connected fibres and $Y$ is normal.
Applying the results of Caucci--Lombardi \cite{caucci_lombardi_2025_irregular_fibrations_of_derived_equivalent_varieties} to this fibration gives the following theorem.
The compatibility with fibre inclusions follows from flat base change for the relative kernel.

\begin{thm}[{\cite[Sections 2.1.3 and 2.1.6, Theorem 3.1 and Claim 3.3]{caucci_lombardi_2025_irregular_fibrations_of_derived_equivalent_varieties}}]
\label{theorem: relative equivalences over general type factor}
Let $P=B\times Y$, where $B$ is abelian and $Y$ is of general type with finite Albanese morphism, and let $\KC\in D^b(P\times P)$ be the Fourier--Mukai kernel of $\Phi\in\Auteq(P)$.
Write $P_y=B\times\{y\}$ and $i_y:P_y\hookrightarrow P$ for each $y\in Y$.
Then the following hold.
\begin{enumerate}
\item The Rouquier isomorphism $\varphi_\Phi$ preserves $\pi_Y^*\Pic^0(Y)$.
\item There is an automorphism $\psi:Y\to Y$ such that
\[
\Supp(\KC)\subset (\pi_Y\times\pi_Y)^{-1}(\Gamma_\psi),
\]
where $\Gamma_\psi=\{(y,\psi(y))\mid y\in Y\}$ and the inclusion is set-theoretic.
\item There are a dense open subset $U\subset Y$ and an object $\KC_U\in D^b(Z_U)$, where
\[
Z_U=(\pi_Y\times\pi_Y)^{-1}(\Gamma_{\psi|_U})\subset P_U\times P_{\psi(U)},
\]
with $P_U=B\times U$, such that $\KC|_{P_U\times P_{\psi(U)}}\cong j_{U*}\KC_U$ for the closed immersion $j_U:Z_U\hookrightarrow P_U\times P_{\psi(U)}$.
For $y\in U$, let $\KC_y$ be the derived pullback of $\KC_U$ along $P_y\times P_{\psi(y)}\hookrightarrow Z_U$.
Then $\Phi_{\KC_y}:D^b(P_y)\to D^b(P_{\psi(y)})$ is an equivalence and
\[
\Phi\circ i_{y*}\cong i_{\psi(y)*}\circ\Phi_{\KC_y}.
\]
\item The automorphism $\psi$ has finite order.
\end{enumerate}
\end{thm}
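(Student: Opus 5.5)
The statement is cited from Caucci--Lombardi, so the plan is to show how their results specialise to $P=B\times Y$, with $\pi_Y$ as the Iitaka fibration whose Albanese--Iitaka Stein factorization is $P\to Y\to\Alb(Y)$.

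\textbf{Step (1).} Caucci--Lombardi show that the Rouquier isomorphism preserves the subgroup of $\Pic^0$ pulled back from the base of the Albanese--Iitaka morphism. Their argument runs through derived invariance of the canonical ring and hence of the Iitaka fibration. Concretely, $\Phi$ commutes with $-\otimes\omega_P$ up to shift, and the elements $\pi_Y^*\alpha$ are exactly those $(\id,L)\in R_P$ compatible with the pluricanonical maps. Since $\kappa(Y)=\dim Y$ and $\omega_P=\pi_Y^*\omega_Y$, this subgroup is $\pi_Y^*\Pic^0(Y)$.

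\textbf{Step (2).} For every $\alpha\in\Pic^0(Y)$, part (1) gives $\Phi\circ\tensfunc_{\pi_Y^*\alpha}\cong\tensfunc_{\pi_Y^*\alpha'}\circ\Phi$ for some $\alpha'$, and $\alpha\mapsto\alpha'$ is an automorphism of $\Pic^0(Y)$. On kernels this reads $\KC\otimes(\pi_Y^*\alpha^{-1}\boxtimes\pi_Y^*\alpha')\cong\KC$. The $\Pic^0$-invariance argument of \cref{lemma: Picard invariant objects have finite support}, pushed forward to $\Alb(Y)\times\Alb(Y)$ along the finite map $a_Y\times a_Y$, shows that $\Supp\KC$ maps into the graph of a homomorphism $\Alb(Y)\to\Alb(Y)$. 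Finiteness of $a_Y$ then upgrades this to a correspondence on $Y$. Running the same argument with $\Phi^{-1}$ shows this correspondence is the graph $\Gamma_\psi$ of an automorphism $\psi$ of $Y$.

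\textbf{Step (3).} Over a dense open $U$ where the scheme-theoretic structure is generic, $\KC$ is set-theoretically supported on $Z_U$. The plan is to show it is scheme-theoretically the pushforward of some $\KC_U$, using the standard relative Fourier--Mukai argument of Caucci--Lombardi (Claim 3.3), which compares with the kernel of $\Phi^{-1}$. The fibrewise kernels $\KC_y$ are then relative integral functors. Flat base change for $Z_U\to U$ gives $\Phi\circ i_{y*}\cong i_{\psi(y)*}\circ\Phi_{\KC_y}$. The same construction for $\Phi^{-1}$ produces a quasi-inverse, so $\Phi_{\KC_y}$ is an equivalence.

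\textbf{Step (4).} Because $Y$ is of general type, $\Aut(Y)$ is finite, so $\psi$ has finite order.

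I expect the main obstacle to be Step (3), specifically the scheme-theoretic (rather than set-theoretic) factorization through $Z_U$. There I would simply invoke Caucci--Lombardi's Claim 3.3 rather than reprove it.
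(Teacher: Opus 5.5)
Your Steps (1), (3) and (4) agree with what the paper does. The paper gives no independent proof: it applies Caucci--Lombardi to the Iitaka fibration $\pi_Y$, using that $P\xrightarrow{\pi_Y}Y\xrightarrow{a_Y}\Alb(Y)$ is the Stein factorization of the Albanese--Iitaka morphism, and gets the fibre compatibility by flat base change. Deducing (4) from the finiteness of $\Aut(Y)$ is fine; the paper uses the same fact when proving the relative normal form.

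The gap is in Step (2), where you replace the citation by your own argument, and the decisive step fails as stated. The invariance $\KC\otimes(\pi_Y^*\alpha^{-1}\boxtimes\pi_Y^*\alpha')\cong\KC$ only shows that $(\pi_Y\times\pi_Y)(\Supp\KC)$ lies in a finite correspondence of the form $\{(y,y')\mid a_Y(y')=u(a_Y(y))+c\}$, with $u$ an automorphism of $\Alb(Y)$. Since $a_Y$ is finite but in general not injective, this correspondence can be multivalued. For example, if $a_Y$ has degree $2$ onto its image, then for $u=\id$, $c=0$ it contains both the diagonal and the graph of the covering involution. Running the argument for $\Phi^{-1}$ adds nothing: the kernel of $\Phi^{-1}$ has transposed support, so you recover the transposed correspondence, which is the same set. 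So the correspondence itself is not a graph, and nothing in your argument explains why $\Supp\KC$ should lie over only one branch of it.

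The missing idea is connectedness.
\begin{itemize}
\item For $x\in P$, $\Supp\Phi(\OO_x)$ is the fibre of $\Supp\KC$ over $x$.
\item $\Phi(\OO_x)$ is simple, hence indecomposable, hence has connected support, because an object whose support is a disjoint union of two closed sets splits. So it lies over a single point $y'$ of the finite fibre of the correspondence.
\item Connectedness of $B$ then shows that $y'$ depends only on $\pi_Y(x)$.
\item The closed set $(\pi_Y\times\pi_Y)(\Supp\KC)$ is now finite and bijective over the normal variety $Y$, so Zariski's main theorem makes $y\mapsto y'$ a morphism $\psi$.
\end{itemize}
Only at this point does the same construction for $\Phi^{-1}$ give $\psi^{-1}$. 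These two ingredients, connected fibres of $\pi_Y$ and normality of $Y$, are exactly what the paper's reference to the Stein factorization encodes. Alternatively, simply cite Caucci--Lombardi's Theorem 3.1 for (2), as the paper does.

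A smaller point: $P\times P\to\Alb(Y)\times\Alb(Y)$ is not finite, so pushing $\KC$ forward may lose support. You should twist by a relatively ample line bundle, which commutes with the $\Pic^0$-action. Then apply the finite-support lemma to the pushforwards of the individual cohomology sheaves.
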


For morphisms $f:Y\to B$ and $g:Y\to\widehat B$, let $\tau_f:P\to P$ be the automorphism defined by $\tau_f(x,y)=(x+f(y),y)$, and put $\PC_g=(\id_B\times g)^*\PC_B$.

\begin{prop}\label{proposition: relative normal form}
Let $P=B\times Y$, where $B$ is abelian and $Y$ is of general type with finite Albanese morphism.
For every $\widetilde\Phi\in\Auteq(P)$, there are an integer $r>0$, an autoequivalence $\Psi\in\Auteq(B)$, a line bundle $M\in\Pic(Y)$, an integer $s$, and morphisms $f:Y\to B$ and $g:Y\to\widehat B$ such that
\begin{equation}\label{equation: relative normal form}
\widetilde\Phi^r\cong\tensfunc_{\pi_Y^*M}[s]\circ\Phi_0,
\end{equation}
where $\Phi_0=(\tau_f)_*\circ\tensfunc_{\PC_g}\circ(\Psi\boxtimes\id_{D^b(Y)})$.
\end{prop}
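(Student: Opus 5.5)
Replace $\widetilde\Phi$ by $\widetilde\Phi^{r_1}$, where $r_1$ is the order of the automorphism $\psi$ from \cref{theorem: relative equivalences over general type factor}(4). We may then assume $\psi=\id_Y$, so the kernel $\KC$ is set-theoretically supported on $B\times B\times\Delta_Y$, and $\widetilde\Phi$ is a relative equivalence over $Y$ on the dense open set $U$. The strategy is to extract a single $\Psi\in\Auteq(B)$ from the fibrewise equivalences $\Phi_{\KC_y}$ with $y\in U$, and then show that $\Phi_1\coloneqq\widetilde\Phi\circ(\Psi^{-1}\boxtimes\id_{D^b(Y)})$ preserves $\Pic^0(P)$ under its Rouquier isomorphism after a further power. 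Once that holds, \cref{lemma: Picard preserving equivalences are standard} applies: $P$ has finite Albanese morphism, being a product of $B$ and $Y$. Hence $\Phi_1$ is standard, and we analyse it as a standard autoequivalence.

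\emph{Choosing $\Psi$.} The fibres $P_y=B$ all have the same derived category, and the kernels $\KC_y$ form a flat family over $U$. Equivalences $D^b(B)\to D^b(B)$ modulo standard ones $B\rtimes\widehat B$-type data form a discrete set, by Orlov's description via the isometry group of $B\times\widehat B$. So the class of $\Phi_{\KC_y}$ in $\Auteq(B)/(\Pic^0(B)\times B\times\ZZ[1])$ is locally constant on the connected $U$. Fix $y_0$ and put $\Psi=\Phi_{\KC_{y_0}}$. Then for every $y\in U$,
\[
\Phi_{\KC_y}\cong t_{a(y)*}\circ\tensfunc_{P_{\alpha(y)}}\circ\Psi
\]
up to a common shift, with $a(y)\in B$ and $\alpha(y)\in\widehat B$. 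Consequently the fibrewise restriction of $\Phi_1$ sends skyscrapers $\OO_{(b,y)}$ with $y\in U$ to shifted skyscrapers. To extend from $U$ to all $y$, I would rather use the Rouquier criterion. By \cref{theorem: relative equivalences over general type factor}(1), $\varphi_{\widetilde\Phi}$ preserves $\pi_Y^*\Pic^0(Y)$. On the complementary factor $\pi_B^*\widehat B$, the action is determined fibrewise: compatibility $\Phi\circ i_{y*}\cong i_{y*}\circ\Phi_{\KC_y}$ shows that $\varphi_{\widetilde\Phi}$ restricted to $\{\id\}\times\pi_B^*\widehat B$ agrees with the action of $\varphi_{\Phi_{\KC_y}}$ on $B\times\widehat B$. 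After composing with $\Psi^{-1}\boxtimes\id$, which acts on $R_P=R_B\times R_Y$ only through the $B\times\widehat B$ factor, this fibrewise part becomes the identity. Hence $\varphi_{\Phi_1}$ preserves $\Pic^0(P)=\pi_B^*\widehat B\times\pi_Y^*\Pic^0(Y)$. This step needs care because $\Aut^0(P)=B$ (as $Y$ is of general type with finite Albanese, $\Aut^0(Y)$ is trivial), so $R_P=B\times\widehat B\times\Pic^0(Y)$. If preservation only holds after permuting finite data, I would pass to a further power.

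\emph{Analysing the standard $\Phi_1$.} By \cref{lemma: Picard preserving equivalences are standard}, $\Phi_1\cong h_*\circ\tensfunc_N[s]$ with $h\in\Aut(P)$ and $N\in\Pic(P)$. The morphism $h$ commutes with $\pi_Y$ up to $\psi=\id$, so $h(x,y)=(u_y(x),y)$. Each $u_y$ is an automorphism of $B$ depending algebraically on $y$. By \cref{proposition: basic facts about abelian varieties}(1), $u_y=t_{f(y)}\circ u$ with $u\in\Aut_{\mathrm{gp}}(B)$ constant, since group automorphisms are discrete. Absorb $u_*$ into $\Psi$. For $N$, restrict to fibres: the classes $N|_{B\times\{y\}}$ are constant in $\mathrm{NS}(B)$, so $N\otimes\pi_B^*N_0^{-1}$ is fibrewise in $\Pic^0(B)$ for $N_0=N|_{B\times\{y_0\}}$. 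Then \cref{proposition: basic facts about abelian varieties}(4) gives $N\cong\pi_B^*N_0\otimes\PC_g\otimes\pi_Y^*M$. Absorb $\tensfunc_{N_0}$ into $\Psi$ and commute factors, using that $\tensfunc_{\pi_Y^*M}$ commutes with $\tau_f$ and with $\Psi\boxtimes\id$. This yields \eqref{equation: relative normal form} for $\widetilde\Phi^r$.

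\emph{Main obstacle.} The hardest step is the second one: showing that the fibrewise equivalences are constant modulo standard data, and that this forces Rouquier preservation of the $\pi_B^*\widehat B$ factor globally, including over $Y\setminus U$. I would first try to use that $\varphi_{\widetilde\Phi}$ is an algebraic group automorphism of the abelian variety $R_P$. It preserves $\pi_Y^*\Pic^0(Y)$ and, by the relative structure, the subvariety $B\times\widehat B$, which is the part acting fibrewise. Its restriction to the latter is a fixed element of $\Aut(B\times\widehat B)$. It therefore comes from a single $\Psi$ by Orlov's surjectivity onto the isometric automorphisms, possibly after replacing by a power so that the isometry lies in the image.
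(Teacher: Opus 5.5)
Your argument is correct in outline and follows the paper's route. You kill $\psi$ by a power, take $\Psi$ from the fibre over some $y_0\in U$, and show that $\Phi_1=\widetilde\Phi^r\circ(\Psi^{-1}\boxtimes\id_{D^b(Y)})$ has Rouquier isomorphism preserving $\Pic^0(P)$. You then apply \cref{lemma: Picard preserving equivalences are standard} and decompose $h_*\circ\tensfunc_N[s]$ using rigidity and \cref{proposition: basic facts about abelian varieties}(4). Absorbing a group automorphism $u$ and $N|_{B\times\{y_0\}}$ into $\Psi$ is fine; the paper instead uses $\Phi_1\circ i_{y_0*}\cong i_{y_0*}$ to get $u=\id_B$, $N|_{B\times\{y_0\}}\cong\OO_B$ and $s=0$.

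The step you flag as the main obstacle needs only the fibre over $y_0$. Since $\Phi_1\circ i_{y_0*}\cong i_{y_0*}$, the functor $\Phi_1$ fixes $\OO_{(0,y_0)}$. Evaluating $\Phi_1\circ\tensfunc_L\cong(t_c\times\id_Y)_*\circ\tensfunc_{L'}\circ\Phi_1$ on this skyscraper gives $c=0$ for every $L\in\Pic^0(P)$ at once. So you need neither local constancy of $\Phi_{\KC_y}$ over $U$, nor anything over $Y\setminus U$, nor part (1) of \cref{theorem: relative equivalences over general type factor}. You should also drop the Orlov-type detour in your last paragraph: $\varphi_{\widetilde\Phi}$ need not preserve the subgroup $B\times\widehat B$ of $R_P$. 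For example, $\tensfunc_{\PC_g}$ with $g$ nonconstant sends $t_b\times\id_Y$ to $t_b\times\id_Y$ twisted by a nontrivial element of $\pi_Y^*\Pic^0(Y)$.
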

\begin{proof}
Let $r$ be the order of the automorphism of $Y$ induced by $\widetilde\Phi$ in \cref{theorem: relative equivalences over general type factor}.
Applying the same theorem to $\widetilde\Phi^r$, choose a fibre inclusion $i:B\hookrightarrow P$ over $y_0\in Y$ and $\Psi\in\Auteq(B)$ such that $\widetilde\Phi^r\circ i_*\cong i_*\circ\Psi$.
For $\Phi'=\widetilde\Phi^r\circ(\Psi^{-1}\boxtimes\id_{D^b(Y)})$, we have
\begin{equation}\label{equation: normalized transform on a fibre}
\Phi'\circ i_*\cong i_*.
\end{equation}

We first show that $\Phi'$ is standard.
Since $Y$ is of general type, $\Aut(Y)$ is finite and $H^0(Y,T_Y)=0$.
The K\"unneth decomposition gives $h^0(P,T_P)=h^0(B,T_B)=\dim B$.
Thus, $\Aut^0(P)$ is the subgroup of translations on the first factor by \cite[Theorem 7.1.1]{brion_2017_some_structure_theorems_for_algebraic_groups}.
For $L\in\Pic^0(P)$, write $\varphi_{\Phi'}(\id_P,L)=(t_c\times\id_Y,L')$ with $c\in B$ and $L'\in\Pic^0(P)$.
By \cref{definition: Rouquier isomorphism and stability},
\[
\Phi'\circ\tensfunc_L\cong(t_c\times\id_Y)_*\circ\tensfunc_{L'}\circ\Phi'.
\]
Set $z=(0,y_0)$.
Since \eqref{equation: normalized transform on a fibre} gives $\Phi'(\OO_z)\cong\OO_z$, applying this isomorphism of functors to $\OO_z$ yields
\begin{align*}
\OO_z&\cong\Phi'(\OO_z\otimes L)\\
&\cong(t_c\times\id_Y)_*(\Phi'(\OO_z)\otimes L')
\cong\OO_{(c,y_0)}.
\end{align*}
Isomorphic coherent sheaves have the same support, so $(0,y_0)=(c,y_0)$ and $c=0$.
Hence $\varphi_{\Phi'}(\{\id_P\}\times\Pic^0(P))\subset\{\id_P\}\times\Pic^0(P)$.
The image is a closed connected subgroup of the same dimension as $\Pic^0(P)$, so equality holds.
Since $P$ has finite Albanese morphism, \cref{lemma: Picard preserving equivalences are standard} gives $\Phi'\cong h_*\circ\tensfunc_N[s]$ for some $h\in\Aut(P)$, $N\in\Pic(P)$, and $s\in\ZZ$.

The support assertion in \cref{theorem: relative equivalences over general type factor} implies that $\widetilde\Phi^r$ preserves objects supported on each fibre of $\pi_Y$.
The functor $\Psi^{-1}\boxtimes\id_{D^b(Y)}$ has the same property, and therefore so does $\Phi'$.
Applying this to skyscraper sheaves gives $\pi_Y\circ h=\pi_Y$.
Moreover, \eqref{equation: normalized transform on a fibre} implies that $h$ fixes $B\times\{y_0\}$ pointwise.
Define $f:Y\to B$ by $f(y)=\pi_B(h(0,y))$.
For each $y\in Y$, the automorphism $u_y(x)=\pi_B(h(x,y))-f(y)$ of $B$ fixes the origin, and hence is a group automorphism by \cref{proposition: basic facts about abelian varieties}.
Rigidity of homomorphisms of abelian varieties shows that $u_y$ is independent of $y$.
Since $u_{y_0}=\id_B$, we obtain $h=\tau_f$.

Applying \eqref{equation: normalized transform on a fibre} to $\OO_B$ gives $i_*(N|_{B\times\{y_0\}})[s]\cong i_*\OO_B$.
Taking cohomology sheaves gives $s=0$, and the full faithfulness of pushforward on coherent sheaves gives $N|_{B\times\{y_0\}}\cong\OO_B$.
Since $Y$ is connected, all fibre restrictions of $N$ belong to $\Pic^0(B)$.
By \cref{proposition: basic facts about abelian varieties}, there are a morphism $g:Y\to\widehat B$ and a line bundle $M\in\Pic(Y)$ such that $N\cong\PC_g\otimes\pi_Y^*M$.
Substituting $h=\tau_f$ and this expression for $N$, and commuting $\tensfunc_{\pi_Y^*M}$ with $(\tau_f)_*$, proves \eqref{equation: relative normal form}.
\end{proof}

\subsection{Proof of the GY property for finite Albanese morphisms}\label{subsection: proof of Gromov Yomdin property for finite Albanese morphisms}

\begin{setup}\label[setup]{setup: abelian extension of a relative autoequivalence}
Let $P=B\times Y$ be as in \cref{proposition: relative normal form}, and let $\widetilde\Phi\in\Auteq(P)$.
Fix $r,\Psi,M,s,f,g$, and $\Phi_0$ satisfying \eqref{equation: relative normal form}.
Put $C=\Alb(Y)$ and $A'=B\times C$, and let $q=\id_B\times a_Y:P\to A'$.
The morphism $q$ is finite.
By \cref{proposition: basic facts about abelian varieties}, there are morphisms $\bar f:C\to B$ and $\bar g:C\to\widehat B$ satisfying $f=\bar f\circ a_Y$ and $g=\bar g\circ a_Y$.
Moreover, $\bar f-\bar f(0)$ and $\bar g-\bar g(0)$ are homomorphisms.
Using the same notation for fibrewise translations and Poincar\'e bundles on $A'$, define
\[
\widehat\Phi_0=(\tau_{\bar f})_*\circ\tensfunc_{\PC_{\bar g}}\circ(\Psi\boxtimes\id_{D^b(C)}).
\]
\end{setup}

\begin{prop}\label{proposition: entropy upper bound from abelian factor}
In \cref{setup: abelian extension of a relative autoequivalence}, one has
\[
r\hcat(\widetilde\Phi)=\hcat(\Phi_0)\leq\hcat(\widehat\Phi_0).
\]
\end{prop}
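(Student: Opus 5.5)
The proof has two parts: the equality $r\hcat(\widetilde\Phi)=\hcat(\Phi_0)$, and the inequality $\hcat(\Phi_0)\leq\hcat(\widehat\Phi_0)$, which comes from comparing $\Phi_0$ on $P$ with $\widehat\Phi_0$ on $A'$ through the finite map $q$.

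\emph{The equality.} By \cref{proposition: basic properties of categorical entropy}(1) we have $r\hcat(\widetilde\Phi)=\hcat(\widetilde\Phi^r)$, and \eqref{equation: relative normal form} gives $\widetilde\Phi^r\cong\tensfunc_{\pi_Y^*M}[s]\circ\Phi_0$. Applying (2) of the same proposition at $t=0$ removes the shift. To remove $\tensfunc_{\pi_Y^*M}$, I will use part (4), so I first check that $\tensfunc_{\pi_Y^*M}$ commutes with $\Phi_0$. This holds factor by factor:
\begin{itemize}
\item $\tau_f$ commutes with $\pi_Y$, so $\tau_f^*\pi_Y^*M\cong\pi_Y^*M$;
\item tensoring by line bundles commutes with $\tensfunc_{\PC_g}$;
\item $(\Psi\boxtimes\id)(E\otimes\pi_Y^*M)\cong(\Psi\boxtimes\id)(E)\otimes\pi_Y^*M$, since the kernel of $\Psi\boxtimes\id$ is $K_\Psi\boxtimes\OO_{\Delta_Y}$ and the projection formula applies.
\end{itemize}
Part (4) together with \cref{lemma: categorical entropy of tensor functors} then gives $\hcat(\widetilde\Phi^r)\leq\hcat(\Phi_0)$. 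Writing $\Phi_0=\tensfunc_{\pi_Y^*M^{-1}}[-s]\circ\widetilde\Phi^r$ and applying the same argument gives the reverse inequality.

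\emph{The inequality: intertwining.} I will show that $q_*\circ\Phi_0\cong\widehat\Phi_0\circ q_*$, where $q=\id_B\times a_Y$. Again I argue factor by factor:
\begin{itemize}
\item $q\circ\tau_f=\tau_{\bar f}\circ q$, because $f=\bar f\circ a_Y$; hence $q_*(\tau_f)_*=(\tau_{\bar f})_*q_*$.
\item $\PC_g=q^*\PC_{\bar g}$, so the projection formula gives $q_*(E\otimes\PC_g)\cong q_*E\otimes\PC_{\bar g}$.
\item $q_*\circ(\Psi\boxtimes\id_Y)\cong(\Psi\boxtimes\id_C)\circ q_*$, by flat base change, since the kernel is $K_\Psi\boxtimes\OO_\Delta$ and $q$ acts only on the second factor.
\end{itemize}
Iterating gives $q_*\Phi_0^n\cong\widehat\Phi_0^nq_*$ for all $n$.

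\emph{The inequality: counting Homs.} Take an ample $H$ on $A'$. Since $q$ is finite, $q^*H$ is ample on $P$. By \cref{theorem: existence of generator}, $G=\bigoplus_{j}q^*H^{\otimes j}$ is a classical generator of $D^b(P)$, and $G=q^*G'$ for a classical generator $G'$ of $D^b(A')$. Adjunction gives
\[
\sum_i\hom^i(q^*G',\Phi_0^nq^*G')=\sum_i\hom^i(G',q_*\Phi_0^nq^*G')=\sum_i\hom^i(G',\widehat\Phi_0^n(q_*q^*G')).
\]
The object $q_*q^*G'$ is fixed, independent of $n$. So \cref{lemma: fixed objects and total Hom growth}, applied to $\widehat\Phi_0$ with $A=G'$ and $B=q_*q^*G'$, bounds the right-hand side by $C\sum_i\hom^i(G',\widehat\Phi_0^nG')$ with $C$ independent of $n$. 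Taking $\frac1n\log$ and using \cref{theorem: categorical entropy and total Hom comparison} gives $\hcat(\Phi_0)\leq\hcat(\widehat\Phi_0)$.

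The step most likely to need care is the intertwining $q_*\circ\Phi_0\cong\widehat\Phi_0\circ q_*$, in particular the $\Psi\boxtimes\id$ factor. It needs a correct base-change identification of the kernels along $\id_B\times a_Y$, and $a_Y$ is finite but not flat, so the argument must use flat base change for the projections rather than for $a_Y$ itself. The rest is formal.
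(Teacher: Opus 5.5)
Your proposal is correct. The equality $r\hcat(\widetilde\Phi)=\hcat(\Phi_0)$ and the intertwining $q_*\circ\Phi_0\cong\widehat\Phi_0\circ q_*$ are argued as in the paper. Your final comparison step, however, takes a somewhat different route.

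The paper turns the pushforward relation into the pullback relation $\Phi_0\circ q^*\cong q^*\circ\widehat\Phi_0$. It does this by writing $q_*\circ\Phi_0^{-1}\cong\widehat\Phi_0^{-1}\circ q_*$ and passing to left adjoints. It then uses that complexity does not increase under the exact functor $q^*$, which gives $\delta_0(q^*G_{A'},\Phi_0^nq^*G_{A'})\leq\delta_0(G_{A'},\widehat\Phi_0^{\,n}G_{A'})$ directly, with no constant.

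You stay with $q_*$ instead. You use the adjunction $q^*\dashv q_*$ to move the total Hom to $A'$, and absorb the fixed object $q_*q^*G'$ with \cref{lemma: fixed objects and total Hom growth}. This is the same mechanism as in \cref{lemma: categorical entropy of lifts}. It avoids the adjoint manipulation, at the cost of an $n$-independent constant that disappears in the growth rate. Both routes are fine.

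One detail in your choice of generator needs fixing. \cref{theorem: existence of generator} requires a very ample line bundle, and ampleness of $q^*H$ alone is not enough. Replace $H$ by a power such that both $H$ and $q^*H$ are very ample, and let $j$ run from $0$ to $\dim A'$. Then $G'$ is a classical generator of $D^b(A')$. Moreover, $q^*G'$ contains Orlov's generator of $D^b(P)$ as a direct summand, because $\dim P\leq\dim A'$ (as $a_Y$ is finite), so it is a classical generator of $D^b(P)$.
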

\begin{proof}
The functor $\Phi_0$ is $Y$-linear and commutes with $\tensfunc_{\pi_Y^*M}$.
This tensor functor and its inverse have zero categorical entropy by \cref{lemma: categorical entropy of tensor functors}.
Applying subadditivity in both directions and using \cref{proposition: basic properties of categorical entropy}, we obtain
\[
r\hcat(\widetilde\Phi)=\hcat(\widetilde\Phi^r)=\hcat(\Phi_0).
\]

We next verify the isomorphisms
\begin{equation}\label{equation: intertwining with abelian extension}
q_*\circ\Phi_0\cong\widehat\Phi_0\circ q_*, \text{ and }
\Phi_0\circ q^*\cong q^*\circ\widehat\Phi_0.
\end{equation}
The identities $q\circ\tau_f=\tau_{\bar f}\circ q$ and $\PC_g=q^*\PC_{\bar g}$ give the compatibility of $q_*$ with the fibrewise translations and tensor functors.
For $\Psi\boxtimes\id$, this compatibility follows from flat base change for the product projections.
This proves the first isomorphism.
Rewriting it as $q_*\circ\Phi_0^{-1}\cong\widehat\Phi_0^{-1}\circ q_*$ and taking left adjoints gives the second isomorphism in \eqref{equation: intertwining with abelian extension}.

Choose an ample line bundle on $A'$ and replace it by a power $L$ such that both $L$ and $q^*L$ are very ample.
Set $G_{A'}=\bigoplus_{j=0}^{\dim A'}L^{\otimes j}$ and $G_P=q^*G_{A'}$.
By \cref{theorem: existence of generator}, $G_{A'}$ is a classical generator, and $G_P$ contains a classical generator as a direct summand because $\dim P\leq\dim A'$.
By \cite[Proposition 2.2(c)]{dimitrov_haiden_katzarkov_kontsevich_2014_dynamical_systems_and_categories}, complexity does not increase under exact functors, so
\begin{align*}
\delta_0(G_P,\Phi_0^nG_P)
=\delta_0(q^*G_{A'},q^*\widehat\Phi_0^{\,n}G_{A'})
\leq\delta_0(G_{A'},\widehat\Phi_0^{\,n}G_{A'}).
\end{align*}
Taking exponential growth rates gives $\hcat(\Phi_0)\leq\hcat(\widehat\Phi_0)$.
\end{proof}

\begin{lem}\label{lemma: numerical spectral radius of the abelian extension}
In \cref{setup: abelian extension of a relative autoequivalence}, the endomorphisms $\numg{\widehat\Phi_0}$ and $\numg{(\Psi\boxtimes\id_{D^b(C)})}$ of $N(A')_{\RB}$ have the same characteristic polynomial.
In particular, $\rho(\numg{\widehat\Phi_0})=\rho(\numg{\Psi})$.
\end{lem}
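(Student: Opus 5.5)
We will show that the numerical action of $\widehat\Phi_0$ equals that of $\Psi\boxtimes\id_{D^b(C)}$ composed with a unipotent operator. Then the two operators have the same characteristic polynomial, and the statement follows.

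Write $\widehat\Phi_0=U\circ(\Psi\boxtimes\id_{D^b(C)})$ with $U=(\tau_{\bar f})_*\circ\tensfunc_{\PC_{\bar g}}$. The key steps are as follows.

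First, we make the operators explicit on cohomology. Since $\td(A')=1$ and $A'$ is abelian, the Mukai vector is just $\ch$. We may therefore work on $H^{\mathrm{ev}}(A',\CB)$, where the algebraic part surjects onto $N(A')_{\CB}$. The numerical action of each factor is induced by its cohomological action on this invariant subspace. It therefore suffices to show that $U^H$ is unipotent on $H^*(A',\CB)$ and that it preserves the subspace of algebraic classes. Unipotence survives restriction to an invariant subspace and passage to the quotient, so $\numg{U}$ is then unipotent.

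Second, we check unipotence for each factor of $U$.

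For $\tensfunc_{\PC_{\bar g}}$, the action is cup product with $\ch(\PC_{\bar g})=\exp(c_1(\PC_{\bar g}))$. Since $c_1$ has positive degree, cup product with it is nilpotent, so this operator is unipotent.

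For $(\tau_{\bar f})_*$, write $\tau_{\bar f}=t_{(\bar f(0),0)}\circ\tau_{u}$ with $u=\bar f-\bar f(0)$ a homomorphism. The translation acts trivially on cohomology. The automorphism $\tau_u(x,c)=(x+u(c),c)$ is a group automorphism of $A'$. On $H^1(A')=H^1(B)\oplus H^1(C)$ its pullback is $\begin{pmatrix}1&0\\ u^*&1\end{pmatrix}$, which is unipotent. Hence its action on $\bigwedge^\bullet H^1$ is unipotent as well.

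A product of unipotent operators need not be unipotent in general, which is the one delicate point. We handle it with a filtration. Filter $H^*(A')$ by the Künneth degree in the $C$-direction, $F^k=\bigoplus_{j\ge k}H^\ast(B)\otimes H^j(C)$, and also use the total degree. Then:
\begin{itemize}
\item $\tau_u^*$ preserves total degree and acts as the identity on the graded pieces of $F^\bullet$;
\item cup product with $\exp(c_1)$ raises total degree and is the identity on the associated graded of the total-degree filtration.
\end{itemize}
Combining these, use the lexicographic filtration by (total degree, $C$-degree); both operators preserve it. Each operator is then the identity on the associated graded pieces, so their composite $U^H$ is unipotent. We expect this filtration bookkeeping to be the main obstacle. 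An alternative is to note that $c_1(\PC_{\bar g})$ lies in $H^1(B)\otimes H^1(C)$ plus pullbacks, so both operators are unipotent elements of a common solvable (triangular) group with respect to a suitable basis.

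Third, we conclude. Choose a basis adapted to the filtration. The operator $\Psi\boxtimes\id$ acts on $H^*(B)\otimes H^*(C)$ as $\Psi^H\otimes\id$, preserving $C$-degree but not total degree, since $\Psi^H$ mixes degrees on $B$. So we refine the filtration to use only the $C$-degree together with the degree of the $\ch(\PC)$ twist, and we verify that $\numg{\widehat\Phi_0}$ is block upper triangular with diagonal blocks equal to those of $\numg{(\Psi\boxtimes\id)}$. This gives the equality of characteristic polynomials.

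Finally, $N(A')_{\RB}\supset N(B)_{\RB}\otimes N(C)_{\RB}$, and $\Psi\boxtimes\id$ acts as $\numg{\Psi}\otimes\id$ there. Hence, by the Künneth description of algebraic classes, $\rho(\numg{(\Psi\boxtimes\id)})=\rho(\numg{\Psi})$. Alternatively, this can be obtained from the upper bound via restriction to fibres. Combining with the equality of characteristic polynomials gives $\rho(\numg{\widehat\Phi_0})=\rho(\numg{\Psi})$.
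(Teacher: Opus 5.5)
Your strategy of comparing $\numg{\widehat\Phi_0}$ with $T\coloneqq\numg{(\Psi\boxtimes\id_{D^b(C)})}$ through a filtration on $H^*(A',\CB)$ can be made to work, but two steps fail as written. Write $T^H\coloneqq(\Psi\boxtimes\id_{D^b(C)})^H$.

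\textbf{The reduction to unipotence.} The reduction in your opening paragraph and first step is invalid. Unipotence of $\numg{U}$ does not force $\numg{U}T$ and $T$ to have the same characteristic polynomial: already $\bigl(\begin{smallmatrix}1&1\\0&1\end{smallmatrix}\bigr)\bigl(\begin{smallmatrix}1&0\\1&1\end{smallmatrix}\bigr)=\bigl(\begin{smallmatrix}2&1\\1&1\end{smallmatrix}\bigr)$ is hyperbolic. What you need is a single filtration, preserved by both $\numg{U}$ and $T$, on whose graded pieces $\numg{U}$ is the identity. Your lexicographic (total degree, $C$-degree) filtration is not preserved by $T^H$, as you note yourself. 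The refinement in your third step is only announced, so the block-triangularity is never proved.

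It does hold for the $C$-degree filtration $F^k=\bigoplus_{j\geq k}H^*(B)\otimes H^j(C)$ alone, for three reasons. First, $T^H$ preserves each $H^*(B)\otimes H^j(C)$. Second, the ring automorphism $(\tau_{\bar f})_*=(\tau_{\bar f}^{-1})^*$ sends $\alpha\in H^1(B)$ to $\alpha-u^*\alpha$ and fixes $H^1(C)$; hence it preserves $F^\bullet$ and is the identity on $F^k/F^{k+1}$. Third, since $\PC_{\bar g}|_{B\times\{c\}}=P_{\bar g(c)}\in\Pic^0(B)$, the class $c_1(\PC_{\bar g})$ has no component in $H^2(B)\otimes H^0(C)$. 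So cup product with $\ch(\PC_{\bar g})$ also preserves $F^\bullet$ and is the identity on $F^k/F^{k+1}$.

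This vanishing is essential, not a side remark (your ``plus pullbacks''). A component in $H^2(B)\otimes H^0(C)$, as for $\tensfunc_{\pi_B^*L}$ with $L$ ample, would make the conclusion fail in general. To finish, intersect $F^\bullet$ with the invariant subspace of algebraic Mukai classes and push it to the quotient $N(A')_{\CB}$. The graded pieces there are equivariant subquotients of the $F^k/F^{k+1}$, so the characteristic polynomials agree on $N(A')_{\RB}$.

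\textbf{The spectral radius step.} This step is unjustified. In general $N(A')_{\RB}$ is strictly larger than $N(B)_{\RB}\otimes N(C)_{\RB}$: homomorphisms $C\to\widehat B$ give algebraic classes in $H^1(B)\otimes H^1(C)$, for instance $c_1(\PC_{\bar g})$ itself. So your inclusion, like restriction to fibres, only gives $\rho(T)\geq\rho(\numg{\Psi})$. The reverse inequality needs real input. One option is $\rho(T)\leq\rho(T^H)=\rho_H(\Psi)=\rho(\numg{\Psi})$ by \cref{proposition: numerical and cohomological entropy on abelian varieties}. Another, which is what the paper does, is $\log\rho(T)=\hcat(\Psi\boxtimes\id_{D^b(C)})=\hcat(\Psi)=\log\rho(\numg{\Psi})$ by \cref{theorem: Gromov Yomdin equality for abelian varieties,lemma: entropy of external product with identity}.

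\textbf{Comparison with the paper.} Once repaired, your argument is a genuine alternative to the paper's proof, which never leaves $N(A')_{\RB}$. There, after reducing to homomorphisms $\bar f,\bar g$, one conjugates by the numerical pullback along $\id_B\times[n]_C$. This replaces the numerical actions of $(\tau_{\bar f})_*$ and $\tensfunc_{\PC_{\bar g}}$ by their $n$-th powers while fixing $T$. The resulting characteristic polynomial is polynomial in $n$, so it can be evaluated at $n=0$. That trick avoids all K\"unneth bookkeeping. Your route instead gives the equality already on $H^*(A',\CB)$, and it shows explicitly why the K\"unneth type of $c_1(\PC_{\bar g})$ matters.
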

\begin{proof}
Recall that translations and tensoring by algebraically trivial line bundles act trivially on numerical classes.
By the additivity of the Poincar\'e bundle, replacing $\bar f,\bar g$ by $\bar f-\bar f(0),\bar g-\bar g(0)$ does not change $\numg{\widehat\Phi_0}$.
Thus, we may assume that $\bar f$ and $\bar g$ are homomorphisms.
Put $U=\numg{(\tau_{\bar f})}_*$, $V=\numg{(\tensfunc_{\PC_{\bar g}})}$, and $T=\numg{(\Psi\boxtimes\id_{D^b(C)})}$.

For every integer $n>0$, let $D_n$ be the numerical pullback of $\mu_n=\id_B\times[n]_C$.
Since the deck transformations of $\mu_n$ are translations, finite flat base change gives $\mu_n^*\mu_{n*}=(\deg \mu_n)\cdot\id$ on numerical classes, so $D_n$ is invertible.
The identities $\mu_n\circ \tau_{n\bar f}=\tau_{\bar f}\circ \mu_n$, $\tau_{n\bar f}=\tau_{\bar f}^n$, and $\mu_n^*\PC_{\bar g}\cong\PC_{\bar g}^{\otimes n}$, together with flat base change for the product projections, give $D_nUD_n^{-1}=U^n$, $D_nVD_n^{-1}=V^n$, and $D_nTD_n^{-1}=T$.
Hence
\[
\det(\lambda\cdot\id-UVT)=\det(\lambda\cdot\id-U^nV^nT).
\]

Since $U$ and $V$ are each conjugate to all their positive powers,
they are unipotent.
Put $d=\dim N(A')_{\RB}$ and write $U=\id+N_U$,
where $N_U^d=0$.
Therefore,  the finite expansion $U^n=\sum_{j=0}^{d-1}\binom{n}{j}N_U^j$ and the same discussion for $V$ show that
\[
    F(n,\lambda)\coloneqq\det(\lambda\cdot\id-U^nV^nT)
\]
is polynomial in $n$ and $\lambda$.
The preceding identity gives $F(n,\lambda)=F(1,\lambda)$ for every
positive integer $n$, so $F$ is independent of $n$.
Evaluating at $n=0$, we obtain
\[
    \det(\lambda\cdot\id-UVT)=\det(\lambda\cdot\id-T).
\]
Since $\numg{\widehat\Phi_0}=UVT$, this proves the first assertion.

Finally, \cref{theorem: Gromov Yomdin equality for abelian varieties,lemma: entropy of external product with identity} give
\begin{equation*}
\log\rho(\numg{\widehat\Phi_0})
=\log\rho(T)=\hcat(\Psi\boxtimes\id_{D^b(C)})
=\hcat(\Psi)=\log\rho(\numg{\Psi}).\qedhere
\end{equation*}
\end{proof}

We next identify a numerical subspace on which the action of the abelian factor descends to $X$.
Let $p:P=B\times Y\to X$ be the product cover of \cref{theorem: Kawamata product cover}, and let $i_y:B\hookrightarrow P$ be the inclusion of the fibre over $y\in Y$.

\begin{lem}\label{lemma: descent of numerical fibre classes}
The induced map $i_{y*}:N(B)_{\RB}\to N(P)_{\RB}$ is injective, and its image $W=i_{y*}N(B)_{\RB}$ is independent of $y$.
Moreover, $\Gal(p)$ acts trivially on $W$ and $p_*|_W$ is injective.
\end{lem}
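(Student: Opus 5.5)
Injectivity of $i_{y*}$ will come from the projection $\pi_B$: the composite $\pi_{B*}\circ i_{y*}$ is the identity on $D^b(B)$ because $\pi_B\circ i_y=\id_B$. Hence $\pi_{B*}\circ i_{y*}=\id$ already on $K_0$. To pass to numerical classes, I would instead use the adjoint description. For $v\in N(B)$ and $w\in K_0(P)$,
\[
\chi_P(w,i_{y*}v)=\chi_B(i_y^*w,v),
\]
and in particular $\chi_P(\pi_B^*u,i_{y*}v)=\chi_B(u,v)$ for $u\in K_0(B)$. So if $i_{y*}v\in\rad\chi_P$, then $\chi_B(u,v)=0$ for all $u$. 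The symmetric statement follows from $\chi_P(i_{y*}v,\pi_B^*u)=\chi_B(v,i_y^!\pi_B^*u)$, where $i_y^!=i_y^*[-\dim Y]\otimes\det N$ and the normal bundle is trivial. Together these give $v\in\rad\chi_B$, which shows that $i_{y*}$ is well defined on numerical groups and injective.

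For independence of $y$, I would argue by deformation. The classes $i_{y*}v$ are the classes of $\OO_{B\times\{y\}}\boxtimes$-type objects $E\boxtimes\OO_y$, that is, $[E]\boxtimes[\OO_y]$. Since $Y$ is connected, the skyscraper classes $[\OO_y]$ agree in $N(Y)$: pairing with any $F$ gives $\chi(F,\OO_y)=\operatorname{rk}F$, which is constant. The numerical class of $E\boxtimes\OO_y$ in $N(P)$ therefore does not depend on $y$. To make this precise, I would check it by pairing with an arbitrary $w\in K_0(P)$: $\chi_P(w,E\boxtimes\OO_y)=\chi_B(i_y^*w,E)$, and $i_y^*w$ varies in a flat family over the connected base $Y$, so its numerical class, and hence this pairing, is locally constant. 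The other-side pairing is handled in the same way. This gives $i_{y*}=i_{y'*}$ as maps $N(B)_\RB\to N(P)_\RB$, so $W$ is independent of $y$.

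Since $\Gal(p)$ acts diagonally, with $\gamma=(t_{b_\gamma},\gamma_Y)$, we get $\gamma_*\circ i_{y*}=i_{\gamma_Y(y)*}\circ t_{b_\gamma *}$. Translations act trivially on $N(B)$, and by independence of $y$ this yields $\gamma_*|_W=\id_W$.

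For injectivity of $p_*|_W$, I would use $p^*p_*=\sum_{\gamma}\gamma_*$, which is valid because $p$ is Galois étale, and which holds numerically since both functors preserve $\rad\chi$ through adjunctions with $p^*\dashv p_*$ and $p_*\dashv p^*$ ($p$ is étale, so $\omega_p$ is trivial). On $W$ this equals $|\Gal(p)|\cdot\id$, so $p_*|_W$ is injective. The main technical step is the descent of $i_{y*}$, $p_*$ and $p^*$ to numerical Grothendieck groups together with the flat-family constancy. I would handle both through the Euler-pairing adjunction identities above.
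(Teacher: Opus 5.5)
Your proposal is correct and follows essentially the same route as the paper: injectivity from $\chi_P(\pi_B^*u,i_{y*}v)=\chi_B(u,v)$ and nondegeneracy, independence of $y$ by constancy over the connected base $Y$, triviality of translations on $N(B)$ for the $\Gal(p)$-action, and $p^*p_*=\sum_\gamma\gamma_*=\abs{\Gal(p)}\cdot\id$ on $W$. You also spell out details the paper leaves implicit, namely the descent of $i_{y*}$, $p_*$, $p^*$ to numerical groups and the flat-family constancy. One small correction: well-definedness of $i_{y*}$ on $N(B)$ is the implication $v\in\rad\chi_B\Rightarrow i_{y*}v\in\rad\chi_P$, which follows from your identities with $i_y^*w$ and $i_y^!w$ for all $w\in K_0(P)$, and not from the converse implication you state.
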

\begin{proof}
For $U,V\in D^b(B)$, adjunction gives $\chi_P(\pi_B^*U,i_{y*}V)=\chi_B(U,V)$.
The nondegeneracy of the numerical Euler pairing proves the injectivity of $i_{y*}$.
The objects $i_{y*}V$ vary in an algebraic family over the connected variety $Y$, so their numerical classes are independent of $y$.

Every $\gamma\in\Gal(p)$ acts by a translation on $B$ and an automorphism on $Y$.
Translations act trivially on $N(B)$, and changing the fibre does not change its numerical class.
Thus, $\Gal(p)$ fixes $W$ pointwise.
For $w\in W$,
\[
p^*p_*w=\sum_{\gamma\in\Gal(p)}\gamma_*w=\abs{\Gal(p)}w.
\]
This proves the injectivity of $p_*|_W$.
\end{proof}

We can now express the entropy of an autoequivalence of $X$ in terms of an autoequivalence of the abelian factor of the product cover.

\begin{thm}\label{theorem: reduction of entropy to abelian factor}
Let $X$ be a smooth projective variety with finite Albanese morphism, and let $p:B\times Y\to X$ be a product cover as in \cref{theorem: Kawamata product cover}.
For every $\Phi\in\Auteq(X)$, there are an integer $k>0$ and an autoequivalence $\Psi\in\Auteq(B)$ such that
\[
k\hcat(\Phi)=\hcat(\Psi)=\log\rho(\numg{\Psi})=k\log\rho(\numg{\Phi}).
\]
\end{thm}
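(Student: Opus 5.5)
The plan is to combine the lifting result, the normal form, and the abelian extension, and then to close the chain of inequalities with a numerical argument on the subspace $W$ of \cref{lemma: descent of numerical fibre classes}.

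First, I apply \cref{proposition: lifting a power to the product cover} to the product cover $p$. This is allowed by the remark following that proposition. It gives $m>0$ and $\widetilde\Phi\in\Auteq(P)$ lifting $\Phi^m$, with $\hcat(\widetilde\Phi)=m\hcat(\Phi)$. Next, I apply \cref{proposition: relative normal form} to $\widetilde\Phi$ to obtain $r$, $\Psi$, $M$, $s$, $f$, $g$ and $\Phi_0$, and I set $k=mr$. Then \cref{proposition: entropy upper bound from abelian factor} and \cref{lemma: numerical spectral radius of the abelian extension}, together with \cref{theorem: Gromov Yomdin equality for abelian varieties}, give
\[
k\hcat(\Phi)=r\hcat(\widetilde\Phi)=\hcat(\Phi_0)\leq\hcat(\widehat\Phi_0)=\log\rho(\numg{\widehat\Phi_0})=\log\rho(\numg{\Psi})=\hcat(\Psi).
\]
In the middle equality I use \cref{theorem: Gromov Yomdin equality for abelian varieties} on $A'$.

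The lower bound comes from \cref{theorem: numerical lower bound for categorical entropy}, which gives $k\log\rho(\numg{\Phi})=\log\rho(\numg{(\Phi^k)})\leq\hcat(\Phi^k)=k\hcat(\Phi)$. It therefore suffices to prove $\log\rho(\numg{\Psi})\leq k\log\rho(\numg{\Phi})$. All quantities are then squeezed to equality, and $\hcat(\Psi)=\log\rho(\numg{\Psi})$ is just the abelian case.

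For this remaining inequality I use $W=i_{y*}N(B)_{\RB}$. The relation $\widetilde\Phi^r\circ i_{y_0*}\cong i_{y_0*}\circ\Psi$ from \cref{theorem: relative equivalences over general type factor} means that $W$ is $\numg{(\widetilde\Phi^r)}$-invariant. Since $i_{y_0*}$ is injective on $N(B)_{\RB}$, the restriction $\numg{(\widetilde\Phi^r)}|_W$ is conjugate to $\numg{\Psi}$. The lifting property $p_*\circ\widetilde\Phi\cong\Phi^m\circ p_*$ gives $p_*\numg{(\widetilde\Phi^r)}=\numg{(\Phi^{k})}p_*$ on $N(P)_{\RB}$. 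Because $p_*|_W$ is injective, $p_*W$ is a $\numg{(\Phi^k)}$-invariant subspace on which $\numg{(\Phi^k)}$ is conjugate to $\numg{\Psi}$. Hence every eigenvalue of $\numg{\Psi}$ is an eigenvalue of $\numg{(\Phi^k)}$, so $\rho(\numg{\Psi})\leq\rho(\numg{\Phi})^k$.

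The main obstacle is checking the invariance of $W$ under $\numg{(\widetilde\Phi^r)}$. The normal form is only an isomorphism of functors up to the standard pieces, so I would argue with the fibre relation directly rather than through the normal form. That relation gives $\numg{(\widetilde\Phi^r)}(i_{y_0*}v)=i_{y_0*}\numg{\Psi}(v)$ on classes of objects, hence on all of $N(B)_{\RB}$ by linearity. I also need the fact that numerical pushforward along the finite morphisms $i_{y_0}$ and $p$ is well defined. This follows from the adjunction compatibility of the Euler pairings, $\chi(p^*a,b)=\chi(a,p_*b)$, so that $p_*$ kills the radical.
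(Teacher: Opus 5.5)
Your proposal is correct and follows the paper's proof essentially step for step. You lift a power of $\Phi$ via \cref{proposition: lifting a power to the product cover}, obtain the upper bound $k\hcat(\Phi)\leq\log\rho(\numg{\Psi})$ from the normal form and the abelian extension, and close the chain using injectivity of $p_*\circ i_{y*}$ on $W=i_{y*}N(B)_{\RB}$.

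The only deviation is how you get $(\numg{\widetilde\Phi})^r\circ i_{y_0*}=i_{y_0*}\circ\numg{\Psi}$. You read it off from the fibre relation $\widetilde\Phi^r\circ i_{y_0*}\cong i_{y_0*}\circ\Psi$, which is how $\Psi$ is constructed in the proof of \cref{proposition: relative normal form} but is not part of its statement. The paper instead derives the same relation, up to the sign $(-1)^s$, from the normal form itself via the projection formula at an arbitrary $y$.
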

\begin{proof}
Put $P=B\times Y$.
By \cref{proposition: lifting a power to the product cover}, there are an integer $m>0$ and an autoequivalence $\widetilde\Phi$ of $P$ such that $\widetilde\Phi\circ p^*\cong p^*\circ\Phi^m$ and $p_*\circ\widetilde\Phi\cong\Phi^m\circ p_*$.
Choose $r>0$ and the normal form of $\widetilde\Phi^r$ given by \cref{proposition: relative normal form}, with the notation of \cref{setup: abelian extension of a relative autoequivalence}.
By \cref{lemma: categorical entropy of lifts,proposition: basic properties of categorical entropy,proposition: entropy upper bound from abelian factor}, we have
\[
\hcat(\widetilde\Phi^r)=mr\hcat(\Phi)\leq\hcat(\widehat\Phi_0).
\]
By \cref{theorem: Gromov Yomdin equality for abelian varieties,lemma: numerical spectral radius of the abelian extension}, we obtain
\begin{equation}
\label{equation: comparison of catentro of Phi and logrho Psi}
mr\hcat(\Phi)\leq\hcat(\widehat\Phi_0)
=\log\rho(\numg{\widehat\Phi_0})
=\log\rho(\numg{\Psi}).
\end{equation}

Fix $y\in Y$, and let $W=i_{y*}N(B)_{\RB}$ be the subspace in \cref{lemma: descent of numerical fibre classes}.
By \eqref{equation: relative normal form} and the projection formula, for every $E\in D^b(B)$ we have a natural isomorphism
\[
\widetilde\Phi^r(i_{y*}E)
\cong i_{y*}\bigl((t_{f(y)})_*(\Psi(E)\otimes P_{g(y)})\otimes_{\CB}M|_y\bigr)[s],
\]
where $M|_y$ is the one-dimensional fibre of $M$ at $y$.
Here we use $(\Psi\boxtimes\id_{D^b(Y)})\circ i_{y*}\cong i_{y*}\circ\Psi$, $\tau_f\circ i_y=i_y\circ t_{f(y)}$, and $i_y^*\PC_g\cong P_{g(y)}$.
Since translations and tensoring by algebraically trivial line bundles act trivially on numerical classes, it follows that
\[
(\numg{\widetilde\Phi})^r\circ i_{y*}
=(-1)^s i_{y*}\circ\numg{\Psi}.
\]
In particular, $(\numg{\widetilde\Phi})^r$ preserves $W$.
The relation $p_*\circ\widetilde\Phi^r\cong\Phi^{mr}\circ p_*$ gives
\[
(\numg{\Phi})^{mr}\circ p_*\circ i_{y*}
=(-1)^s p_*\circ i_{y*}\circ\numg{\Psi}.
\]
Since $p_*\circ i_{y*}$ is injective by \cref{lemma: descent of numerical fibre classes}, the restriction of $(\numg{\Phi})^{mr}$ to the invariant subspace $p_*W$ is conjugate to $(-1)^s\numg{\Psi}$, and $\rho(\numg{\Psi})\leq\rho((\numg{\Phi})^{mr})$.
Combining this with \cref{theorem: numerical lower bound for categorical entropy} and \eqref{equation: comparison of catentro of Phi and logrho Psi}, we obtain
\[
mr\hcat(\Phi)\leq\log\rho(\numg{\Psi})
\leq mr\log\rho(\numg{\Phi})\leq mr\hcat(\Phi).
\]
Therefore, \cref{theorem: Gromov Yomdin equality for abelian varieties} completes the proof.
\end{proof}

In particular, we have the following result:
\begin{cor}[$=$ \cref{theorem: Gromov Yomdin equality for finite Albanese morphisms}]\label{corollary: Gromov Yomdin equality for finite Albanese morphisms}
Let $X$ be a smooth projective variety with finite Albanese morphism.
For every $\Phi\in\Auteq(X)$ and every $\sigma\in\Stab_N(X)$,
\[
\hcat(\Phi)=h_\sigma(\Phi)=\log\rho(\numg{\Phi}).
\]
\end{cor}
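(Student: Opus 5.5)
The corollary follows almost formally from \cref{theorem: reduction of entropy to abelian factor} together with the general inequalities for mass growth. The plan is to first establish $\hcat(\Phi)=\log\rho(\numg{\Phi})$, and then squeeze $h_\sigma(\Phi)$ between the two.

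\textbf{Step 1: the equality $\hcat(\Phi)=\log\rho(\numg{\Phi})$.} Let $\Phi\in\Auteq(X)$. By \cref{theorem: Kawamata product cover}, since $a_X$ is finite, there is a product cover $p\colon B\times Y\to X$. By the remark following \cref{proposition: lifting a power to the product cover}, this cover satisfies
\[
\Ker(p^*)\subset\Pic^0(X).
\]
Hence \cref{theorem: reduction of entropy to abelian factor} applies and yields an integer $k>0$ and $\Psi\in\Auteq(B)$ with
\[
k\hcat(\Phi)=k\log\rho(\numg{\Phi}).
\]
Dividing by $k$ gives $\hcat(\Phi)=\log\rho(\numg{\Phi})$. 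Both sides are finite: $\hcat(\Phi)\ge 0$ by \cref{theorem: categorical entropy and total Hom comparison}, and $\hcat(\Phi)<\infty$.

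\textbf{Step 2: the mass growth.} Fix an arbitrary $\sigma\in\Stab_N(X)$. \cref{theorem: mass estimates and generator formula}(2),(3) gives
\[
\log\rho(\numg{\Phi})\le h_\sigma(\Phi)\le h_0(\Phi)=\hcat(\Phi).
\]
Combined with Step~1, the outer terms coincide, so all three quantities are equal. This inequality holds for every $\sigma\in\Stab_N(X)$, not only for those in $\MH_X$, so no mass--Hom hypothesis is needed here.

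\textbf{Main obstacle.} There is no real obstacle at this stage, since all the work is contained in \cref{theorem: reduction of entropy to abelian factor}. The only point requiring care is confirming that the hypothesis of \cref{proposition: lifting a power to the product cover} holds for Kawamata's cover. This is exactly the kernel computation $\Ker(p^*)=a_X^*\Ker(v^*)\subset\Pic^0(X)$ recorded after that proposition, using that the cover may be taken as a connected component of a base change of $a_X$ by an isogeny.
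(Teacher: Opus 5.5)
Your proposal is correct and matches the paper's route: you divide the identity $k\hcat(\Phi)=k\log\rho(\numg{\Phi})$ from \cref{theorem: reduction of entropy to abelian factor} by $k$, then squeeze $h_\sigma(\Phi)$ between $\log\rho(\numg{\Phi})$ and $\hcat(\Phi)$ using \cref{theorem: mass estimates and generator formula}, exactly as the paper does in the abelian case. Your separate check that $\Ker(p^*)\subset\Pic^0(X)$ is harmless but unnecessary, since that hypothesis is already verified inside the proof of the reduction theorem.
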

\section{Examples of autoequivalences with positive categorical entropy}\label{section: examples of positive categorical entropy}

\subsection{Preparations for constructing a functor}\label{subsection: Preparations for constructing a functor}
For an abelian variety $A$, let $\PC_A$ be the normalized Poincar\'e line bundle on $A\times\widehat A$, and write
\[
\rderived S_A(-)=\rderived\pi_{2*}(\PC_A\otimes\pi_1^*(-)) \colon D^b(A)\to D^b(\widehat A)
\]
for the Poincar\'e Fourier--Mukai equivalence, where $\pi_1$ and $\pi_2$ are the projections.

First, we review some results and fix notation for the case of principally polarized abelian varieties.

\begin{setup}
\label{setup: pri pol abelian case}
Let $A_0$ be an abelian variety of dimension $g>0$, and let $L_0$ be an ample line bundle defining a principal polarization.
Let $\phi_{L_0}:A_0\xrightarrow{\cong}\widehat A_0$ be the polarization isomorphism $x\mapsto t_x^*L_0\otimes L_0^{-1}$.
Put $\theta=c_1(L_0)$ and $\Phi\coloneqq(-\phi_{L_0})^*\circ\rderived S_{A_0}$.

Put $V_0=\langle1,\theta,\ldots,\theta^g\rangle_{\RB}\subset H^*(A_0,\RB)$.
The cohomological actions of $\tensfunc_{L_0}$ and $\Phi$ preserve $V_0$ and determine a representation
\(
\rho:\SL_2(\ZZ)\to\GL(V_0)
\)
by \cite[(2.2)-(2.4) and Theorem 4.2]{beauville_2010_the_action_of_rm_sl2_on_abelian_varieties}.
Here, $\rho(u)=(\tensfunc_{L_0})^H|_{V_0}$ and $\rho(w)=\Phi^H|_{V_0}$, where
\begin{equation*}
u=\begin{pmatrix}1&1\\0&1\end{pmatrix},\quad
w=\begin{pmatrix}0&-1\\1&0\end{pmatrix}.
\end{equation*}
Applying \cite[Proposition 5.2]{beauville_2010_the_action_of_rm_sl2_on_abelian_varieties} to the unit class and passing to cohomology, we obtain an isomorphism between the representation $\rho$ and the \(g\)-th symmetric power of the standard representation of $\SL_2(\ZZ)$ on $\RB^2$.

Note that in this setting, $V_0$ can be identified with the subspace of $N(A_0)_{\RB}$ spanned by $[L_0^{\otimes j}]$ for $0\leq j\leq g$.
Indeed, the classes $\ch(L_0^{\otimes j})=e^{j\theta}$ for $0\leq j\leq g$ form a basis of $V_0$, and the Euler pairing on $V_0$ is anti-diagonal with nonzero entries in the basis $1,\theta,\ldots,\theta^g$, that is, it is nondegenerate.
Under this identification, the numerical actions of $\tensfunc_{L_0}$ and $\Phi$ agree with their cohomological actions.

Since the cohomological actions depend only on \(\theta\), Beauville’s description obtained from a symmetric representative applies to the given \(L_0\).
\end{setup}

Next, we fix notation for the general case.
\begin{setup}
\label{setup: general abelian variety}
We keep the notation of \cref{setup: pri pol abelian case}.
Let $q:A\to A_0$ be an isogeny of degree $d$, and put $L=q^*L_0$.
Then $L$ is ample and $\chi(L)=d\chi(L_0)=d$ by \cite[Corollary 3.6.6]{book_birkenhake_lange_2004_complex_abelian_varieties}.
Since $L$ is ample, cohomology and base change show that $\rderived S_A(L)$ is a vector bundle of rank $\chi(L)=d$ (\cite[Lemma 14.2.1 and Proposition 14.7.7]{book_birkenhake_lange_2004_complex_abelian_varieties}).

Let $N\coloneqq \det(\rderived S_A(L))^{-1}$ and $\widehat q:\widehat A_0\to\widehat A$ be the dual isogeny.
We claim that 
\begin{equation}\label{equation: determinant of Fourier transform under a principal isogeny}
c_1((-\phi_{L_0})^*\widehat q^*N)=d\theta.
\end{equation}
As $\phi_L^*\rderived S_A(L)\cong H^0(A,L)\otimes L^{-1}$ by \cite[Corollary 14.3.6(a)]{book_birkenhake_lange_2004_complex_abelian_varieties}, taking determinants gives $c_1(\phi_L^*N)=d\,c_1(L)$.
The morphism $\widehat q$ satisfies $\phi_L=\widehat q\circ\phi_{L_0}\circ q$ by \cite[Corollary 2.4.6(d)]{book_birkenhake_lange_2004_complex_abelian_varieties}.
Since $(-\id_A)^*$ acts trivially on $H^2(A,\RB)$, we obtain $q^*c_1((-\phi_{L_0})^*\widehat q^*N)=d\,q^*\theta$.
Moreover, $q^*$ is injective both on cohomology and on $N(A_0)_\RB$.
Therefore, we have the equality \eqref{equation: determinant of Fourier transform under a principal isogeny}.

\end{setup}

Fix a finite subgroup $H\subset A$ and let $L_{A/H}$ be an ample line bundle on $A/H$. 
Its polarization is induced by a principal polarization via an isogeny $A/H\to A_0$ by \cite[Proposition 4.1.2]{book_birkenhake_lange_2004_complex_abelian_varieties}.
Consider the composition $q\colon A\to A/H \to A_0$, which is an isogeny with $H\subset\Ker q$.

Finally, we recall the notion of a \emph{linearization}, which follows the notation in \cite{krug_sosna_2015_equivalences_of_equivariant_derived_categories}.

\begin{rem}
\label{remark: linearizations}
Let a finite group $H$ act on a smooth projective variety $X$ by automorphisms $\lambda_h:X\to X$, with $\lambda_{hk}=\lambda_h\circ\lambda_k$ for $h, k\in H$.
An \emph{$H$-linearization} of $E\in D^b(X)$ is a collection of isomorphisms 
\[
\{\ell_h:E\xrightarrow{\cong}\lambda_h^*E \mid \ell_1=\id_E \text{ and } \ell_{hk}=\lambda_k^*(\ell_h)\circ\ell_k \text{ for } h, k\in H\}.
\]
We use the following two constructions.
\begin{enumerate}
    \item\label{item: linearization hom case} Suppose that $f:X\to T$ is a morphism to a smooth projective variety $T$ such that $f\circ\lambda_h=f$ for every $h\in H$. 
    For $F\in D^b(T)$, the canonical isomorphisms
    \[
        f^*F=(f\circ\lambda_h)^*F\xrightarrow{\sim}\lambda_h^*f^*F
    \] 
    satisfy this condition by functoriality and give $f^*F$ a canonical $H$-linearization.
    \item\label{item: invariant closed subscheme case} If $Z\subset X$ is an $H$-invariant closed subscheme, the isomorphisms $\OO_Z\xrightarrow{\cong}\lambda_h^*\OO_Z$ induced from the structure sheaf $\OO_X$ and the invariance of the ideal sheaf of $Z$ give its canonical linearization. 
\end{enumerate}
Note that tensor products and external products inherit linearizations by tensoring the corresponding isomorphisms.
\end{rem}
\subsection{Construction and proof}\label{subsection: construction and proof}
\begin{prop}\label{proposition: positive categorical entropy on abelian varieties}
Every abelian variety $A$ admits an autoequivalence of positive categorical entropy.

Moreover, for any finite subgroup $H\subset A$, such an autoequivalence can be chosen so that its Fourier--Mukai kernel admits an $H$-linearization for the diagonal action defined by $\lambda_h = t_h\times t_h$ on $A\times A$.
\end{prop}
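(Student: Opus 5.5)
Given a finite subgroup $H\subset A$, we take the isogeny $q\colon A\to A/H\to A_0$ and the line bundle $L=q^*L_0$ of \cref{setup: general abelian variety}, and we use \cref{theorem: Gromov Yomdin equality for abelian varieties}: it suffices to produce $\Xi\in\Auteq(A)$ with $\rho(\numg{\Xi})>1$. The natural candidate is
\[
\Xi=\tensfunc_L\circ\Theta,
\]
where $\Theta$ is a "Fourier-type" autoequivalence of $A$ that induces (a multiple of) $w$ on the relevant numerical subspace. Then $\tensfunc_L\circ\Theta$ acts like $uw$ or a hyperbolic element, for instance $\begin{pmatrix}1&-1\\1&0\end{pmatrix}$ composed twice with $u$, i.e.\ $u^2w$ with trace $2$. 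To ensure hyperbolicity we take $\Xi=\tensfunc_{L}^{\,a}\circ\Theta$ with $a\geq3$, which corresponds to a matrix of trace $\pm a$.

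\emph{Step 1 (construction of $\Theta$).} Since $A$ need not be principally polarized, we do not use $\rderived S_A$ directly. Instead we take the equivalence
\[
\Theta=(-\phi_{L_0}\circ q)^{*}\text{-type transport of }\rderived S_A,
\]
or, more robustly, the composite of $\rderived S_A$ with an isomorphism $\widehat A\cong A$. Such an isomorphism exists only in special cases, so the fallback is to work with $A\times\widehat A$-free constructions: the functor $\Theta(E)=\rderived S_{A}(E)$ pulled back along $\widehat q\circ(-\phi_{L_0})\circ q\colon A\to\widehat A$ is not an equivalence. Hence the more reliable route is an Orlov-type kernel $\mathcal E=$ a sheaf on $A\times A$ built from $m^*L\otimes p_1^*L^{-1}\otimes p_2^*L^{-1}$ (the Mumford bundle) twisted by $N$. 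Concretely, we compute numerically on the subspace spanned by $[L^{\otimes j}]$ and check via $q^*$ and \eqref{equation: determinant of Fourier transform under a principal isogeny} that this subspace pulls back from $V_0$. There the action is $\mathrm{Sym}^g$ of a $2\times2$ integer matrix $\begin{pmatrix}1&d\\-1&0\end{pmatrix}$-type element. The key role of \eqref{equation: determinant of Fourier transform under a principal isogeny} is to show that the line bundle $N$ produced by the Fourier transform has class $d\theta$, so the resulting matrix is integral with a controlled entry.

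\emph{Step 2 (spectral radius).} Composing with $\tensfunc_L^{a}$ gives a $2\times2$ matrix of trace $\geq3$ in absolute value, hence with an eigenvalue $\lambda>1$. Its $g$-th symmetric power has eigenvalue $\lambda^g>1$ on an invariant subspace of $N(A)_\RB$ (via injectivity of $q^*$). Then $\hcat(\Xi)\geq\log\lambda^g>0$ by \cref{theorem: numerical lower bound for categorical entropy}; equality holds by \cref{theorem: Gromov Yomdin equality for abelian varieties}.

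\emph{Step 3 ($H$-linearization).} We make all ingredients pullbacks along $A\times A\to(A/H)\times(A/H)$ where possible. The kernel of $\tensfunc_L$ is $\Delta_*L$, and $L=q^*L_0$ is pulled back from $A_0$ with $H\subset\Ker q$, so \cref{remark: linearizations} (1) and (2) give a canonical diagonal linearization. For $\Theta$ we choose the kernel to be $(\mathrm{id}\times\alpha)^*$ of a Poincaré/Mumford bundle where the relevant map factors through the difference map $A\times A\to A$, $(x,y)\mapsto x-y$, or through $q\times q$. Diagonal translation-invariance then follows from (1). Tensor and convolution products preserve linearizations. The main obstacle is exactly this step: arranging a genuine equivalence $\Theta$ of $D^b(A)$ whose kernel is diagonally $H$-invariant while still realizing $w$ numerically. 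I would first try the kernel $\mu^*\mathcal M$ with $\mu(x,y)=x-y$ (or $x+y$ composed with $-\id$) and $\mathcal M$ a suitable sheaf on $A$ coming from $A_0$, verifying the equivalence via Orlov's criterion for semihomogeneous kernels. Failing that, I would compose $\rderived S_A$ with the dual equivalence in the opposite direction, i.e.\ use $\rderived S_{\widehat A}\circ\tensfunc_{N}\circ\rderived S_A$. This is an autoequivalence of $A$ whose kernel is automatically invariant under diagonal translations up to $\Pic^0$-twists, which we would then kill using $H\subset\Ker q$.
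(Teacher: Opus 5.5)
Your reduction to finding $\Xi$ with $\rho(\numg{\Xi})>1$ is fine, and so is your handling of the $\tensfunc_L$ factor via $L=q^*L_0$. But there is a genuine gap. The proposal never fixes the autoequivalence, and the property your Step~2 rests on fails in exactly the cases the statement is about. That property is the existence of a Fourier-type $\Theta\in\Auteq(A)$ acting as (a multiple of) $w$ on $V=q^*V_0=\langle[L^{\otimes j}]\rangle_{\RB}$.

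Because $H\subset\Ker q$, one has $d=\deg q\geq\abs{H}$, so $d>1$ as soon as $H\neq0$ (or $A$ has no principal polarization). Since $\int_Ac_1(L)^g=d\,g!$, the skyscraper class is $[\OO_x]=c_1(L)^g/(d\,g!)\in V$. An autoequivalence acting on $V$ as $q^*\rho(w)(q^*)^{-1}$ would send $c_1(L)^g/g!$ to $\pm1$, hence $[\OO_x]$ to a class of rank $\pm1/d$, which is absurd.

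Rescaling does not help. Via $q^*$, the Euler pairing on $V$ is a multiple of the $\SL_2$-invariant form on $\mathrm{Sym}^g\RB^2$, so an action of the form $\mathrm{Sym}^g(M)$ by an autoequivalence forces $\det(M)^g=1$. In particular your matrix $\begin{pmatrix}1&d\\-1&0\end{pmatrix}$, of determinant $d$, is not induced by any autoequivalence.

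Your other candidates fail too. Kernels built from the Mumford bundle realize $\phi_L^*\circ\rderived S_A$ up to tensor functors, which is not an equivalence because $\deg\phi_L=d^2>1$. Even when $A$ carries a principal polarization $L_0$, the functor $(-\phi_{L_0})^*\circ\rderived S_A$ converts $t_{h*}$ into tensoring by a nontrivial element of $\Pic^0(A)$. So it does not commute with $t_{h*}$, and its kernel is not $(t_h\times t_h)$-invariant for $h\neq0$. Your trace-$a$ computation and your Step~3 therefore both concern a functor that is not available.

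The missing idea is that no $w$ is needed on $A$: a second unipotent suffices. One is obtained by conjugating a tensor functor on $\widehat A$ by $\rderived S_A$. This is an autoequivalence of $A$ by construction, and it needs neither an identification $A\cong\widehat A$ nor any criterion for semihomogeneous kernels.

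The paper takes $\Psi=\tensfunc_L\circ(\rderived S_A)^{-1}\circ\tensfunc_{N^{-1}}\circ\rderived S_A$ with $N=\det(\rderived S_A(L))^{-1}$. Using $\rderived S_A\circ q^*\cong\widehat q_*\circ\rderived S_{A_0}$ and the projection formula, one gets $\Psi\circ q^*\cong q^*\circ\tensfunc_{L_0}\circ\Phi^{-1}\circ\tensfunc_{(-\phi_{L_0})^*\widehat q^*N^{-1}}\circ\Phi$, where $\Phi=(-\phi_{L_0})^*\circ\rderived S_{A_0}$. The identity \eqref{equation: determinant of Fourier transform under a principal isogeny} is exactly what lets you replace the middle twist by $\tensfunc_{L_0^{-d}}$ numerically. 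Hence $\numg{\Psi}\circ q^*=q^*\circ\numg{\Psi_0(d)}$, where $\Psi_0(d)$ acts on $V_0$ through $uw^{-1}u^{-d}w=\begin{pmatrix}d+1&1\\d&1\end{pmatrix}$, of trace $d+2>2$. Injectivity of $q^*$ and \cref{theorem: numerical lower bound for categorical entropy} then give $\hcat(\Psi)>0$.

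For the linearization, flat base change along $d_A\colon A\times A\to A$, $(x,y)\mapsto x-y$, identifies the kernel of $\Psi$ as $\pi_2^*L\otimes d_A^*Q$ with $Q=\rderived S_{\widehat A}(N^{-1})[g]$. This equals $f^*(L_0\boxtimes Q)$ for $f(x,y)=(q(y),x-y)$. Since $f\circ(t_h\times t_h)=f$ for $h\in\Ker q$, \cref{remark: linearizations}\,(1) applies.

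Two of your candidates come close. Your $\mu^*\mathcal M$ with $\mu(x,y)=x-y$ is exactly the middle factor (with $\mathcal M=Q$), but it acts through the unipotent $w^{-1}u^{-d}w=\begin{pmatrix}1&0\\d&1\end{pmatrix}$, not through $w$. Your last fallback $\rderived S_{\widehat A}\circ\tensfunc_N\circ\rderived S_A$ has kernel pulled back along $(x,y)\mapsto x+y$, which is invariant under $t_h\times t_{-h}$ rather than $t_h\times t_h$. It is the quasi-inverse $(\rderived S_A)^{-1}$, which differs from $\rderived S_{\widehat A}$ by $(-1)^*$ and a shift, that produces $x-y$.
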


\begin{proof}
\noindent\textbf{Step 1: Construction on a principally polarized abelian variety.}
Let $A_0$, $L_0$, and $\Phi$ be as in \cref{setup: pri pol abelian case}.
For each integer $k>0$, define
\begin{equation*}
\Psi_0(k)=\tensfunc_{L_0}\circ\Phi^{-1}\circ\tensfunc_{L_0^{-k}}\circ\Phi.
\end{equation*}
The restriction of $\Psi_0(k)^H$ to $V_0$ acts as $\rho(uw^{-1}u^{-k}w)$, 
where
\[
uw^{-1}u^{-k}w=\begin{pmatrix}k+1&1\\k&1\end{pmatrix}.
\]
This matrix has eigenvalues $\lambda_k\coloneqq (k+2+\sqrt{k^2+4k})/2>1$ and $\lambda_k^{-1}$.
Since $\rho$ is isomorphic to the $g$-th symmetric power of the standard representation, the eigenvalues of $\Psi_0(k)^H|_{V_0}$ are $\lambda_k^{g-2j}$ for $0\leq j\leq g$.
In particular, $\lambda_k^g$ is an eigenvalue of $\numg{\Psi_0(k)}$, and \cref{theorem: numerical lower bound for categorical entropy} gives $\hcat(\Psi_0(k))\geq g\log\lambda_k>0$.

\noindent\textbf{Step 2: Construction on an arbitrary abelian variety.}
Choose a principally polarized abelian variety $(A_0,L_0)$ and an isogeny $q:A\to A_0$ with $H\subset\Ker q$, as above.
Retain the notation $L,N,d$, and $\Phi$ of \cref{setup: general abelian variety}.
Define
\[
\Psi\coloneqq \tensfunc_L\circ(\rderived S_A)^{-1}\circ\tensfunc_{N^{-1}}\circ\rderived S_A.
\]
Since it holds
\[
\rderived S_A\circ q^*\cong\widehat q_*\circ\rderived S_{A_0}
\]
by \cite[Proposition 14.7.9(b)]{book_birkenhake_lange_2004_complex_abelian_varieties} and 
the projection formula $\tensfunc_{N^{-1}}\circ\widehat q_*\cong\widehat q_*\circ\tensfunc_{\widehat q^*N^{-1}}$, we obtain the following isomorphisms of functors:
\begin{align*}
\Psi\circ q^*
&\cong q^*\circ\tensfunc_{L_0}\circ(\rderived S_{A_0})^{-1}\circ\tensfunc_{\widehat q^*N^{-1}}\circ\rderived S_{A_0}\\
&\cong q^*\circ\tensfunc_{L_0}\circ\Phi^{-1}\circ\tensfunc_{(-\phi_{L_0})^*\widehat q^*N^{-1}}\circ\Phi.
\end{align*}
For the second isomorphism, we used
\begin{align*}
(\rderived S_{A_0})^{-1}\circ\tensfunc_{\widehat q^*N^{-1}}\circ\rderived S_{A_0}
&\cong \Phi^{-1}\circ(-\phi_{L_0})^*\circ \tensfunc_{\widehat q^*N^{-1}}\circ \rderived S_{A_0}\\
&\cong \Phi^{-1}\circ\tensfunc_{(-\phi_{L_0})^*\widehat q^*N^{-1}}\circ\Phi.
\end{align*}
By \eqref{equation: determinant of Fourier transform under a principal isogeny}, the line bundles $(-\phi_{L_0})^*\widehat q^*N^{-1}$ and $L_0^{-d}$ have the same first Chern class, and hence their tensor functors have the same numerical action.
Therefore, we have the following equality of numerical actions:
\[
\numg{\Psi}\circ q^*=q^*\circ\numg{\Psi_0(d)}.
\]
Since $q^*$ is injective on numerical classes, $\lambda_d^g$ is an eigenvalue of $\numg{\Psi}$.
Thus, \cref{theorem: numerical lower bound for categorical entropy} gives $\hcat(\Psi)\geq g\log\lambda_d>0$.

\noindent\textbf{Step 3: Linearization of the Fourier--Mukai kernel.}
First, we determine the Fourier--Mukai kernel of $\Psi$.
Let $d_A:A\times A\to A$ be a morphism given by $d_A(x,y)=x-y$, and put $Q=\rderived S_{\widehat A}(N^{-1})[g]$.
By \cite[Corollary 14.7.3]{book_birkenhake_lange_2004_complex_abelian_varieties}, the quasi-inverse of $\rderived S_A$ has kernel $\PC_{\widehat A}^{-1}[g]$.
On $A\times\widehat A\times A$, let $r_{ij}$ and $r_2$ denote the projections, and let $p_1,p_2$ be the projections of $\widehat A\times A$.
Define $\nu:A\times\widehat A\times A\to\widehat A\times A$ by $\nu(x,\xi,y)=(\xi,x-y)$.
By \cite[the proof of Corollary 14.1.8]{book_birkenhake_lange_2004_complex_abelian_varieties}, under the canonical identification $\widehat{\widehat A}=A$, we have
\[
r_{12}^*\PC_A\otimes r_{23}^*\PC_{\widehat A}^{-1}\cong\nu^*\PC_{\widehat A}.
\]
Consider the following Cartesian square:
\[\begin{tikzcd}
A\times \widehat A\times A \arrow[r, "\nu"] \arrow[d, "r_{13}"']
    & \widehat A\times A \arrow[d, "p_2"]\\
A\times A \arrow[r, "d_A"']
    & A\rlap{.} \arrow[phantom]{ul}[very near end]{\lrcorner}
\end{tikzcd}\]
Since $r_2=p_1\circ\nu$ and $d_A$ is smooth, flat base change gives the following expression for the kernel of $(\rderived S_A)^{-1}\circ\tensfunc_{N^{-1}}\circ\rderived S_A$:
\begin{align*}
&\rderived r_{13*}\bigl(r_{12}^*\PC_A\otimes r_2^*N^{-1}\otimes r_{23}^*\PC_{\widehat A}^{-1}\bigr)[g]\\
&\cong d_A^*\rderived p_{2*}\bigl(\PC_{\widehat A}\otimes p_1^*N^{-1}\bigr)[g]\\
&=d_A^*Q.
\end{align*}
By the projection formula, composing on the left with $\tensfunc_L$ gives the kernel $K=\pi_2^*L\otimes d_A^*Q$ for $\Psi$, where $\pi_2:A\times A\to A$ is the second projection.

Consider the morphism $f:A\times A\to A_0\times A$ given by $f(x,y)=(q(y),x-y)$.
Since $L=q^*L_0$, we have $K\cong f^*(L_0\boxtimes Q)$.
Since $H\subset\Ker q$, $f\circ(t_h\times t_h)=f$ for every $h\in H$.
Therefore, \eqref{item: linearization hom case} of \cref{remark: linearizations} gives a linearization of $K$ for the diagonal action $t_h\times t_h$. 
This proves the assertion.
\end{proof}

\begin{thm}\label{theorem: positive categorical entropy for finite Albanese morphisms}
Let $X$ be a smooth projective variety whose Albanese morphism $a_X:X\to\Alb(X)$ is finite.
There exists an autoequivalence $\Phi\in\Auteq(X)$ with $\hcat(\Phi)>0$ if and only if $\kappa(X)<\dim X$.
\end{thm}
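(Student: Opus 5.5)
The proof splits into two directions: the "only if" direction uses the entropy reduction of \cref{theorem: reduction of entropy to abelian factor}, and the "if" direction descends the construction of \cref{proposition: positive categorical entropy on abelian varieties} along the Kawamata cover.

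\emph{Only if.} Suppose $\kappa(X)=\dim X$. Then in \cref{theorem: Kawamata product cover} we have $\dim B=0$, so $B$ is a point and $P=Y$. For any $\Phi\in\Auteq(X)$, \cref{theorem: reduction of entropy to abelian factor} gives $k>0$ and $\Psi\in\Auteq(B)$ with $k\hcat(\Phi)=\hcat(\Psi)$. Since $D^b(\mathrm{pt})$ is the derived category of vector spaces, $\Psi$ is a shift, so $\hcat(\Psi)=0$. Hence $\hcat(\Phi)=0$.

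\emph{If.} Suppose $\kappa(X)<\dim X$, so that $g=\dim B>0$. Write $p:P=B\times Y\to X$ for the cover of \cref{theorem: Kawamata product cover}, and $\Gamma=\Gal(p)$. The group $\Gamma$ is a finite abelian group acting diagonally, by translations $t_{\beta(\gamma)}$ on $B$ and by automorphisms $\gamma_Y$ on $Y$. Put $H=\beta(\Gamma)\subset B$. Apply \cref{proposition: positive categorical entropy on abelian varieties} to $(B,H)$ to obtain $\Psi\in\Auteq(B)$ with $\hcat(\Psi)>0$ whose kernel $K$ carries an $H$-linearization for the diagonal action $t_h\times t_h$. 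In fact $K=f^*(L_0\boxtimes Q)$ with $f\circ(t_h\times t_h)=f$, so the linearization is canonical.

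On $P\times P$, take the kernel $\widetilde K=K\boxtimes\OO_{\Delta_Y}$ of $\Psi\boxtimes\id_{D^b(Y)}$. Under the diagonal $\Gamma$-action on $P\times P$, the factor $\OO_{\Delta_Y}$ is linearized by \eqref{item: invariant closed subscheme case} of \cref{remark: linearizations}. The factor $K$ is linearized via pullback along $\beta$, using the canonical structure from \eqref{item: linearization hom case}. The external product therefore has a $\Gamma$-linearization for the diagonal action on $P\times P$. By the Ploog / Krug--Sosna descent for equivariant kernels, $\Psi\boxtimes\id$ then induces an autoequivalence of $D^b_\Gamma(P)\cong D^b(X)$. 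Concretely, there is $\Phi\in\Auteq(X)$ with $p^*\circ\Phi\cong\widetilde\Phi\circ p^*$ and $p_*\circ\widetilde\Phi\cong\Phi\circ p_*$, where $\widetilde\Phi=\Psi\boxtimes\id$. Invertibility follows because the inverse kernel is linearized compatibly, or alternatively because an equivariant functor whose underlying functor is an equivalence is an equivalence.

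Finally, \cref{lemma: categorical entropy of lifts} and \cref{lemma: entropy of external product with identity} give
\[
\hcat(\Phi)=\hcat(\Psi\boxtimes\id_{D^b(Y)})=\hcat(\Psi)>0.
\]

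The main obstacle is the descent step. Three things must be checked: that the linearization genuinely uses the diagonal action on both factors $B$ and $Y$ simultaneously (this needs $\Gamma$ abelian and $H=\beta(\Gamma)$, with the cocycle condition inherited from the canonical linearizations); that the descended functor is an equivalence; and that the two intertwining isomorphisms hold, so that \cref{lemma: categorical entropy of lifts} applies. I would handle these by passing through the equivariant category, using the canonical identification $D^b_\Gamma(P)\simeq D^b(X)$ for a free action.
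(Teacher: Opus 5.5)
Your proof is correct. The sufficiency direction is essentially the paper's own argument. The paper also takes $H$ to be the translation parts of $\Gal(p)$, uses the $H$-linearized kernel from \cref{proposition: positive categorical entropy on abelian varieties}, and linearizes $K\boxtimes\OO_{\Delta_Y}$ diagonally. It then descends via Krug--Sosna and concludes with \cref{lemma: categorical entropy of lifts,lemma: entropy of external product with identity}.

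Necessity is where your route differs. The paper argues directly:
\begin{itemize}
\item a variety of general type with finite Albanese morphism contains no rational curves, so $\omega_X$ is ample;
\item by Bondal--Orlov every autoequivalence is standard;
\item since $\Aut(X)$ is finite, Kikuta--Takahashi gives $\hcat(\Phi)=0$.
\end{itemize}

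You instead apply \cref{theorem: reduction of entropy to abelian factor} in the degenerate case where $B$ is a point. There $\Psi\in\Auteq(B)$ is a shift, so $k\hcat(\Phi)=\hcat(\Psi)=0$. This is legitimate, because neither the statement nor the proof of the reduction theorem uses $\dim B>0$. In that case the normal form collapses to $\widetilde\Phi^r\cong\tensfunc_{M}[s]$, and $\widehat\Phi_0$ is a shift on $D^b(\Alb(Y))$. You should nevertheless say explicitly that you have checked this degenerate case, rather than invoking the theorem as a black box.

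Your route keeps necessity inside the paper's own framework and avoids the external input that $\omega_X$ is ample. The cost is that it rests on the whole lifting, Caucci--Lombardi and normal-form machinery of \cref{section: finite Albanese morphisms}. The paper's route is short and classical, and it yields the stronger structural fact that every autoequivalence of such an $X$ is standard.
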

\begin{proof}
Let $X$ be a variety of general type whose Albanese morphism is finite.
As $X$ contains no rational curve, the canonical sheaf $\omega_X$ is ample (see \cite[7.13 Exercises 8]{book_debarre_2001_higherdimensional_algebraic_geometry} or \cite[Lemma 2.1]{diverio_trapani_2019_quasinegative_holomorphic_sectional_curvature_and_positivity_of_the_canonical_bundle}).
Thus, every autoequivalence of $D^b(X)$ is standard \cite{bondal_orlov_2001_reconstruction_of_a_variety_from_the_derived_category_and_groups_of_autoequivalences} and has zero categorical entropy by the finiteness of $\Aut(X)$ and \cite{kikuta_takahashi_2019_on_the_categorical_entropy_and_the_topological_entropy}.
This shows the necessity.

Suppose that $\kappa(X)<\dim X$.
By \cref{theorem: Kawamata product cover}, there is a finite \'etale Galois cover $p:P=B\times Y\to X$, where $B$ is abelian, $Y$ is of general type, and $\Gal(p)$ acts diagonally by translations on $B$.
In particular, $\dim B=\dim X-\kappa(X)>0$.
Write the action of $\gamma\in\Gal(p)$ as $\gamma(b,y)=(b+a_\gamma,\gamma_Y(y))$, and put $H=\{a_\gamma\mid\gamma\in\Gal(p)\}\subset B$.
By \cref{proposition: positive categorical entropy on abelian varieties}, there is an autoequivalence $\Psi\in\Auteq(B)$ with $\hcat(\Psi)>0$ whose Fourier--Mukai kernel $K$ has a diagonal $H$-linearization.

Consider $\widetilde\Phi=\Psi\boxtimes\id_{D^b(Y)}\in\Auteq(P)$.
Its Fourier--Mukai kernel is $K\boxtimes\OO_{\Delta_Y}$, after reordering the factors.
Since $\gamma_Y\times\gamma_Y$ preserves $\Delta_Y$ as a closed subscheme, $\OO_{\Delta_Y}$ has its canonical $\Gal(p)$-linearization (see \eqref{item: invariant closed subscheme case} of \cref{remark: linearizations}).
Together with the linearization on $K$ induced by $\gamma\mapsto a_\gamma$, this gives $K\boxtimes\OO_{\Delta_Y}$ a diagonal $\Gal(p)$-linearization.
By \cite[Proposition 4.2]{krug_sosna_2015_equivalences_of_equivariant_derived_categories}, $\widetilde\Phi$ descends to an autoequivalence $\Phi\in\Auteq(X)$ satisfying $\widetilde\Phi\circ p^*\cong p^*\circ\Phi$ and $p_*\circ\widetilde\Phi\cong\Phi\circ p_*$.
Hence \cref{lemma: categorical entropy of lifts,lemma: entropy of external product with identity} give
\[
\hcat(\Phi)=\hcat(\widetilde\Phi)=\hcat(\Psi)>0.\qedhere
\]
\end{proof}

\bibliographystyle{amsalpha}
\bibliography{converted_bibtex_tyoshida}

\end{document}